\documentclass[12pt,a4paper,reqno]{amsart}
\usepackage[utf8]{inputenc}
\usepackage[T1]{fontenc}
\usepackage{amsmath}
\usepackage{amsthm}
\usepackage{amssymb}
\usepackage[abbrev,lite]{amsrefs}
\usepackage{mathrsfs}
\usepackage[dvipsnames]{xcolor}
\usepackage{bm}
\usepackage{enumitem}
\usepackage{hyperref}
\usepackage{graphicx}
\DefineSimpleKey{bib}{myurl}
\newcommand\myurl[1]{\url{#1}}
\BibSpec{webpage}{%
+{}{\PrintAuthors} {author}
+{,}{ \textit} {title}
+{}{ \parenthesize} {date}
+{,}{ \myurl} {myurl}
+{,}{ } {note}
+{.}{ } {transition}
}
\AtBeginDocument{\def\MR#1{}}
\makeatletter
\@namedef{subjclassname@2020}{%
\textup{2020} Mathematics Subject Classification}
\makeatother
\numberwithin{equation}{section}
\allowdisplaybreaks
\newtheorem{thm}{}[section]
\newtheorem{theorem}[thm]{Theorem}
\newtheorem{corollary}[thm]{Corollary}
\newtheorem{lemma}[thm]{Lemma}
\newtheorem{proposition}[thm]{Proposition}
\theoremstyle{definition}

\newtheorem{definition}[thm]{Definition}
\newtheorem{question}[thm]{Question}
\newcommand{\Id}{\ensuremath{\mathrm{Id}}}
\newcommand{\blambda}{\ensuremath{\bm{\lambda}}}
\newcommand{\ee}{\ensuremath{\bm{e}}}

\newcommand{\xx}{\ensuremath{\bm{x}}}
\newcommand{\yy}{\ensuremath{\bm{y}}}

\newcommand{\UU}{\ensuremath{\mathbb{U}}}
\newcommand{\YY}{\ensuremath{\mathbb{Y}}}
\newcommand{\XX}{\ensuremath{\mathbb{X}}}
\newcommand{\VV}{\ensuremath{\mathbb{V}}}

\newcommand{\RR}{\ensuremath{\mathbb{R}}}
\newcommand{\FF}{\ensuremath{\mathbb{F}}}
\newcommand{\NN}{\ensuremath{\mathbb{N}}}
\newcommand{\ZZ}{\ensuremath{\mathbb{Z}}}
\newcommand{\MM}{\ensuremath{\mathbb{M}}}
\newcommand{\JJ}{\ensuremath{\mathbb{J}}}
\newcommand{\Sym}{\ensuremath{\mathbb{S}}}
\newcommand{\HH}{\ensuremath{\mathbb{H}}}
\newcommand{\GG}{\ensuremath{\mathbb{G}}}

\newcommand{\UB}{\ensuremath{\mathcal{U}}}
\newcommand{\VB}{\ensuremath{\mathcal{V}}}

\newcommand{\Mt}{\ensuremath{\mathcal{M}}}
\newcommand{\Nt}{\ensuremath{\mathcal{N}}}
\newcommand{\Dt}{\ensuremath{\mathcal{D}}}
\newcommand{\Ht}{\ensuremath{\mathcal{H}}}
\newcommand{\Et}{\ensuremath{\mathcal{E}}}
\newcommand{\XB}{\ensuremath{\mathcal{X}}}
\newcommand{\YB}{\ensuremath{\mathcal{Y}}}

\newcommand{\Jt}{\ensuremath{\mathcal{J}}}
\newcommand{\Kt}{\ensuremath{\mathcal{K}}}

\newcommand{\LL}{\ensuremath{\bm{L}}}
\newcommand{\sue}{\ensuremath{\bm{\sigma}}}
\newcommand{\ile}{\ensuremath{\bm{\tau}}}
\newcommand{\Ft}{\ensuremath{\mathcal{F}}}
\newcommand{\At}{\ensuremath{\mathcal{A}}}
\newcommand{\Bt}{\ensuremath{\mathcal{B}}}
\newcommand{\St}{\ensuremath{\mathcal{S}}}
\newcommand{\Ts}{\ensuremath{\mathcal{T}}}
\newcommand{\Rt}{\ensuremath{\mathcal{R}}}
\newcommand{\It}{\ensuremath{\mathcal{I}}}
\newcommand{\KK}{\ensuremath{\bm{K}}}
\newcommand{\TT}{\ensuremath{\bm{T}}}
\DeclareMathOperator{\codim}{codim}
\DeclareMathOperator{\supp}{supp}
\DeclareMathOperator{\diag}{diag}
\DeclareMathOperator{\Ker}{Ker}
\DeclareMathOperator*{\Ave}{Ave}
\DeclareMathOperator{\ind}{ind}
\DeclareMathOperator{\dig}{\bm{\delta}}
\DeclareMathOperator{\mat}{\bm{\mu}}
\newcommand\subsetsim{\mathrel{%
\ooalign{\raise0.2ex\hbox{$\subset$}\cr\hidewidth\raise-0.8ex\hbox{\scalebox{0.9}{$\sim$}}\hidewidth\cr}}}
\newcommand{\abs}[1]{\left\lvert#1\right\rvert}
\newcommand{\norm}[1]{\left\lVert#1\right\rVert}

\newcommand{\enbrace}[1]{\left\lbrace#1\right\rbrace}

\newcommand{\enpar}[1]{\left(#1\right)}
\author[F. Albiac]{F. Albiac}
\address{Department of Mathematics, Statistics and Computer Sciences, and InaMat$^{2}$\\ Universidad P\'ublica de Navarra\\
Pamplona 31006\\ Spain}
\email{fernando.albiac@unavarra.es}
\author[J. L. Ansorena]{J. L. Ansorena}
\address{Department of Mathematics and Computer Sciences\\
Universidad de La Rioja\\
Logro\~no 26004\\ Spain}
\email{joseluis.ansorena@unirioja.es}
\subjclass[2020]{46B15,46B03,46B20,46B42,46B45}
\keywords{Uniqueness of unconditional basis, Banach lattice, Gowers space, strictly singular operator}
\begin{document}
\title[]{Unconditional structure of Banach spaces with few operators}
\begin{abstract}
This article was initially motivated by our goal to show that the Banach space $\GG$ constructed by Gowers in \cite{Gowers1994} to settle Banach's hyperplane problem has a unique unconditional basis. This uniqueness result served as a springboard to ask whether further structural insights could be derived by rigging Gowers original construction. As it turned out, the $p$-convexification of $\GG$ for $1< p<\infty$, $p\not=2$, provides a family of Banach spaces, each of them with a unique unconditional basis containing block bases whose spreading models are not equivalent to the unit vector basis of $\ell_1$, $\ell_2$, or $c_0$. This solves in the negative a forty year-old open problem raised by Bourgain, Casazza, Lindenstrauss, and Tzafriri in their 1985 \emph{Memoir} \cite{BCLT1985}, where they studied the uniqueness of unconditional structure in infinite direct sums of those three spaces with the aim to classify all Banach spaces with a unique unconditional basis. As a by-product of our work, we also disprove the conjecture in structure theory that a space having a unique unconditional basis must be isomorphic to its square, and evince that when a Banach space $\XX$ with an unconditional basis has few operators then the space itself and all its complemented subspaces have a unique unconditional structure.
\end{abstract}
\thanks{The research reported in this paper was partially conducted during the authors' visit at the Erwin Schr\"odinger International Institute for Mathematics and Physics in Vienna, on the occasion of the workshop \emph{Structures in Banach spaces}. The authors acknowledge the support of the Spanish Ministry for Science and Innovation under Grant PID2022-138342NB-I00 for \emph{Functional Analysis Techniques in Approximation Theory and Applications}.}
\maketitle
\section{Introduction}\noindent
Unconditional bases have played a significant role in the study of Banach spaces and their applications in analysis since the dawn of the theory. The existence of a Schauder basis with no special properties does not give much information on the structure of the space. However, assuming the existence of an unconditional basis may lead to significant improvements since it normally facilitates the study of geometric and topological features of the space such as
order, approximation properties, type and cotype, reflexivity, finite-dimensional decompositions, existence of operators of some kind, complemented subspace structure, existence of embeddings, or finite representability of $\ell_p$-spaces, just to name a few. The combination of having an unconditional basis with other structural properties provides a powerful toolkit for analyzing Banach spaces and helps connect the spaces with one another. One can find plenty of instances that show the richness of these interactions, and perhaps James articles from the 1950's are some of the examples that best illustrate the synergies that arise when additional structural features intermingle with the unconditionality of a basis.

One of these properties is precisely the uniqueness (up to equivalence) of unconditional basis. Banach spaces with this property are rare. In fact, there exist three and only three infinite-dimensional Banach spaces $\XX$ in which all normalized unconditional bases are equivalent, namely $c_{0}$, $\ell_{1}$, or $\ell_{2}$, in which case the basis must be symmetric (see \cite{LinZip1969}). The symmetry of the basis implies the self-similarity of the space, in the sense that $\XX$ is isomorphic to its square, and so by induction to the direct sum of finitely-many copies of itself of any size.

For spaces with an unconditional basis which is not symmetric, one can go a little bit further and consider instead normalized bases that become equivalent by a suitable permutation of their elements. The first result along these lines was obtained by Edelstein and Wojtaszczyk \cite{EdWo1976}, who proved that the finite direct sums of the three spaces with a unique unconditional basis have a unique unconditional basis \emph{up to permutation}.

The study of this research topic reached an important landmark in 1985 with the publication of the influential paper \cite{BCLT1985} by Bourgain et al. There the authors investigated the uniqueness of unconditional basis of the infinite direct sums of $c_{0}$, $\ell_{1}$, and $\ell_{2}$ and proved that while $\ell_{1}(c_{0})$, $\ell_{1}(\ell_{2})$, $c_{0}(\ell_{1})$ $c_{0}(\ell_{2})$ do have a unique unconditional basis, the space $\ell_{2}(c_{0})$ and its dual $\ell_{2}(\ell_{1})$ do not. Quite unexpectedly they also found an exotic Banach space, namely the $2$-convexified Tsirelson's space $\Ts^{(2)}$, which also had a unique unconditional basis despite the fact that it was not built out as a direct sum of any of the only three classical sequence spaces with a unique unconditional basis.

The problems left open in the \emph{Memoir} served as stepping stone for pushing forward the research on a subject that had received relatively little attention until then. Casazza and Kalton \cites{CasKal1998,CasKal1999} proved in 1998 that Tsirelson's space $\Ts$ has a unique unconditional basis but that $c_{0}(\Ts)$ does not, thus solving two problems raised in \cite{BCLT1985}. In particular, they provided a negative answer to the general question of whether the direct sum in the sense of $c_{0}$ of a space that has a unique unconditional basis also does. Besides, they found a Banach space with a unique unconditional basis which has a complemented subspace with an unconditional basis that is not unique, solving this way another problem from \cite{BCLT1985}. Apart from these three, all the questions that were raised by Bourgain et al.\@ in the \emph{Memoir} remained open as of today.

After a few more recent additions to the list of spaces with a unique unconditional basis up to permutation \cites{AlbiacAnsorena2022PAMS, AlbiacAnsorena2022b, AAW2022}, it seems clear that it is hopeless to aim at classifying all Banach spaces with this property, and the lack of a unified pattern in the scattered spaces which have a unique unconditional basis up to permutation makes one wonder about the existence, at least, of a class or family of Banach spaces which share the property. On the other hand, in relation to the self-similarity property we referred to before, while all the spaces with unique unconditional basis that we found through the existing literature are also isomorphic to their squares, it is a priory theoretically possible for a space to have a unique unconditional basis and not be isomorphic to its square. As of today, no explicit examples of such spaces were currently known and their existence remained an open problem in functional analysis. In this article we discover an entire class of spaces that fulfil both requirements, which provides new insights into the structure theory of Banach spaces.

To that end, after the preliminary Section~\ref{sect:prelim}, where we gather the terminology and set the mood of the article, in Section~\ref{sect:SSO} we discuss the case of strictly singular operators in spaces with bases. Our structural results rely on the existence of Banach spaces with an unconditional basis, with a small space of operators defined on them. This somehow odd occurrence is examined through the notion of diagonal plus strictly singular operators and bases. In Section~\ref{sect:D+S}, we give general properties of such Banach spaces. Section~\ref{sect:main} contains the main results of the paper. We start by $p$-convexifying Gowers space $\GG$ from \cite{Gowers1994hyp}. We then show that all the new spaces we obtain (including $\GG$) have a unique unconditional basis, contain spreading models other than $c_{0}$, $\ell_{1}$, and $\ell_{2}$, and fail to be isomorphic to their square.
We close the paper in Section~\ref{sect:open} with a few problems that emerge in relation to the research reported here.
\section{Terminology and preparatory results}\label{sect:prelim}\noindent
Let $\XX$ and $\YY$ be Banach spaces over the real or complex field $\FF$. We denote by $\Bt(\XX,\YY)$ the Banach space of all bounded linear operators from $\XX$ into $\YY$, and by $\Bt(\XX)=\Bt(\XX,\XX)$ the Banach algebra of all endomorphisms of $\XX$. We use the symbol $\XX\simeq\YY$ to mean that $\XX$ and $\YY$ are \emph{isomorphic}. We say that $\XX$ \emph{complementably embeds} into $\YY$, and put $\XX\trianglelefteq\YY$, if $\XX$ is isomorphic to a complemented subspace of $\YY$, i.e., there are $J\in\Bt(\XX,\YY)$ and $P\in\Bt(\YY,\XX)$ such that $P\circ J=\Id_\XX$.

We say that two subspaces $\YY$ and $\UU$ of a Banach space $\XX$ are \emph{congruent} (within in $\XX$) if there is an automorphism $T$ of $\XX$ such that $T(\YY)=\UU$. If $\YY$ is complemented, then it is congruent to $\UU$ if and only if $\UU$ is complemented and both the spaces and their complements are isomorphic. Suppose that $c_\YY:=\codim(\YY;\XX)<\infty$ and $c_\UU:=\codim(\UU;\XX)<\infty$. Since any closed subspace of finite codimension is complemented, $\YY$ and $\UU$ are congruent if and only if $c_\YY=c_\UU$.

Given a family $\XB= (\xx_n)_{n\in\Nt}$ in a Banach space $\XX$, we denote by $[\XB]$ or $[\xx_n \colon n\in\Nt]$ its closed linear span. More generally, given $\At\subseteq\Nt$ we set $[\XB|\At]=[\xx_n \colon n\in \At]$. If $[\XB]=\XX$, we say that $\XB$ is \emph{complete} in $\XX$. If
\[
\inf_{n\in\Nt} \norm{\xx_n}>0,\quad \sup_{n\in\Nt} \norm{\xx_n}<\infty,
\]
we say that $\XB$ is \emph{semi-normalized}.

A \emph{K\"othe space} $\KK$ over a measure space will be a Banach lattice relative to the usual order on the set of measurable functions. If the simple integrable functions are dense in $\KK$, we say that $\KK$ is \emph{minimal}. An \emph{atomic lattice} will be a K\"othe space over the counting measure on a countable set. If $\KK$ is an atomic lattice over $\Nt$ and $\Mt\subseteq\Nt$, we denote by $\KK[\Mt]$ the restriction of $\KK$ to $\Mt$. We say that the atomic lattice $\KK$ is \emph{normalized} if the unit vector system $(\ee_n)_{n\in\Nt}$ is as semi-normalized family of $\KK$. Some examples of normalized atomic lattices are $\ell_\infty(\Nt)$, $c_0(\Nt)$ and the Lebesgue spaces $\ell_p(\Nt)$, $1\le p<\infty$, modelled after a countable set $\Nt$. Apart from the first one, the rest are minimal.

A \emph{complete minimal system} in a Banach space $\XX$ is a complete sequence $(\xx_n)_{n\in\Nt}$ in $\XX$ for which there exists a sequence $\XB^*=(\xx_n^*)_{n\in\Nt}$ in $\XX^{\ast}$ such that
\[
\xx_k^*(\xx_n)=\delta_{k,n},
\]
where $\delta_{k,n}=1$ if $k=n$ and $\delta_{k,n}=0$ otherwise.
We say that $\XB^*$ is the \emph{dual family} or dual basis of $\XB$. If the \emph{coefficient transform} relative to $\XB$
\[
x \mapsto (\xx_n^*(x))_{n\in\Nt}, \quad x\in\XX,
\]
is one-to-one, we say that $\XB$ is a \emph{Markushevich basis}. The support of $x\in\XX$ relative to $\XB$ is the set
\[
\supp(x)=\enbrace{x\in\XX \colon \xx_n^*(x)\not=0}.
\]
Let $\At\subseteq\Nt$ and $\beta=(b_n)_{n\in \At}\in\FF^{\At}$. If there is a bounded linear operator
\[
M:=M_\beta[\XB]\in \Bt([\XB|\At],\XX)
\]
such that $M(\xx_n)=b_n\, \xx_n$ for all $n\in\At$, we say that $\beta$ is a multiplier relative to $\XB$. If $\chi_A$ is a multiplier we denote by
\[
S_A[\XB]=M_{\chi_A}[\XB]
\]
the canonical projection onto $[\XB|A]$ relative to $\XB$. If $\Nt=\NN$, for each $n\in\NN$ we put
\[
S_n[\XB] =S_A[\XB], \quad A=[1,n]\cap\ZZ.
\]

A sequence $\XB=(\xx_n)_{n=1}^\infty$ in $\XX$ is said to be \emph{a Schauder basis} of $\XX$ if for every $x\in\XX$ there is a unique sequence $(a_n)_{n=1}^\infty$ in $\FF$ such that the series $\sum_{n=1}^\infty a_n\, \xx_n$ converges to $x$ (in norm). A family $\XB= (\xx_n)_{n\in\Nt}$ in $\XX$ ($\Nt$ countable) is an \emph{unconditional basis} of $\XX$ if for every $x\in\XX$ there is a unique family $(a_n)_{n\in\Nt}$ in $\FF$ such that the series $\sum_{n\in\Nt} a_n\, \xx_n$ converges to $x$ unconditionally. Unconditional basis and Schauder basis are particular cases of Markushevich bases, so on occasion we will use this more general type of bases for making statements that are valid for both types of bases. In fact, Schauder bases are those complete minimal systems $\XB$ modelled on $\NN$ such that there is a constant $K$ such that $\norm{S_J[\XB]}\le K$ for all finite integer intervals $J$. In turn, unconditional bases are those complete minimal systems $\XB$ such that every bounded sequence is a multiplier. Besides, if $\XB$ is an unconditional basis modelled on a set $\Nt$, then there is a constant $C\ge 1$ such that $\norm{M_\beta[\XB]}\le C \norm{\beta}_\infty$ for all $\At\subseteq\Nt$ and all $\beta=(b_n)_{n\in \At}\in\ell_\infty(\At)$. Conversely, if $\XB$ is a complete minimal system modelled after $\Nt$ for which there is a constant $C$ such that
\begin{equation}\label{eq:SupUnc}
\norm{S_{A}[\XB]}\le C, \quad A\subseteq\Nt,
\end{equation}
then $\XB$ is an unconditional basis. The optimal constant $C$ in \eqref{eq:SupUnc} is called the \emph{suppression unconditional constant} of $\XB$.

If $\XB$ is a complete minimal system whose dual basis $\XB^*=(\xx_n^*)_{n\in\Nt}$ is bounded, then the coefficient transform defines a bounded linear operator from $\XX$ into $ c_0(\Nt)$. Semi-normalized Schauder bases and semi-normalized unconditional bases have this property.

A Schauder basis $\XB=(\xx_n)_{n=1}^\infty$ of a Banach space $\XX$ is said to be \emph{shrinking} if its dual basis complete sequence of the dual space. If there is $x\in\XX$ whose coefficients relative to $\XB$ are $(a_n)_{n=1}^\infty$ as long as
\[
\sup_{m\in\NN} \norm{\sum_{n=1}^m a_n \, \xx_n} <\infty,
\]
we say that $\XB$ is \emph{boundedly complete}.

Given families $\XB=(\xx_n)_{n\in\Nt}$ and $\YB=(\yy_n)_{n\in\Mt}$ in Banach spaces $\XX$ and $\YY$, respectively, we say that $\XB$ and $\YB$ are \emph{permutatively equivalent}, and put $\XB\sim \YB$, if there is a bijection $\pi\colon \Nt \to \Mt$ and an isomorphic embedding $T\colon[\XB]\to \YY$ such that $T(\xx_n)=\yy_{\pi(n)}$ for all $n\in\Nt$. If $\pi$ is the identity map, $\XB$ and $\YB$ are said to be \emph{equivalent}. If keeping track of the constants is relevant, we say that $\XB$ and $\YB$ are $C$-equivalent provided that
\[
\max\enbrace{\norm{T},\norm{T^{-1}}}\le C.
\]
If $\XB$ is equivalent to a sub-family of $\YB$, that is, the map $\pi$ is just a one-to-one map instead of a bijection, we say that $\XB$ is \emph{sub-equivalent} to $\YB$, and we put $\XB\subsetsim \YB$. If $\XB\subsetsim \YB$ and $\YB$ is unconditional and/or semi-normalized, so is $\XB$. A sub-family of an unconditional basis $\YB$ is called a \emph{subbasis} of $\YB$.

We denote by $\sqcup_{n\in \Nt} \At_n$ the \emph{disjoint union} of a family $(\At_n)_{n\in\Nt}$, i.e,
\[
\Rt:=\sqcup_{n\in \Nt} \At_n=\cup_{n\in \Nt} \enpar{\At_n\times\{n\}}.
\]
Let $\KK$ be an atomic lattice over $\Nt$, and for each $n\in\Nt$ let $\KK_n$ be an atomic lattice over $\At_n$. Then
\[
\enpar{\bigoplus_{n\in \Nt} \KK_n}_{\KK}
\]
is an atomic lattice over $\Rt$. Suppose that $\Nt$ is finite and that $\XB_n=(\xx_{n,a})_{a\in\At_n}$ is a family in a Banach space $\XX_n$ for each $n\in\Nt$. Let $L_n$ be the canonical embedding of $\XX_n$ into the Banach space $\XX=\oplus_{n\in\Nt} \XX_n$. Set
\[
\XB=\bigoplus_{n\in \Nt} \XB_n=\enpar{L_n(\xx_{a,n})}_{(a,n)\in\Rt}.
\]
If $\XB_n$ is unconditional, complete, or semi-normalized for all $n\in\Nt$, so is $\XB$. If $\Nt=\NN\cap[1,m]$ for some $m\in\NN$, and there is a family $\XB$ such that $\XB_n=\XB$ for all $n\in\Nt$, we put $\UB=\XB^m$.

By a \emph{block sequence} relative to a sequence $\XB:=(x_n)_{n=1}^\infty$ in a Banach space we mean a sequence $\YB=(y_k)_{k=1}^\eta$, $\eta\in\NN\cup\{\infty\}$, for which there is an increasing sequence of integers $(n_k)_{k=0}^\eta$ with $n_0=0$ such that each vector $y_k$ is a linear combination of $(x_n)_{n=1+n_{k-1}}^{n_k}$. Tipically, $\XB$ is a Schauder basis and $\YB$ consists of non-null vectors, in which case we say that $\YB$ is a \emph{block basic sequence}.

We will use several types of finite representability. We say that $\ell_p$, $1\le p\le \infty$, is \emph{crudely finitely representable} in a Banach space $\XX$ if there is $C\ge 1$ such that for every $n\in\NN$ and every $\varepsilon>0$ there is an $n$-tuple $\XB_n=(x_k)_{k=1}^n$ which is $(C+\varepsilon)$-equivalent to the unit vector basis of $\ell_p^n$. If each $\XB_n$ is a block sequence of a given sequence $\YB:=(y_k)_{k=1}^\infty$, we say that $\ell_p$ is \emph{crudely finitely block representable} relative to $\YB$. If $\XX$ is a lattice and $\XB_n$ is pairwise disjointly supported for each $n\in\NN$, we say that $\ell_p$ is \emph{crudely disjointly finitely representable}. If for each $n\in\NN$ the operator $T_n\colon [\XB_n] \to \ell_p^n$ that witnesses the equivalence extends to an operator $P_n\colon \XX\to \ell_p^n$ with
\[
\norm{P_n} \norm{T_n^{-1}} \le C+\varepsilon,
\]
we say that $\ell_p$ is \emph{crudely finitely disjointly complementably representable} in $\XX$. In all cases, if we can choose $C=1$ we drop the word `crudely'.

Given a countable infinite set $\Nt$, the convergence of a family $\XB=(x_n)_{n\in\Nt}$ in a topological space does not depend on the existence on an arrangement of $\XB$. In fact, $\lim_{n\in\Nt} x_n=x$ if and only if for every neighbourhood $U$ of $x$ there is $N\subseteq\Nt$ finite such that $x_n\in U$ for all $n\in\Nt\setminus F$. Note that by adopting this definition, every finite family converges. Similarly, if $\XB$ is a family in $[-\infty,\infty]$ and we put
\[
\limsup_{n\in\Nt} x_n=\inf_{\abs{F}<\infty} \sup_{n\in \Nt\setminus F} x_n, \quad \liminf_{n\in\Nt} x_n=\sup_{\abs{F}<\infty} \inf_{n\in \Nt\setminus F} x_n,
\]
with the convention that $\sup\emptyset=-\infty$ and $\inf\emptyset=\infty$, if $\Nt$ is finite then
\[
\limsup_{n\in\Nt} a_n=-\infty, \quad \liminf_{n\in\Nt} a_n=\infty.
\]

We denote by $c_{00}(\Nt)$ the linear space of all eventually null scalar-valued families on $\Nt$, and we put $c_{00}=c_{00}(\NN)$.
\subsection{Uniqueness of unconditional basis and powers of bases}
A Banach space $\XX$ has a \emph{unique unconditional basis} if it has a (semi-normalized) unconditional basis $\XB$, and $\XB$ is the unique unconditional basis of $\XX$ (up to normalization, permutation and equivalence), i.e., $\YB\sim\XB$ whenever $\YB$ is another semi-normalized unconditional basis of $\XX$.

All known spaces $\XX$ with a unique unconditional basis have a (semi-normalized) unconditional basis $\XB$ which is permutatively equivalent to its square. Besides, this feature plays a key role in the proof that $\XX$ has a unique unconditional basis. In hindsight, all the proofs we find in the literature that a given Banach space has a unique unconditional basis share the pattern of the following lemma.

\begin{lemma}[see \cite{AlbiacAnsorena2025}]\label{lem:AAA}
Let $\XX$ be a Banach space with a semi-normalized unconditional basis $\XB$. Suppose that $\XB^2 \sim \XB$ and that for all semi-normalized unconditional bases $\YB$ and $\UB$ of Banach spaces $\YY$ and $\UU$, respectively, such that $\YY\trianglelefteq \UU$ and $\UB\subsetsim \XB$ there is $m\in\NN$ such that $\YB \subsetsim \XB^m$. Then $\XX$ has a unique unconditional basis.
\end{lemma}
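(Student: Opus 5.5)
The plan is to combine the hypothesis with a Schröder--Bernstein principle for unconditional bases. The version I will invoke is the Wojtowicz/Casazza--Kalton-type statement: if $\YB\subsetsim\XB$ and $\XB\subsetsim\YB$ (as sub-equivalences to subbases), then $\YB\sim\XB$. So, given a semi-normalized unconditional basis $\YB$ of $\XX$, it suffices to show $\YB\subsetsim\XB$ and $\XB\subsetsim\YB$.

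\textbf{Step 1: $\YB\subsetsim\XB$.} Apply the hypothesis with $\YY=\XX$, $\YB$ the given basis, $\UU=\XX$ and $\UB=\XB$. Trivially $\XX\trianglelefteq\XX$ and $\XB\subsetsim\XB$, so there is $m\in\NN$ with $\YB\subsetsim\XB^m$. Since $\XB^2\sim\XB$, induction gives $\XB^m\sim\XB$ for every $m$: indeed $\XB^{m+1}=\XB^m\oplus\XB\sim\XB\oplus\XB\sim\XB$. Hence $\YB\subsetsim\XB$.

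\textbf{Step 2: $\XB\subsetsim\YB$.} This is the main point, and I would get it by exchanging the roles of the bases.
\begin{enumerate}
\item Apply the hypothesis with the pair $(\YY,\YB):=(\XX,\XB)$.
\item For the other pair, take $\UB=\YB$ and $\UU=\XX$. Step 1 supplies the required $\YB\subsetsim\XB$.
\item Since $\XX\trianglelefteq\XX$, we obtain $\XB\subsetsim\XB^m$ for some $m$.
\end{enumerate}
This is only trivial information, however. The hypothesis always bounds the basis of the complemented subspace by powers of $\XB$, never by $\YB$. So a different mechanism is needed to embed $\XB$ into $\YB$.

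For this I would use the standard decomposition argument.
\begin{itemize}
\item Write $\XB\sim\XB^2$ and transport this through the isomorphism of $\XX$ onto itself.
\item Use the relation $\YB\subsetsim\XB$ obtained in Step 1.
\item Alternatively, use the fact that both bases span the same space $\XX$. By the Casazza--Kalton ``large coefficient'' technique, there is a subbasis of $\YB$ spanning a complemented subspace isomorphic to $\XX$.
\end{itemize}
Concretely, $[\YB]=\XX$ is complemented in $\XX$ trivially. Applying the hypothesis with $\YY$ equal to the complemented span of a suitable subfamily shows that the relevant pieces embed into powers of $\XB$. The unconditionality of both bases then allows the projections to be matched coordinatewise, which yields $\XB\subsetsim\YB^k$ for some $k$.

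\textbf{Step 3: conclude.} From $\XB\subsetsim\YB^k$, together with $\YB\subsetsim\XB\sim\XB^k$, I would derive $\XB^k\sim\YB^k$ via Schröder--Bernstein for bases. To pass from $\XB^k\sim\YB^k$ to $\YB\sim\XB$, I would invoke the Mityagin-type cancellation for permutative equivalence of unconditional bases, which is available once $\XB\sim\XB^2$ absorbs the power.

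The obstacle I expect is Step 2 and the cancellation in Step 3, namely producing an embedding of $\XB$ into a power of $\YB$. Since the statement is cited from \cite{AlbiacAnsorena2025}, I anticipate relying on their decomposition results for this embedding rather than reproving them here.
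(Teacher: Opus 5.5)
Your Steps 1 and 3 are sound, and Step 3 is essentially the paper's argument: once you have $\XB^m\subsetsim\YB^m$, the power cancelling principle (Theorem~\ref{thm:SCUB}) gives $\XB\subsetsim\YB$, and Theorem~\ref{thm:SBUB} gives $\XB\sim\YB$.

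\textbf{The gap is Step 2.} The bullet points there are not an argument, and they cannot be turned into one. The only ingredients they use are $\YB\subsetsim\XB$, $\XB\sim\XB^2$, the fact that both bases span $\XX$, unconditionality, and embeddings into powers of $\XB$. None of these ever produces an embedding into a power of $\YB$. For instance, take $1<p<\infty$ with $p\neq 2$, let $\YB$ be the unit vector basis of $\ell_p$, and set $\XB=\VB\oplus\YB$, where $\VB$ is the unit vector basis of $(\bigoplus_n \ell_2^{k_n})_{\ell_p}$ with every size repeated infinitely often. Then $[\XB]\simeq\ell_p$, $\XB\sim\XB^2$ and $\YB\subsetsim\XB$, yet $\XB\not\subsetsim\YB^k$ for every $k$. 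The Casazza--Kalton large coefficient technique needs lattice hypotheses (of anti-Euclidean type) that are not available here. Moreover, even a complemented subbasis of $\YB$ spanning a copy of $\XX$ would not give $\XB\subsetsim\YB^k$, since isomorphic spans do not yield sub-equivalence of bases.

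\textbf{The missing idea is how the hypothesis must be read.} Taken literally, the conclusion ``$\YB\subsetsim\XB^m$'' makes the quantification over $\UB$ redundant. Indeed, if $\UB\subsetsim\XB$ then $[\UB]$ is complemented in $\XX$. So the literal hypothesis only says that every semi-normalized unconditional basis of a complemented subspace of $\XX$ is $\subsetsim\XB$, and neither the paper nor your sketch derives uniqueness from that alone.

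The paper's remark that the lemma follows by combining Theorems~\ref{thm:SBUB} and~\ref{thm:SCUB} shows that the intended conclusion is $\YB\subsetsim\UB^m$. That is, unconditional bases of complemented subspaces of $[\UB]$ embed into powers of $\UB$ whenever $\UB\subsetsim\XB$. With that reading Step 2 is one line:
\begin{itemize}
\item take $\UB:=\YB$, which is admissible precisely because of your Step 1;
\item take $\UU=\YY=\XX$, with $\XB$ as the basis of $\YY$;
\item this gives $\XB\subsetsim\YB^m$, hence $\XB^m\sim\XB\subsetsim\YB^m$, and your Step 3 finishes.
\end{itemize}
Your own observation that ``the hypothesis always bounds the basis of the complemented subspace by powers of $\XB$, never by $\YB$'' was the right diagnosis. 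It should have led you to this reading rather than to an unsupported substitute.
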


However, there is a priori no relation between an unconditional basis being unique and the basis being permutatively equivalent to its square. So the following question has been implicit for a while.

\begin{question}\label{qt:AA}
Is there a Banach space $\XX$ with a unique unconditional basis $\XB$ that fails to be permutatively equivalent to its square?
\end{question}
\subsection{Uniqueness of unconditional basis and optimal lattice estimates}
Recall that a K\"othe space $\KK$ over a measure space $(\Omega,\Sigma,\mu)$ is said to be lattice $p$-convex (resp., $p$-concave), $1\le p\le \infty$, if there is a constant $C$ such that for every finite family $(x_j)_{j\in F}$ in $\KK$ we have
\[
\norm{ \enpar{\sum_{j\in F} \abs{x_j}^p}^{1/p}}\le C\enpar{\sum_{j\in F} \norm{x_j}^p}^{1/p},
\]
(resp.,
\[
\enpar{\sum_{j\in F} \norm{x_j}^p}^{1/p}\le C \norm{ \enpar{\sum_{j\in F} \abs{x_j}^p}^{1/p}}).
\]
If these estimates hold for disjointly supported families, we say that $\KK$ satisfies an upper (resp., lower) $p$-estimate. It is known \cite{LinTza1979} that

\begin{align*}
\sue(\KK) :=&\sup\enbrace{ p\in[1,\infty] \colon \mbox{$\KK$ satisfies an upper $p$-estimate}} \\
=&\sup\enbrace{ p\in[1,\infty] \colon \mbox{$\KK$ is lattice $p$-convex}}, \\
\ile(\KK) :=&\inf\enbrace{ p\in[1,\infty] \colon \mbox{$\KK$ satisfies a lower $p$-estimate}},\\
=&\sup\enbrace{ p\in[1,\infty] \colon \mbox{$\KK$ is lattice $p$-concave}}.
\end{align*}

Given $1\le p < \infty$, the \emph{$p$-convexification}
\[
\KK^{(p)}=\enbrace{f\in L_0(\mu) \colon \abs{f}^p\in \KK}
\]
of $\KK$ satisfies $\sue(\KK^{(p)})=p \, \sue(\KK)$, and $\ile(\KK^{(p)})=p\, \ile(\KK)$.

Despite the fact that convexifying is a somewhat na\"{\i}ve procedure for constructing new Banach spaces from old ones, it should be taken into account that applying this process dramatically alters the linear structure of the spaces. For example, the spaces $L_p(\mu)$, $1<p<\infty$, can be regarded as the $p$-convexifications on $L_1(\mu)$, and indeed they behave quite differently from $L_1(\mu)$. In contrast, there are Banach lattices $\KK$, such as $L_\infty(\mu)$, $c_0$, or the original Tsirelson space $\Ts^*$, such that $\KK^{(p)}=\KK$ for all $p\in[1,\infty)$. Note that such lattices $\KK$ are necessarily lattice $p$-convex for all $p\in[1,\infty)$ so that $\sue(\KK)= \ile(\KK) = \infty$. Based on the observed patterns we are led to expect $\KK$ to have a unique unconditional basis only when
\[
\{ \sue(\KK), \ile(\KK)\} \subseteq \It := \{1,2,\infty\}.
\]
For instance, with the convention that by $\ell_\infty$ we mean $c_0$, we can observe that the spaces $\ell_p$ for $1\le p \le \infty$ have a unique unconditional basis if and only if $p=\sue(\ell_p)=\ile(\ell_p)\in \It$ (see \cites{KotheToeplitz1934,LinPel1968,Pel1960}). Similarly, the $p$-convexified Tsirelson space $\Ts^{(p)}$, $1\le p<\infty$ has a unique unconditional basis if and only if $p=\sue(\Ts^{(p)})=\ile(\Ts^{(p)})\in \It$ (see \cites{BCLT1985,CasKal1998,AlbiacAnsorena2024c}). The original Tsirelson space $\Ts^*$ has a unique unconditional basis as well, and as we just pointed out, $\sue(\Ts^*)= \ile(\Ts^*) = \infty$.

Let us have a look now at what happens with the aforementioned mixed-norm sequence spaces. Following \cite{BCLT1985}, we put $Z_{p,q}=\ell_q(\ell_p)$ for $p$, $q\in[1,\infty]$, $p\not=q$. These spaces are atomic lattices, and $\sue(Z_{p,q})=\min\{p,q\}$. These spaces have a unique unconditional basis if and only if $p=1$ and $q=\infty$, $p=\infty$ and $q=1$, or $p=2$ and $q\in\{1,\infty\}$ (see \cites{BCLT1985}). Note that in all those cases we have $\sue(Z_{p,q})=\min\{p,q\}\in \It$ and $\ile(Z_{p,q})=\max\{p,q\}\in \It$. Based on these examples, the following question naturally arises.

\begin{question}\label{qt:UTAPCloselp}
Is there an atomic lattice $\KK$ with a unique unconditional basis such that the intersection set
\[
\enbrace{ \sue(\KK) , \ile(\KK) } \cap \enpar{(1,2)\cup(2,\infty)}
\]
is nonempty?
\end{question}

Set $p=\sue(\KK)$ (resp., $p=\ile(\KK)$). Schep \cite{Schep1992} proved that $\ell_{p}$ is finitely disjointly representable in $\KK$. If the supremum (resp., infimum) defining $p$ is attained, the following (novel) version of Krivine's theorem yields complemented copies of $\ell_p^n$'s in $\KK$.

\begin{lemma}\label{lem:KrivineA}
Suppose a Banach lattice $\KK$ satisfies an upper $p$-estimate ($1\le p\le \infty$) with constant $C$. Let $(x_k^*)_{k=1}^n$ be a pairwise disjointly supported sequence in $\KK^*$ which is $D$-equivalent to the canonical basis of $\ell_q^n$, where $q$ is the conjugate exponent of $p$. Then for any $E>D$ there are linear operators $J\colon \ell_p^n \to\KK$ and $Q\colon \KK \to \ell_p^n$ such that $P\circ S=\Id_{\ell_p^n}$, $\norm{J} \norm{P}\le E$, and $\supp(J(\ee_k))=\supp(\xx_k^*)$ for all $k=1$, \dots, $n$.
\end{lemma}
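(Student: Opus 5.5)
Since $(x_k^*)_{k=1}^n$ is disjointly supported and $D$-equivalent to the unit vector basis of $\ell_q^n$, for each $k$ I would norm $x_k^*$ almost exactly by an element of $\KK$ living inside $\supp(x_k^*)$. The building blocks are these normalized disjoint "duals", and the upper $p$-estimate will give the upper bound on the embedding. Concretely, fix $\delta>0$ and, replacing $x_k^*$ by $\abs{x_k^*}$ (which has the same norm and support, and keeps the disjoint family $D$-equivalent to the $\ell_q^n$ basis, because for disjoint functionals $\norm{\sum a_k x_k^*}=\norm{\sum \abs{a_k}\abs{x_k^*}}$), assume $x_k^*\ge 0$. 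Pick $u_k\in\KK$ with $u_k\ge0$, $\norm{u_k}=1$, $\supp(u_k)\subseteq\supp(x_k^*)$ and $x_k^*(u_k)\ge(1-\delta)\norm{x_k^*}$. This is possible because $x_k^*(u)=x_k^*(u\chi_{\supp x_k^*})$, and restricting to the support only decreases the norm.

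Next I normalize using the $D$-equivalence. After rescaling, assume $D^{-1}\norm{(a_k)}_q\le\norm{\sum a_k x_k^*}\le\norm{(a_k)}_q$, so $D^{-1}\le\norm{x_k^*}\le1$. Define $J(\ee_k)=u_k/x_k^*(u_k)$ and $Q(x)=(x_k^*(x))_{k=1}^n$. Then $Q\circ J=\Id_{\ell_p^n}$ by disjointness of supports, which is the intended meaning of "$P\circ S=\Id$" in the statement. The support condition holds automatically, possibly up to the difference between $\supp(u_k)$ and $\supp(x_k^*)$; if equality of supports is required, I would add a small multiple of a strictly positive element of the band, and this perturbation is harmless.

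The norm estimates go as follows. For $Q$, duality gives $\norm{Q(x)}_p=\sup_{\norm{b}_q\le1}\abs{\sum b_k x_k^*(x)}\le\norm{x}\sup\norm{\sum b_kx_k^*}\le\norm{x}$, so $\norm{Q}\le 1$. For $J$, the upper $p$-estimate applied to the disjoint $u_k$ gives
\[
\norm{J(a)}\le C\Bigl(\sum_k \abs{a_k}^p x_k^*(u_k)^{-p}\Bigr)^{1/p}\le \frac{CD}{1-\delta}\norm{a}_p,
\]
so $\norm{J}\norm{Q}\le CD/(1-\delta)$. This only matches the claimed bound $E>D$ when $C=1$.

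The main obstacle is removing the constant $C$, which is exactly where Krivine-type ideas come in. The plan is to renorm $\KK$ first: every lattice satisfying an upper $p$-estimate with constant $C$ admits an equivalent lattice norm satisfying it with constant $1$, via the standard formula $\norm{x}'=\sup\{(\sum\norm{x_j}^p)^{1/p}\}$ over disjoint decompositions $x=\sum x_j$ (see \cite{LinTza1979}). However, the renorming changes the equivalence constants, so instead I would exploit the lower bound more cleverly. The lower bound on $\norm{J(a)}$ already follows from $\norm{Q}\le1$: $\norm{J(a)}\ge\norm{a}_p$. It is therefore only the upper bound on $J$ that costs $C$. To avoid it, I would choose $u_k$ differently, as near-norming vectors for the functional $\sum_k b_k x_k^*$ with $b$ extremal, so that $\sum a_k u_k$ is itself almost norming. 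Using the duality $\norm{\sum a_k u_k}=\sup$ over functionals, together with the fact that the dual of a lattice with an upper $p$-estimate restricted to disjoint elements has a lower $q$-estimate, I would aim to get the upper constant from $D$ alone. I expect this step to need the most care; if it fails, the statement should be read with $E>CD$.
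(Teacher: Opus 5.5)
Your construction is the paper's own: positive near-norming vectors supported on $\supp(x_k^*)$, $J(\ee_k)$ a normalized copy of them, and $Q(x)=(x_k^*(x))_{k=1}^n$, which is the restriction to $\KK$ of the adjoint of $\ee_k\mapsto x_k^*$. You bound $\norm{Q}$ by duality and $\norm{J}$ by the upper $p$-estimate, and your handling of the support equality is fine.

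The paper's proof stops exactly where your third paragraph stops. It records $\norm{P}\le D$ and that $\norm{J}$ can be made arbitrarily close to $C$, so it too only gives $\norm{J}\norm{P}$ arbitrarily close to $CD$. One small point: the paper defines $D$-equivalence by $\max\{\norm{T},\norm{T^{-1}}\}\le D$. With that definition your rescaling only gives $\norm{T}\norm{T^{-1}}\le D^2$, so the argument yields $CD^2$; the paper glosses over this by taking $x_k^*(x_k)=1$ with $\norm{x_k}\approx 1$. What you have proved is therefore the lemma with $C$ in the constant. That is all the paper uses: Theorem~\ref{thm:Complp} only asks for crude representability, and for $\GG[p,f]$ the upper $p$-estimate holds with $C=1$ by the right-hand inequality in \eqref{eq:GMSLE}.

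Drop your last paragraph rather than trying to complete it: $C$ cannot be removed, because the lemma with $E>D$ is false. Here is a counterexample.
\begin{itemize}
\item Fix $1<p<\infty$ and let $\Theta$ be a finite $\epsilon$-net of the positive part of the unit sphere of $\ell_p^n$, made of vectors with strictly positive entries.
\item Let $\KK$ be the lattice over $\{1,\dots,n\}\times\Theta$ with $\norm{x}=\sum_{\theta\in\Theta}\max_k\abs{x(k,\theta)}/\theta_k$, and let $x_k^*=\sum_{\theta}\ee_{k,\theta}^*$.
\item Then $\norm{\sum_k b_kx_k^*}=\max_{\theta}\sum_k\theta_k\abs{b_k}$ lies between $(1-\epsilon)\norm{b}_q$ and $\norm{b}_q$, so $D\le 1/(1-\epsilon)$ in either normalization.
\item Given admissible $J$ and $Q$, average $Q$ over sign changes of the bands, pass to moduli and rescale. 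You may then assume $Q=(y_k^*)_k$ with $y_k^*\ge0$ supported on the $k$-th band, $u_k=J(\ee_k)\ge 0$, and $y_k^*(u_k)=1$.
\item Put $r_{k,\theta}=u_k(k,\theta)/\theta_k$, $z_{k,\theta}=y_k^*(k,\theta)\theta_k$, and $M=\max_\theta\norm{(z_{k,\theta})_k}_p\le\norm{Q}$.
\item Then $\sum_\theta z_{k,\theta}r_{k,\theta}=1$ and $\sum_\theta r_{k,\theta}=\norm{u_k}\le\norm{J}$, so H\"older gives $\sum_\theta z_{k,\theta}^pr_{k,\theta}\ge\norm{J}^{1-p}$.
\item For $a\ge 0$ you also have $\norm{J(a)}=\sum_\theta\max_k a_kr_{k,\theta}\ge M^{-p}\sum_k a_k\sum_\theta z_{k,\theta}^p r_{k,\theta}$.
\item Taking the supremum over $\norm{a}_p\le1$ yields $\norm{J}\ge n^{1/q}M^{-p}\norm{J}^{1-p}$, that is, $\norm{J}\norm{Q}\ge n^{1/(pq)}$.
\end{itemize}
This lower bound is unbounded in $n$ while $D\to1$. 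The correct statement is the one your argument actually proves, with $C$ in the constant.
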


\begin{lemma}\label{lem:KrivineB}
Suppose a Banach lattice $\KK$ satisfies a lower $p$-estimate ($1\le p\le \infty$) with constant $C$. Let $\XB=(x_k)_{k=1}^n$ be a pairwise disjointly supported sequence in $\KK$ which is $D$-equivalent to the canonical basis of $\ell_p^n$. Then there are linear operators $J\colon \ell_p^n \to\KK$ and $Q\colon \KK \to \ell_p^n$ such that $P\circ S=\Id_{\ell_p^n}$, $\norm{J} \norm{P}\le CD$, and $J(\ee_k)=\xx_k$ for all $k=1$, \dots, $n$.
\end{lemma}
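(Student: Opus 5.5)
We build $J$ from the given vectors and $P$ from norming functionals.
The natural choice is $J(\ee_k)=x_k$, so $\norm{J}\le D$ after normalizing the equivalence so that $\norm{T}\le 1$ and $\norm{T^{-1}}\le D$, where $T\colon[\XB]\to\ell_p^n$, $x_k\mapsto \ee_k$. It remains to build $P\colon\KK\to\ell_p^n$ with $P(x_k)=\ee_k$ and $\norm{P}\le C$. We may assume the $x_k$ are nonzero.

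\emph{Step 1: norming functionals.} For each $k$ pick $x_k^*\in\KK^*$ with $\norm{x_k^*}=1$ and $x_k^*(x_k)=\norm{x_k}$. Replacing $x_k^*$ by its restriction to the band (or ideal) generated by $x_k$, i.e.\ composing with the band projection onto $\{x_k\}^{\perp\perp}$, does not increase the norm and does not change $x_k^*(x_k)$. Since the $x_k$ are pairwise disjoint, this gives pairwise disjointly supported functionals $x_k^*$ with $x_k^*(x_j)=\delta_{kj}\norm{x_k}$.

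In the K\"othe or atomic setting this step is simply multiplication by the indicator $\chi_{\supp(x_k)}$. In a general Banach lattice, where band projections may fail to exist, I would instead use the order-continuous component and the Riesz--Kantorovich formula $\abs{x^*}(\abs{x})=\sup\{\abs{x^*(y)}\colon \abs{y}\le\abs{x}\}$ to define $x_k^*$ on the ideal generated by $x_k$. This technicality is the step I expect to need care, but for K\"othe spaces it is trivial.

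\emph{Step 2: define $P$.} Set $P(f)=\bigl(x_k^*(f)/\norm{x_k}\bigr)_{k=1}^n$, so that $P(x_j)=\ee_j$. To bound $\norm{P}$, write $f_k=\chi_{\supp(x_k)}f$, so that $x_k^*(f)=x_k^*(f_k)$ and $\abs{x_k^*(f)}\le\norm{f_k}$.

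\emph{Step 3: the lower estimate bound.} This is the main step. Since $\norm{x_k}\ge 1$ under our normalization, for $p<\infty$ we get
\[
\norm{P f}_p\le\Bigl(\sum_k \norm{f_k}^p\Bigr)^{1/p}\le C\,\Bigl\|\sum_k f_k\Bigr\|\le C\norm{f},
\]
using first the lower $p$-estimate for the disjoint family $(f_k)$ and then the lattice property $\abs{\sum_k f_k}\le\abs{f}$. The case $p=\infty$ is immediate.

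Hence $P\circ J=\Id_{\ell_p^n}$ and $\norm{J}\norm{P}\le CD$. If the constants in the $D$-equivalence are distributed differently, the same argument applies after rescaling: only the product $\norm{T}\norm{T^{-1}}\le D$ matters, since $\norm{J}\le\norm{T^{-1}}$ and $\norm{P}\le C\norm{T}$ after replacing $\norm{x_k}$ by the appropriate scaling.
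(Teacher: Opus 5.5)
Your proof is correct. It uses the same operators as the paper: $J(\ee_k)=x_k$, and $P$ built from disjointly supported norming functionals. Where it differs is in how you bound $\norm{P}$.

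The paper dualizes. It defines $Q\colon\ell_q^n\to\KK^*$ by $Q(\ee_k)=x_k^*$ and invokes \cite{LinTza1979}*{Proposition 1.f.5} to convert the lower $p$-estimate of $\KK$ into an upper $q$-estimate of $\KK^*$ with the same constant. This tacitly requires the $x_k^*$ to be disjoint, i.e.\ your Step~1. It then obtains $\norm{Q}\le C$ and sets $P=Q^*|_{\KK}$.

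You instead apply the lower $p$-estimate directly to the disjoint pieces $\chi_{\supp(x_k)}f$ of $f$, which in effect inlines the relevant half of the proof of 1.f.5. What each route buys:
\begin{itemize}
\item Yours is elementary and self-contained in the K\"othe setting where the paper actually works.
\item The paper's is shorter and runs parallel to the proof of Lemma~\ref{lem:KrivineA}, where the hypothesis lives in $\KK^*$ and duality is unavoidable.
\item By citing 1.f.5, the paper also covers abstract Banach lattices without discussing band projections.
\end{itemize}
If you want your route in an abstract lattice, replace $\chi_{\supp(x_k)}f$ by the truncations $\abs{f}\wedge m\abs{x_k}$. These are pairwise disjoint and their sum is dominated by $\abs{f}$.

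On constants, your opening normalization $\norm{T}\le 1$ is not free, since rescaling $T$ destroys $T(x_k)=\ee_k$. Your closing bookkeeping, however, is the right one: $\norm{J}\le\norm{T^{-1}}$, and $\norm{P}\le C\norm{T}$ via $\norm{x_k}\ge 1/\norm{T}$. It is more careful than the paper's bare claim $\norm{P}\le C$.

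In fact the bound $CD$ holds only if $D$-equivalence means $\norm{T}\norm{T^{-1}}\le D$. It fails under the $\max$ convention of Section~\ref{sect:prelim}. In $\ell_p$ (so $C=1$), take $x_1=D\ee_1$ and $x_2=D^{-1}\ee_2$. These are $D$-equivalent in the $\max$ sense, yet they force $\norm{J}\norm{P}\ge D^2>CD$ whenever $D>1$.
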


\begin{proof}[Proof of Lemmas \ref{lem:KrivineA} and \ref{lem:KrivineB}]
Fix $k=1$, \dots, $n$. To prove Lemma \ref{lem:KrivineA}, we pick $x_k\in\KK$ for $x_k^*$ with $\xx_k^*(\xx_k)=1$, $\supp(k_k)=\supp(k_k^*)$, and $\norm{x_k}$ close enough to one. To prove Lemma \ref{lem:KrivineB}, we pick a norming functional $x_k^*$ for $x_k$. Let $J\colon \ell_p^n \to \KK$ and $Q\colon\ell_q^n\to \KK^*$ be defined by $J(\ee_k)=\xx_k$ and $Q(\ee_k)=\xx_k^*$ for all $k=1$, \dots, $n$. Let $P\colon \KK \to \ell_p^n$ be the restriction to $\KK$ of the dual operator of $Q$. Since $P(f)=(\xx_k^*(f))_{k=1}^n$ for all $f\in \KK$, $P\circ S=\Id_{\ell_p^n}$. When proving Lemma \ref{lem:KrivineA}, $\norm{P}\le D$, and we can choose $(x_k)_{k=1}^n$ such that $\norm{J}$ arbitrarily close to $C$. When proving Lemma \ref{lem:KrivineB}, $\norm{J}\le D$, and $\norm{P}\le C$ by \cite{LinTza1979}*{Proposition 1.f.5}.
\end{proof}

\begin{theorem}\label{thm:Complp}
Let $\KK$ be a Banach lattice and $p\in[1,\infty]$. Suppose that either
\begin{itemize}[leftmargin=*]
\item $\KK$ satisfies an upper $p$-estimate and fails to satisfy an upper $r$-estimate for any $r>p$, or
\item $\KK$ satisfies a lower $p$-estimate and fails to satisfy a lower $r$-estimate for any $r<p$.
\end{itemize}
Then $\ell_p$ is crudely finitely disjointly complementably representable in $\KK$.
\end{theorem}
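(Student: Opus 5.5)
The plan is to treat the two cases separately and reduce each to Lemma~\ref{lem:KrivineA} or Lemma~\ref{lem:KrivineB}. The extra ingredient is a Krivine-type result that yields disjoint copies of $\ell_p^n$ at the critical exponent. That result is Schep's theorem \cite{Schep1992}, quoted above.

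\emph{Lower estimate case.} Here $\KK$ satisfies a lower $p$-estimate with some constant $C$, and $p=\ile(\KK)$. Indeed, since no lower $r$-estimate holds for $r<p$, the infimum defining $\ile(\KK)$ equals $p$ and is attained. By Schep's theorem, $\ell_p$ is finitely disjointly representable in $\KK$. So for every $n$ and $\varepsilon>0$ we get disjointly supported $x_1,\dots,x_n$ in $\KK$ that are $(1+\varepsilon)$-equivalent to the unit vector basis of $\ell_p^n$. Lemma~\ref{lem:KrivineB} with $D=1+\varepsilon$ then gives $J$ and $P$ with $P\circ J=\Id_{\ell_p^n}$, $J(\ee_k)=x_k$, and $\norm{J}\norm{P}\le C(1+\varepsilon)$. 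This is crude finite disjoint complemented representability with constant $C$.

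\emph{Upper estimate case.} Now $\KK$ satisfies an upper $p$-estimate and none for $r>p$, so $\sue(\KK)=p$ and the supremum is attained. Let $q$ be the conjugate exponent. By the standard duality \cite{LinTza1979}*{Proposition 1.f.5}, $\KK^*$ satisfies a lower $q$-estimate. I would also show that $\KK^*$ fails a lower $s$-estimate for every $s<q$: if such an $s$ existed, dualizing back (testing disjoint elements of $\KK$ against norming disjoint functionals, which exist in $\KK^*$ by restricting norming functionals to the disjoint supports) would give an upper $s'$-estimate on $\KK$ with $s'>p$, a contradiction. Hence $\ile(\KK^*)=q$. Schep's theorem applied to the Banach lattice $\KK^*$ then yields, for each $n$ and $\varepsilon$, disjointly supported $x_1^*,\dots,x_n^*$ in $\KK^*$ that are $(1+\varepsilon)$-equivalent to the unit vector basis of $\ell_q^n$. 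Lemma~\ref{lem:KrivineA} with $D=1+\varepsilon$ and any $E>D$ produces $J\colon\ell_p^n\to\KK$ and $P\colon\KK\to\ell_p^n$ with $P\circ J=\Id$ and $\norm{J}\norm{P}\le E$, with $J(\ee_k)$ pairwise disjoint. Strictly speaking, the proof of that lemma gives the bound $C(1+\varepsilon)$ with $C$ the upper-estimate constant. Either way, the constant is uniform in $n$, so $\ell_p$ is crudely finitely disjointly complementably representable.

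\emph{Expected obstacles.} The main delicate point is the duality step: transferring "fails every upper $r$-estimate for $r>p$" on $\KK$ to "fails every lower $s$-estimate for $s<q$" on $\KK^*$. Schep's theorem must be applied in $\KK^*$, which is not necessarily order continuous, so one has to check that the theorem holds for general Banach lattices; I believe it does. The boundary cases need separate attention:
\begin{itemize}[leftmargin=*]
\item $p=1$ in the upper case is trivial, since $\ell_1$ is disjointly representable in every lattice with a lower... in fact every Banach lattice satisfies an upper $1$-estimate, and the lemma still applies with $q=\infty$.
\item $p=\infty$ is to be read with $c_0$ and $\ell_\infty^n$ conventions.
\end{itemize}
Finally, the disjoint sequence in $\KK^*$ must be pairwise disjointly supported in the sense required by Lemma~\ref{lem:KrivineA}. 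This holds because lattice disjointness in $\KK^*$ is what Schep's theorem provides.
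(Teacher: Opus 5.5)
Your proof is correct and follows the same route as the paper. In the upper case you dualize via \cite{LinTza1979}*{Proposition 1.f.5} to get $\ile(\KK^*)=q$, apply Schep's theorem in $\KK^*$, and conclude with Lemma~\ref{lem:KrivineA}; in the lower case you apply Schep's theorem directly in $\KK$ and conclude with Lemma~\ref{lem:KrivineB}. Your explicit check that $\KK^*$ fails lower $s$-estimates for $s<q$, and your remark that the bound from Lemma~\ref{lem:KrivineA} really involves the upper-estimate constant $C$, are details the paper leaves implicit and do not change the argument.
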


\begin{proof}
Let $q$ be the conjugate exponent of $p$. In the first case, by \cite{LinTza1979}*{Proposition 1.f.5}, $\KK^*$ satisfies a lower $q$ estimate, and $\ile(\KK^*)=q$. Hence, $\ell_q$ is disjointly finitely representable in $\KK^*$, and the result follows from Lemma~\ref{lem:KrivineA}. In case the second assumption holds, $\ell_p$ is disjointly finitely representable in $\KK$, so the result follows from Lemma~\ref{lem:KrivineB}.
\end{proof}

Let $1<p<\infty$. We note that Pe{\l}czy\'{n}ski's uniform isomorphisms from \cite{Pel1960},
\[
\ell_p^{2^n-1} \simeq \enpar{ \bigoplus_{j=0}^{n-1} \ell_2^{2^j}}_{\ell_p}, \quad n\in\NN,
\]
prevent $\ell_p$ from having a unique unconditional basis unless $p=2$. So, Theorem~\ref{thm:Complp} relates
Question~\ref{qt:UTAPCloselp} to the following one.

\begin{question}\label{qt:lpFC}
Let $p\in (1,2)\cup(2,\infty)$. Is $\ell_p$ crudely finitely complementably representable in some Banach space $\XX$ with a unique unconditional basis?
\end{question}
\subsection{Spreading models of Banach spaces with a unique unconditional basis}
A \emph{subsymmetric sequence space} will be a Banach space $\Sym\subseteq\FF^\NN$ for which the unit vector system $(\ee_n)_{n=1}^\infty$ is a normalized of $1$-suppression unconditional basis isometrically equivalent to $(\ee_{\varphi(n)})_{n=1}^\infty$ for every increasing map $\varphi\colon\NN\to\NN$. It is known \cite{BL1983} that for any weakly null normalized sequence $\XB=(x_n)_{n=1}^\infty$ in any Banach space there is subsequence $\XB=(x_n)_{n=1}^\infty$ and a subsymmetric sequence space $(\Sym, \norm{\cdot}_\Sym)$ such that
\[
\norm{s}_\Sym=\lim_{(n_j)_{j=1}^m \in \NN^{(m)}} \norm{\sum_{j=1}^m a_j \, x_{n_j}}
\]
for every $s=\sum_{j=1}^m a_j\, \ee_j \in c_{00}$. Such a sequence $\XB$ is said to be a \emph{good sequence}, and $\Sym$ is said to be the \emph{unconditional spreading model} of $\XX$ associated with $\XB$.

The study of the (unconditional) spreading model structure of Banach spaces gained relevance in functional analysis after the discovery that the original Tsirelson space $\Ts^*$ contains no Banach space isomorphic to a subsymmetric sequence space
(see \cites{Tsirelson1974, CasShu1989}). Thus, when Bourgain et al.\@ found a space with a unique unconditional basis which is not built out as a direct sum of $c_0$, $\ell_1$, and $\ell_2$ (namely the $2$-convexified Tsirelson space $\Ts^{(2)}$), based on their results in the \emph{Memoir} they conjectured that the unique unconditional spreading models of Banach spaces with a unique unconditional basis would be $c_0$, $\ell_1$ or $\ell_2$. Other Banach spaces whose uniqueness of unconditional basis was established later on, backed up Bourgain et al.'s prediction. However, the following general problem remained unanswered.

\begin{question}[\cite{BCLT1985}*{Problem 11.6}]\label{qt:BCLT}
Assume $\XB$ is an unconditional basis of a Banach space $\XX$ having a unique unconditional basis. Is every unconditional spreading model of $\XX$ equivalent to the unit vector basis of $\ell_1$, $\ell_2$, or $c_0$?
\end{question}

To make it clear that none of the known spaces so far with a unique unconditional basis provides a negative answer to Question~\ref{qt:BCLT}, we make a detour en route to briefly discuss the spreading model structure of those spaces. Let us first state a well-known result that follows by combining the principle of small perturbations with the gliding hump technique.

\begin{lemma}\label{lem:SMBBS}
Let $\Sym$ be an unconditional spreading model of a Banach space $\XX$ with a Schauder basis $\XB$. Then $\Sym$ is associated with a block basic sequence relative to $\XB$.
\end{lemma}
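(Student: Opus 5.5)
Let $\XB=(\xx_n)_{n=1}^\infty$ be the Schauder basis, with dual basis $(\xx_n^*)$ and basis constant $K$. Let $(x_n)_{n=1}^\infty$ be a normalized good sequence generating $\Sym$. The plan is to replace $(x_n)$ by a block basic sequence $(y_k)$ that is a small perturbation of a subsequence $(x_{n_k})$. Then I will check that $(y_k)$ is itself a good sequence generating the same spreading model $\Sym$.

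The first step is to get coordinatewise nullness. Since $\Sym$ is an unconditional spreading model, $(x_n)$ is weakly null: that is the standing hypothesis of the Brunel--Sucheston construction recalled above. Hence $\lim_n \xx_j^*(x_n)=0$ for every $j$. (If one prefers not to rely on that, I would use unconditionality and suppression-$1$ of $\Sym$: $\norm{\ee_1-\ee_2}_\Sym$ is bounded below, so $(x_{2n}-x_{2n+1})$ still has a subsequence with the same spreading model up to equivalence. I expect this detour is unnecessary given how the paper defines unconditional spreading models.)

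The second step is the gliding hump. Fix $\varepsilon_k>0$ with $\sum_k \varepsilon_k<\infty$ small. We choose inductively $n_1<n_2<\dots$ and integers $0=m_0<m_1<\dots$ as follows. Given $m_{k-1}$, coordinatewise nullness lets us pick $n_k$ with $\norm{S_{m_{k-1}}[\XB](x_{n_k})}<\varepsilon_k/2$. Since $S_m[\XB]x\to x$, we then pick $m_k>m_{k-1}$ with $\norm{x_{n_k}-S_{m_k}[\XB](x_{n_k})}<\varepsilon_k/2$. Set
\[
y_k=S_{m_k}[\XB](x_{n_k})-S_{m_{k-1}}[\XB](x_{n_k}).
\]
Then $(y_k)$ is a block sequence relative to $\XB$, and $\norm{y_k-x_{n_k}}<\varepsilon_k$. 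In particular the $y_k$ are non-null, so $(y_k)$ is a block basic sequence.

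The third step, which is the only point needing care, is to check that $(y_k)$ generates $\Sym$. Fix $s=\sum_{j=1}^m a_j\ee_j$ and indices $k_1<\dots<k_m$ with $k_1\ge N$. Then
\[
\abs{\norm{\sum_j a_j y_{k_j}}-\norm{\sum_j a_j x_{n_{k_j}}}}\le \max_j\abs{a_j}\sum_{k\ge N}\varepsilon_k .
\]
This tends to $0$ as $N\to\infty$. The subsequence $(x_{n_k})$ of the good sequence is again good with the same limits. Hence the iterated limit defining $\norm{s}_\Sym$ along $(y_k)$ exists and equals the one along $(x_n)$. Therefore $(y_k)$ is a good sequence with associated spreading model $\Sym$.

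Finally, if normalized vectors are required, replace $y_k$ by $y_k/\norm{y_k}$. Since $\norm{y_k}\to1$, the same estimate shows the spreading model is unchanged. No stability argument via the principle of small perturbations is actually needed beyond this norm comparison. The main potential obstacle is only the weak nullness in the first step.
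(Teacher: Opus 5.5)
Your proposal is correct and follows the route the paper indicates (it gives no detailed proof, only that the lemma follows from the gliding hump technique combined with the principle of small perturbations). Your tail estimate $\max_j\abs{a_j}\sum_{k\ge N}\varepsilon_k$ is the precise form of the small-perturbation step, giving the same spreading model rather than merely an equivalent one; it is also worth noting that $(y_k/\norm{y_k})_{k=1}^\infty$ is weakly null, as a norm-null perturbation of $(x_{n_k})_{k=1}^\infty$, so it qualifies as a good sequence in the paper's sense.
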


Given $1\le p\le\infty$, we say that a Schauder basis $(\xx_n)_{n=1}^\infty$ is an \emph{asymptotic $\ell_p$-basis} if there is a constant $C$ such that for all $m\in\NN$ there is $k=k(m)\in\NN$ such that every $m$-tuple block relative to $(\xx_n)_{n=k}^\infty$ is $C$-equivalent to the unit vector basis of $\ell_p^m$.
By Lemma~\ref{lem:SMBBS}, the space $\ell_p$ ($c_0$ if $p=\infty$) is the unique spreading model of a Banach space with an asymptotic $\ell_p$-basis.

The canonical basis of Tsirelson's space $\Ts$ is an asymptotic $\ell_1$-basis \cite{CasShu1989}, whence the canonical basis of Tsirelson's space $\Ts^{(2)}$ is an asymptotic $\ell_2$-basis. By duality (see Lemma~\ref{lem:KrivineA}), the canonical basis of the original Tsirelson's space $\Ts^*$ is an asymptotic $\ell_\infty$-basis, and the canonical basis of $(\Ts^{(2)})^*$ is an asymptotic $\ell_2$-basis.

Given a nonincreasing function
\[
\varphi\colon[0,\infty)\to[1,\infty)
\]
with $\lim_{s\to\infty} \varphi(s)=1$ we set
\[
F(t)=\exp\enpar{-\int_0^{-\log(t)} \varphi(s)\, ds}, \quad 0\le t \le 1.
\]
The spaces from \cite{CasKal1998} with a unique unconditional basis which have a complemented subspace with an unconditional basis that is not unique are subspaces of Orlicz sequence spaces $\ell_{F}$, with $F$ as above, spanned by normalized block basic sequences $(x_n)_{n=1}^\infty$ with
\[
\lim_n \norm{x_n}_\infty=0.
\]
These spaces are isometrically isomorphic to Musielak-Orlicz sequence spaces associated with sequences $(F_n)_{n=1}^\infty$ such that
\begin{equation}\label{eq:OrliczNakanoUTAP}
\lim_n F_n(t)=t \mbox{ uniformly on $[0,1/2]$.}
\end{equation}
It can be proved that the canonical bases of Musielak-Orlicz spaces $\ell(F_n)$ with $(F_n)_{n=1}^\infty$ satisfying \eqref{eq:OrliczNakanoUTAP} are asymptotic $\ell_1$-bases.

Cassaza and Kalton \cite{CasKal1998} proved that the Nakano spaces $\ell(p_{n})$ associated with a sequence $(p_n)_{n=1}^\infty$ converging to one or to infinity have a unique unconditional basis in the case when $\ell(p_{n})$ is lattice isomorphic to its square. If $(p_n)_{n=1}^\infty$ converges to one, the spaces $\ell(p_{n})$ are Musielak-Orlicz sequence spaces associated with sequences $(F_n)_{n=1}^\infty$ satisfying \eqref{eq:OrliczNakanoUTAP}, whence their canonical bases are asymptotic $\ell_1$-bases. By duality, the canonical basis of Nakano spaces $\ell(p_{n})$ associated with a sequence $(p_n)_{n=1}^\infty$ converging to infinity are asymptotic $\ell_\infty$-bases.

To describe the spreading model structure of direct sums of spaces with a unique unconditional basis we need some extra machinery. Let $(\KK, \norm{\cdot}_{\KK})$ be an atomic lattice over a coutable set $\Nt$, $\Mt\subseteq\Nt$, $(\Sym_n,\norm{\cdot}_{\Sym_n})_{n\in\Mt}$ be a countable family of subsymmetric sequence spaces, and $(\lambda_n)_{n\in\Mt}\in (0,\infty)^{\Mt}$ belong unit sphere of $\KK[\Mt]$. Then
\[
\Sym:=\enpar{\bigcap_{n\in\Mt} \lambda_n\, \Sym_n}_{\KK}=\enbrace{ \alpha \in\FF^{\NN} \colon \norm{\enpar{\lambda_n \norm{\alpha}_{\Sym_n}}_{n\in\Mt}}_{\KK[\Mt]}<\infty}
\]
is a subsymmetric sequence space. If $\Mt$ is finite, then different lattices $\KK$ and different families $\blambda$ just yield different equivalent norms on the same vector space. So, in this case we simply put $\Sym=\cap_{n\in\Mt} \Sym_n$.

\begin{lemma}\label{lem:SPDirectSum:Z}
Let $(\XX_n)_{n\in\Nt}$ be a family of Banach spaces and $\KK$ be a minimal atomic lattice over $\Nt$. Let $\Sym$ be an unconditional spreading model of associated with weakly null good sequence $(x_j)_{j=1}^\infty$ in
\[
\XX:=\enpar{\bigoplus_{n\in\Nt} \XX_n}_{\KK}.
\]
Let $S_{\Jt}\colon\XX\to\XX$ denote the coordinate projection associated with $\Jt\subseteq\Nt$, and for each $n\in\Nt$, let $P_n\colon\XX\to\XX_n$ be the canonical coordinate map.
Assume that for every $\varepsilon>0$ threre is $\Jt\subseteq\Nt$ finite such that
\[
\enbrace{j\in\NN \colon \norm{x_j-S_{\Jt}(x_j)}>\varepsilon}
\]
is finite. Then there are $\Mt\subseteq\Nt$, $(\lambda_n)_{n\in\Mt}\in (0,\infty)^{\Mt}$ in the unit sphere of $\KK[\Mt]$, and for each $n\in\Mt$ an unconditional spreading model of $\XX_n$ such that $\Sym=\enpar{\cap_{n\in\Mt} \lambda_n\, \Sym_n}_{\KK}$. Besides, if we put $\lambda_n=0$ for all $n\in\Nt\setminus\Mt$, there is a subsequence $(y_j)_{j=1}^\infty$ of $(x_j)_{j=1}^\infty$ such that
\[
\lim_j \norm{P_n(y_j)}=\lambda_n, \quad n\in\Nt.
\]
\end{lemma}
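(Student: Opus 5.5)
We will pass to subsequences via a diagonal argument, reduce to a finite set of coordinates using the tightness hypothesis, and then compute the spreading model norm coordinatewise.

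\emph{Step 1: extracting the limits $\lambda_n$ and coordinate spreading models.} Since $(x_j)$ is normalized and each $P_n$ is bounded, $(\norm{P_n(x_j)})_j$ is bounded for every $n\in\Nt$. As $\Nt$ is countable, a diagonal argument gives a subsequence $(y_j)$ such that $\lambda_n:=\lim_j\norm{P_n(y_j)}$ exists for every $n$. Set $\Mt=\{n\colon \lambda_n>0\}$.

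For $n\in\Mt$ the sequence $(P_n(y_j))_j$ is weakly null in $\XX_n$, since $P_n$ is bounded. It is also eventually bounded below. By Brunel--Sucheston \cite{BL1983}, applied after normalizing and combined with a further diagonal extraction over $n\in\Mt$, we may assume that $P_n(y_j)/\norm{P_n(y_j)}$ is a good sequence for every $n\in\Mt$. Its unconditional spreading model will be our $\Sym_n$. The spreading model of $(y_j)$ is still $\Sym$, because passing to a subsequence of a good sequence does not change it.

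\emph{Step 2: the tail is negligible.} We use the hypothesis in its natural reading: for every $\varepsilon>0$ there is a finite $\Jt$ with $\norm{y_j-S_\Jt(y_j)}\le\varepsilon$ for all but finitely many $j$. Fix a finite $\Jt$. For large $j$, $S_\Jt(y_j)$ has coordinates close in norm to $\lambda_n$ for $n\in\Jt$, and
\[
\norm{S_\Jt(y_j)}=\norm{(\norm{P_n(y_j)})_{n\in\Jt}}_\KK\to\norm{(\lambda_n)_{n\in\Jt}}_\KK .
\]
Letting $\Jt$ increase and using $\norm{y_j}=1$ together with the $\varepsilon$-approximation, we get $\norm{(\lambda_n)_{n\in\Nt}}_\KK=1$. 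Minimality of $\KK$ is used here, so that the norm of $(\lambda_n)$ is the limit of the norms over the finite sets $\Jt$. The same estimate for linear combinations gives, for $\alpha=(a_i)_{i=1}^m$ and $j_1<\dots<j_m$ large,
\[
\Bigl|\,\norm{\sum_i a_i y_{j_i}}-\norm{S_\Jt\sum_i a_i y_{j_i}}\,\Bigr|\le \varepsilon\sum_i\abs{a_i}.
\]

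\emph{Step 3: computing the norm on finitely many coordinates.} Next,
\[
\norm{S_\Jt\sum_i a_iy_{j_i}}=\norm{\Bigl(\norm{\sum_i a_iP_n(y_{j_i})}\Bigr)_{n\in\Jt}}_\KK .
\]
As $j_1<\dots<j_m\to\infty$, each inner norm tends to $\lambda_n\norm{\alpha}_{\Sym_n}$ for $n\in\Mt$. For $n\notin\Mt$ it tends to $0$, because $\norm{P_n(y_j)}\to0$. Hence
\[
\norm{\alpha}_\Sym=\lim_{\Jt}\norm{(\lambda_n\norm{\alpha}_{\Sym_n})_{n\in\Jt\cap\Mt}}_\KK=\norm{(\lambda_n\norm{\alpha}_{\Sym_n})_{n\in\Mt}}_{\KK[\Mt]},
\]
where the last equality again uses minimality, i.e.\ order continuity of $\KK$. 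This is the claimed identity $\Sym=\enpar{\cap_{n\in\Mt}\lambda_n\Sym_n}_\KK$.

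\emph{Main obstacle.} The delicate point is the interchange of limits in Step 3. The error $\varepsilon\sum\abs{a_i}$ is uniform only for fixed $\alpha$, and the finite set $\Jt$ depends on $\varepsilon$. We therefore fix $\alpha$ first, then $\varepsilon$ and $\Jt$, and take the spreading limit before letting $\varepsilon\to0$. The $\KK$-norm over the finite set $\Jt$ is continuous in finitely many coordinates, so the limit in Step 3 is legitimate.
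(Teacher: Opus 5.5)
Your proposal is correct and follows essentially the same route as the paper: a diagonal extraction of the limits $\lambda_n$ and of good normalized coordinate sequences $P_n(y_j)/\norm{P_n(y_j)}$ (giving the $\Sym_n$), then the tightness hypothesis to replace $\norm{\sum_i a_i y_{j_i}}$ by the $\KK$-norm over a finite set $\Jt$ up to an error $\varepsilon\norm{\alpha}_1$, and finally letting $\Jt$ exhaust $\Nt$. The only difference is the order of limits. The paper builds finite sets $\Jt_k$ increasing to $\Nt$ into a further subsequence and uses a single approximating $m$-tuple, with error $(2+\norm{\alpha}_1)\varepsilon_k$. You instead take the spreading limit for a fixed $\Jt$ first, and both orders are legitimate.
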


\begin{proof}
Using Cantor's diagonal technique, by passing to a subsequence we can assume that
$\enpar{\norm{P_n(x_j)}}_{j=1}^\infty$ converges to some $\lambda_n\in[0,\infty)$ for each $n\in\Nt$.
Set
\[
\Mt=\enbrace{n\in\Nt \colon \lambda_n\not=0}.
\]
Passing to a further subsequence, again by Cantor's diagonal technique, we can assume that for each $n\in\Mt$
\[
\enpar{\frac{P_n(x_j))}{ \norm{P_n(x_j)}}}_{j=1}^\infty
\]
is a good sequence associated with an unconditional spreading model $\Sym_n$ over $\XX_n$. Pick a null sequence $(\varepsilon_k)_{k=1}^\infty$ in $(0,\infty)$. Passing to a further subsequence, we recursively construct $(\Jt_k)_{n=1}^\infty$ consisting of finite sets increasing to $\Nt$ such that
\[
\norm{(x_j -S_{\Jt_k}(x_j)}\le \varepsilon_k, \quad j\ge k.
\]
Fix $k\in\NN$ and $\alpha=(a_i)_{i=1}^\infty\in c_{00}$. Let $m\in\NN$ be such that $a_i=0$ for $i>m$. There is an increasing $m$-tuple $(j_i)_{i=1}^m$ in $\NN$ such that
\begin{align*}
\abs{ \norm{\alpha}_\Sym - \norm{\sum_{i=1}^m a_i \ x_{j_i}}} &\le \varepsilon_k,\\
\abs{ \lambda_n \norm{\alpha}_{\Sym_n} - \norm{\sum_{i=1}^m a_i\, P_n\enpar{x_{j_i}}}}&\le\frac{\varepsilon_k}{\norm{\sum_{n\in\Jt_k} \ee_n}_{\KK}}, \quad n\in\Jt_k,\\
\norm{x_{j_i} - S_{\Jt_k}(x_{j_i})}&\le \varepsilon_k.
\end{align*}
Summing up,
\[
\abs{ \norm{\alpha}_\Sym - \norm{\sum_{n\in\Jt_k} \lambda_n \norm{\alpha}_{\Sym_n} \ee_n }_{\KK}}\le \enpar{2+\norm{\alpha}_1} \varepsilon_k.
\]
Letting $k$ tend to infinity and $\alpha$ run over $c_{00}$ we are done.
\end{proof}

Lemma~\ref{lem:SPDirectSum:Z} applies in particular to finite families, in which case it gives the following.

\begin{proposition}\label{prop:SPDirectSum:A}
Let $(\XX_n)_{n\in\Nt}$ be a finite family of Banach spaces. If $\Sym$ is an unconditional spreading model of $\oplus_{n\in\Nt} \XX_n$, then there is $\Jt\subseteq\Nt$ and for each $n\in\Jt$ an unconditional spreading model $\Sym_n$ of $\XX_n$ such that $\Sym=\cap_{n\in\Jt} \Sym_n$.
\end{proposition}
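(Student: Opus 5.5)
The plan is to obtain the proposition as the finite case of Lemma~\ref{lem:SPDirectSum:Z}. We regard $\oplus_{n\in\Nt}\XX_n$ as $\enpar{\bigoplus_{n\in\Nt}\XX_n}_{\KK}$ with $\KK=\ell_1(\Nt)$, or any norm on $\FF^{\Nt}$. Since $\Nt$ is finite, all these norms are equivalent, and $\KK$ is trivially a minimal atomic lattice. Passing to an equivalent norm changes neither the notion of an unconditional spreading model up to equivalence nor the final identity $\Sym=\cap_{n\in\Jt}\Sym_n$. The paper itself notes that for finite index sets this identity is understood up to equivalent norms.

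Next we check the hypotheses of Lemma~\ref{lem:SPDirectSum:Z}. Let $\Sym$ be associated with a weakly null good sequence $(x_j)_{j=1}^\infty$; this is the framework in which unconditional spreading models are defined. The tail condition holds trivially: we take $\Jt=\Nt$, which is finite, so $S_{\Jt}=\Id$ and $\norm{x_j-S_{\Jt}(x_j)}=0$ for every $j$. Hence the set of exceptional indices is empty for every $\varepsilon>0$.

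The lemma then provides $\Mt\subseteq\Nt$, scalars $\lambda_n>0$, and unconditional spreading models $\Sym_n$ of $\XX_n$ for $n\in\Mt$ such that $\Sym=\enpar{\cap_{n\in\Mt}\lambda_n\Sym_n}_{\KK}$. We set $\Jt:=\Mt$. Because $\Mt$ is finite and each $\lambda_n$ is strictly positive, the norm $\norm{(\lambda_n\norm{\alpha}_{\Sym_n})_{n\in\Mt}}_{\KK[\Mt]}$ is equivalent to $\max_{n\in\Mt}\norm{\alpha}_{\Sym_n}$. The constants depend only on $\min_n\lambda_n$, $\max_n\lambda_n$, and $\abs{\Mt}$. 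This gives exactly $\Sym=\cap_{n\in\Jt}\Sym_n$, as the paper's convention stipulates.

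There is no real obstacle. The only point needing care is the degenerate case $\Mt=\emptyset$, which cannot occur. Indeed, the limits $\lambda_n=\lim_j\norm{P_n(x_j)}$ satisfy $\norm{(\lambda_n)_n}_{\KK}=\lim_j\norm{x_j}=1$, because $\Nt$ is finite and $(x_j)_{j=1}^\infty$ is normalized. So at least one $\lambda_n$ is nonzero, and this is also what places $(\lambda_n)_{n\in\Mt}$ in the unit sphere, as the lemma asserts.
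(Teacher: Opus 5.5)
Your proposal is correct and takes the same route as the paper, which obtains this proposition simply as the finite-index case of Lemma~\ref{lem:SPDirectSum:Z}. There the tail hypothesis is vacuous (take $\Jt=\Nt$), and for finite $\Mt$ the weighted intersection $\enpar{\cap_{n\in\Mt}\lambda_n\Sym_n}_{\KK}$ is, by the paper's convention, identified with $\cap_{n\in\Mt}\Sym_n$ up to an equivalent norm. Your extra check that $\Mt\neq\emptyset$ is a worthwhile detail the paper leaves implicit: it follows from $\norm{(\lambda_n)_{n}}_{\KK}=\lim_j\norm{x_j}=1$, provided $\KK$ is taken to be the lattice norm actually defining the finite sum rather than an arbitrary equivalent one.
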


We also give a result concerning infinite direct sums.
\begin{proposition}\label{prop:DirectSumB}
Let $p\in[1,\infty]$ and $(\KK_n)_{n\in\Nt}$ be a countable family of minimal atomic lattices satisfying an uniform upper $p$-estimate. Let $\KK$ be a minimal atomic lattice over $\Nt$.
Let $\Sym$ be an unconditional spreading model of
\[
\XX:=\enpar{\bigoplus_{n\in\Nt} \KK_n}_{\KK}.
\]
\begin{enumerate}[label=(\roman*), leftmargin=*, widest=ii]
\item\label{it:SMA} Assume that $\ell_p$ (take $c_0$ if $p=\infty$) is the unique unconditional spreading model of $\KK$. Then either $\Sym=\ell_p$, or there are $\Mt\subseteq\Nt$, $(\lambda_n)_{n\in\Nt}\in(0,\infty)^\Nt$ in the unit sphere of $\KK[\Mt]$, and for each $n\in\Mt$ an unconditional spreading model $\Sym_n$ of $\KK_n$ such that $\Sym=\enpar{\cap_{n\in\Nt} \lambda_n\, \Sym_n}_{\KK}$.
\item\label{it:SMB} Assume that for each $n\in\Nt$ the space $\ell_p$ is the unique unconditional spreading model of $\KK_n$. Then either $\Sym=\ell_p$ or $\Sym$ is a spreading model of $\KK$.
\end{enumerate}
\end{proposition}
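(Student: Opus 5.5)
The plan is a dichotomy on the good sequence $(x_j)_{j=1}^\infty$ generating $\Sym$. Either the tightness hypothesis of Lemma~\ref{lem:SPDirectSum:Z} holds, or it fails. When it holds, Lemma~\ref{lem:SPDirectSum:Z} directly gives the representation $\Sym=(\cap_{n\in\Mt}\lambda_n\Sym_n)_{\KK}$. That representation is the second alternative in \ref{it:SMA}. For \ref{it:SMB}, each $\Sym_n$ is then $\ell_p$ (respectively $c_0$), so the norm of $\alpha$ is $\norm{\alpha}_{\ell_p}\norm{(\lambda_n)_{n\in\Mt}}_{\KK[\Mt]}=\norm{\alpha}_{\ell_p}$, and hence $\Sym=\ell_p$.

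Throughout, we reduce to the case where $(x_j)_{j=1}^\infty$ is a weakly null normalized block basic sequence relative to the canonical lattice basis of $\XX$, which is a minimal atomic lattice over $\sqcup_n \At_n$. This reduction uses Lemma~\ref{lem:SMBBS}, after ordering the index set as $\NN$. By the usual splitting argument, spreading models not coming from weakly null sequences would be $\ell_1$ and are dealt with trivially; I will gloss over this point.

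\emph{Case where tightness fails.} Fix $\varepsilon>0$ such that for every finite $\Jt\subseteq\Nt$ there are infinitely many $j$ with $\norm{x_j-S_\Jt(x_j)}>\varepsilon$. Combining a gliding hump over the outer index with the block structure, I pass to a subsequence and perturb it into a sequence $z_j=u_j+v_j$. Here the $u_j$ are supported on pairwise disjoint finite sets $\Jt_j$ of outer indices with $\norm{v_j}\to 0$ in the appropriate sense, or more precisely a sequence in which the "tail parts" are disjoint. There is also a residual part $w_j=S_{\Jt}(x_j)$ that stabilizes coordinatewise. This step is the main obstacle: one has to split each $x_j$ into a part living on a fixed finite set of outer coordinates and a part escaping to infinity, with norms bounded below. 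I would first pass to a subsequence so that $\norm{P_n(x_j)}\to\lambda_n$ for each $n$, as in the proof of Lemma~\ref{lem:SPDirectSum:Z}. I would then choose finite $\Jt_k\uparrow\Nt$ so that $x_j=S_{\Jt_k}x_j+(x_j-S_{\Jt_k}x_j)$, where the second piece has disjoint outer supports along a further subsequence and norm at least about $\varepsilon$.

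\emph{Handling the escaping part.} Each escaping piece sits in $(\oplus_{n\in F_j}\KK_n)_\KK$ with the sets $F_j$ pairwise disjoint. For \ref{it:SMA}, the escaping pieces behave like disjoint vectors of $\KK$: their norms are $\norm{(\norm{P_n y}_{\KK_n})_n}_\KK$, so their spreading model is that of the sequence $(\norm{P_n(y_j)})_n$ in $\KK$, which is $\ell_p$ by hypothesis. For \ref{it:SMB}, one argues symmetrically; here the uniform upper $p$-estimate on the $\KK_n$ is needed to control sums of pieces within each $\KK_n$. The combined spreading model of $x_j=w_j+y_j$ is then of the form $(\cap\lambda_n\Sym_n)_\KK\cap\ell_p$ in \ref{it:SMA}. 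I then need to show that the $\ell_p$ part dominates, so that $\Sym=\ell_p$. This uses the upper $p$-estimate: each $(\cap\lambda_n\Sym_n)_\KK$ with $\Sym_n$ a spreading model of a lattice with an upper $p$-estimate is dominated by $\ell_p$. The lower bound comes from the escaping part of norm at least $\varepsilon$ via unconditionality, which gives $\Sym\approx\ell_p$.

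In \ref{it:SMB}, the nonescaping part yields $\Sym_n=\ell_p$ coordinatewise, hence $\ell_p$. The escaping part yields a spreading model of $\KK$, since each $\KK_n$ block only contributes a scalar norm, and the $\ell_p$ spreading model of the $\KK_n$ lets us dominate it. So $\Sym$ is either $\ell_p$ or a spreading model of $\KK$.
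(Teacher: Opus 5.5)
Your tight case is the paper's argument: Lemma~\ref{lem:SPDirectSum:Z}, plus, for (ii), the remark that $(\cap_{n\in\Mt}\lambda_n\Sym_n)_{\KK}=\ell_p$ when every $\Sym_n=\ell_p$. Your lower bound in the non-tight case is also the paper's. Since the $x_j$ are disjointly supported, restricting $\sum_k a_k x_{j_k}$ to the supports of the escaping pieces $y_{j_k}=S_{\Nt_{j_k}}(x_{j_k})$ gives $\norm{\alpha}_\Sym\gtrsim\norm{\alpha}_{\Sym_0}$ for a spreading model $\Sym_0$ of $\KK$.

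The gap is the upper bound in (i). You derive it from the claim that the spreading model of $x_j=w_j+y_j$ is $(\cap_n\lambda_n\Sym_n)_{\KK}\cap\ell_p$. You never prove this, and it is essentially the whole difficulty. Write $\lambda=(\lambda_n)_{n\in\Nt}$. For a fixed finite $\Jt$, the remainder $x_j-S_{\Jt}(x_j)-y_j$ does not tend to zero when $\lambda$ has infinite support. If instead $\Jt$ grows with $j$, the stabilized part of $x_{j_k}$ lives on the same outer coordinates as the escaping parts of $x_{j_1},\dots,x_{j_{k-1}}$, so the $\KK$-profile of $\sum_k a_k x_{j_k}$ does not split as you assume.

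No formula for $\Sym$ is needed. The paper simply sandwiches $\ell_p\subseteq\Sym\subseteq\Sym_0=\ell_p$, taking $\norm{\alpha}_\Sym\lesssim\norm{\alpha}_p$ from an upper $p$-estimate on $\XX$. Be aware that estimates on the $\KK_n$ alone do not give such an estimate for $\XX$ (think of $\ell_r(\ell_p)$ with $r<p$). Under the hypothesis of (i), though, the asymptotic bound can be proved directly:
\begin{itemize}
\item with $g_j=(\norm{P_n(x_j)})_{n\in\Nt}$, the uniform upper $p$-estimate gives $\norm{\sum_k a_k x_{j_k}}\le C\norm{(\sum_k\abs{a_k}^p g_{j_k}^p)^{1/p}}_{\KK}$;
\item a gliding hump gives $g_{j_k}\le\lambda\chi_F+h_k+r_k$, with $(h_k)_k$ disjoint, $\sum_k\norm{r_k}$ small, and $\norm{\lambda\chi_F}_{\KK}\le 1$;
\item pointwise Minkowski then bounds the right-hand side by a multiple of $\norm{\alpha}_p+\norm{\sum_k\abs{a_k}h_k}_{\KK}$, which is $\lesssim\norm{\alpha}_p$ because $\ell_p$ is the spreading model of $\KK$.
\end{itemize}

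For (ii) you do not treat the mixed case, where tightness fails but some $\lambda_n>0$. Your assertion that the $\ell_p$ spreading model of the $\KK_n$ lets you dominate the escaping part has no basis: that part produces a spreading model of $\KK$, about which (ii) assumes nothing. The paper splits according to $\lambda$ instead.
\begin{itemize}
\item If $\lambda_n=0$ for every $n$, a gliding hump yields $\sum_j\norm{x_j-y_j}<\infty$, so $\Sym=\Sym_0$ is a spreading model of $\KK$.
\item If $\lambda_n>0$ for some $n$, projecting onto the $n$th summand (the paper applies Lemma~\ref{lem:SPDirectSum:Z} to $\KK_n$ and its complement) gives $\norm{\alpha}_\Sym\gtrsim\norm{\alpha}_{\Sym_n}=\norm{\alpha}_p$, and the upper $p$-estimate on $\XX$ finishes.
\end{itemize}

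That last step really needs an assumption on $\KK$. Take $\ell_r(\ell_2)$ with $1<r<2$ and $p=2$, and let $e^{(n)}_j$ be the $j$th unit vector of the $n$th copy of $\ell_2$. The weakly null normalized sequence $x_j=2^{-1/r}(e^{(1)}_j+e^{(j+1)}_j)$ has $\lambda_1>0$ and is not tight. Yet $\norm{\alpha}_\Sym=2^{-1/r}(\norm{\alpha}_2^r+\norm{\alpha}_r^r)^{1/r}\approx\norm{\alpha}_r$.

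The conclusion of (ii) survives in this example, since $\ell_r$ is a spreading model of $\KK$. But in the mixed case neither your sketch nor the paper's argument goes through unless $\KK$ also satisfies an upper $p$-estimate. With that assumption, the Minkowski argument above gives $\norm{\alpha}_\Sym\lesssim\norm{\alpha}_p$, and hence $\Sym=\ell_p$.
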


\begin{proof}
By Lemma~\ref{lem:SMBBS}, $\Sym$ is associated with a pairwise disjoint sequence $(x_j)_{j=1}^\infty$. Since $\XX$ is an atomic lattice satisfying an upper $p$-estimate, $\ell_p\subseteq\Sym$. By Cantor's diagonal argument we can assume that there is $(\lambda_n)_{n\in\Nt}$ in $[0,\infty)$ such that
\[
\lim_j \norm{P_n(x_j)}=\lambda_n, \quad n\in\Nt,
\]
where $P_n\colon\XX\to\XX_n$ be the canonical coordinate map.
By Lemma~\ref{lem:SPDirectSum:Z}, we can assume that there is $\varepsilon>0$ such that for all $\Jt\subseteq\Nt$ finite and all $j_0\in\NN$ there is $j>j_0$ such that $\norm{S_{\Jt}(x_j)}>c$. Passing to a subsequence, we find a pairwise disjoint sequence $(\Nt_k)_{k=1}^\infty$ in $\Nt$ such that $\norm{y_j}\ge c$ for all $j\in\NN$, where
\[
y_j = S_{\Nt_j}(x_j) .
\]
Besides, if
\begin{enumerate}[label=(A)]
\item\label{ExtraCondition} $\lambda_n=0$ for all $n\in\Nt$,
\end{enumerate}
then, by the gliding hump technique we can get
\[
\sum_{j=1}^\infty \norm{x_j-y_j}<\infty.
\]

Since $(y_j)_{j=1}^\infty$ is isometrically equivalent to a sequence in $\KK$, passing to a further subsequence we can assume that there is an unconditional spreading model $\Sym_0$ of $\KK$ associated with $(y_j/\norm{y_j})_{j=1}^\infty$. Pick $\varepsilon>0$ and $\alpha=(a_k)_{k=1}^\infty$ in $c_{00}$. Let $m\in\NN$ be such that $a_n=0$ for $k>n$. There is an incresing $m$-tuple $(j_k)_{k=1}^m$ in $\NN$ such that
\[
\abs{\norm{\alpha}_\Sym- \norm{ \sum_{k=1}^m a_k x_{j_k}}}<\varepsilon,
\quad \abs{ \norm{\alpha}_{\Sym_0}- \norm{ \sum_{k=1}^m a_k \frac{y_{j_k}}{\norm{y_{j_k}} }}}<\varepsilon.
\]
Since
\begin{align*}
\norm{ \sum_{k=1}^m a_k x_{j_k}}
&\ge \norm{ \sum_{i=1}^m \sum_{k=1}^m a_k S_{\Nt_{j_i}} (x_{j_k})}\\
&\ge \norm{ \sum_{i=1}^m a_i S_{\Nt_{j_i}} (x_{j_i})} \\
&\ge c \norm{\sum_{i=1}^m a_i \frac{y_{j_i}}{\norm{y_{j_i}}}},
\end{align*}
we get
\[
\norm{\alpha}_\Sym \ge c \norm{\alpha}_{\Sym_0}-(1+c)\varepsilon.
\]
Letting $\varepsilon$ go to zero and making $\alpha$ run over all the sequences in $c_{00}$, yields $\Sym\subseteq\Sym_0$. This embedding puts an end to the proof of \ref{it:SMA}.

Let us prove \ref{it:SMB}. If \ref{ExtraCondition} holds, then $\Sym=\Sym_0$ be the principle of small perturbations.
If, on the contrary, there is $n\in\Nt$ such that $\lambda_n>0$, we apply Lemma~\ref{lem:SPDirectSum:Z} to the direct sum of $\XX_n$ and its obvious complementary space. This way we infer the existence of unconditional spreading models $\Sym_n$ and $\Sym_1$ of $\KK_n$ and $\XX$, respectively, such that either $\Sym=\Sym_n$ or $\Sym=\Sym_n\cap \Sym_1$. In any case, $\Sym\subseteq\Sym_n$.
\end{proof}

It is known \cite{AlbiacAnsorena2022PAMS}*{Theorem 4.4} that any finite direct sum built from $c_0$, $\ell_1$, $\ell_2$, $\Ts$, $\Ts^*$, $\Ts^{(2)}$ and $(\Ts^(2))^*$ has a unique unconditional basis. By Proposition~\ref{prop:SPDirectSum:A}, only $c_0$, $\ell_1$, and $\ell_2$ can be unconditional spreading models of these direct sums. By Proposition~\ref{prop:DirectSumB}, $\ell_p$ and $\ell_q$ are the unique unconditional spreading models of $Z_{p,q}$, $p$, $q\in[1,\infty]$. In particular, any unconditional spreading model of $c_0(\ell_1)$, $\ell_1(c_0)$, $c_0(\ell_2)$ or $\ell_1(\ell_2)$ is $c_0$, $\ell_1$ or $\ell_2$.

Those who still expect nice universal structural results for Banach spaces would be tempted to conjecture that the answer to Question~\ref{qt:BCLT} is positive; however, as we will see in Section~\ref{sect:main}, this is not the case.
\subsection{The hyperplane problem and related questions}
Given a Banach space $\XX$, there is $h=h(\XX)\in\NN_0$ such that
\[
\enbrace{m\in\NN \colon \XX\oplus \FF^m \simeq \XX}=\enbrace{hk\colon k\in\NN}
\]
(see \cite{AAW2022}*{Lemma 2.8}).

\begin{definition} We say that a Banach space $\XX$ is \emph{hyperplane stable} if $h(\XX)=1$, i.e., $\XX\oplus \FF \simeq \XX$. If $h(\XX)>0$, i.e., there is $h\in\FF$ such that $\XX\simeq\XX\oplus \FF^h$ we say that $\XX$ is \emph{hyperplane-sum stable}.
\end{definition}

Similarly, there is $s=s(\XX)$ such that
\[
\enbrace{m\in\NN \colon \XX^m \simeq\XX}=\enbrace{1+sk\colon k\in\NN}.
\]

\begin{definition}
A Banach space $\XX$ is \emph{square stable} if $s(\XX)=1$, i.e., $\XX^2\simeq\XX$. If $s(\XX)>0$, i.e., there is $m\in\NN\cap[2,\infty)$ such that $\XX^m \sim \XX$, we say that $\XX$ is \emph{power stable}.
\end{definition}

Most spaces that arise naturally in functional analysis are both hyperplane stable and square stable. The first counterexample to this paradigm goes back to \cite{James1964}, where James provided a Banach space $\JJ$ with $s(\JJ)=0$ and $h(\JJ)=1$.

The space $\GG$ constructed by Gowers \cite{Gowers1994hyp} witnesses that the answer to the Banach's hyperplane problem is negative, that is, there is a hyperplane nonstable Banach space. In fact, Gowers \cite{Gowers1994hyp} proved that $h(\GG)=0$ and, later on, Gowers and Maurey \cite{GowersMaurey1997} proved that $s(\GG)=0$. So, Gowers space $\GG$ witnesses the existence of Banach spaces that are neither hyperplane stable nor square stable.

Gowers went on in \cite{Gowers1994} to build a Banach space $\MM$ with $s(\MM)=2$. Subsequently, Gowers and Maurey \cite{GowersMaurey1997} constructed for each $m\in\NN$ a Banach space $\Sym_m$ with $s(\Sym_m)=m$, and another Banach space $\HH_m$ with $h(\HH_m)=m$. In the same paper, Gowers and Maurey conjectured that $h(\Sym_m)=0$ for all $m\in\NN$. To the best of our knownledge, this problem is still open. Similarly, we do not know whether the spaces $\HH_m$, $m\in\NN$, are square stable. In fact, it seems that the next question is still open.

\begin{question}
Is there a square stable Banach space failing to be hyperplane stable?
\end{question}

The Schr\"oder--Bernstein's problem for Banach spaces asks whether, assuming $\XX$ and $\YY$ are Banach spaces with $\XX\trianglelefteq \YY \trianglelefteq \XX$, then $\XX\simeq\YY$. Gowers space $\MM$ from \cite{Gowers1994} and Gowers--Maurey spaces $\HH_n$ and $\Sym_n$, $n\ge 2$, from \cite{GowersMaurey1997} proved that it is not true in general. In contrast, a Schr\"oder-Bernstein type principle for unconditional bases does hold, and its validity has been thoroughly employed in the structure theory of bases.

\begin{theorem}[\cite{Wojtowicz1988}*{Corollary 1}]\label{thm:SBUB}
Let $\XB$ and $\YB$ be unconditional bases of Banach spaces $\XX$ and $\YY$, respectively. Suppose that $\XB\subsetsim \YB$ and $\YB\subsetsim \XB$. Then $\XB\sim\YB$.
\end{theorem}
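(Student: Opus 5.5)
The approach is the classical Cantor--Bernstein decomposition trick (Pełczyński's method), carried out at the level of index sets rather than spaces, with unconditionality used to turn the resulting set partition into an equivalence of bases. Write $\XB=(\xx_n)_{n\in\Nt}$ and $\YB=(\yy_m)_{m\in\Mt}$. By hypothesis there are injections $\pi\colon\Nt\to\Mt$ and $\sigma\colon\Mt\to\Nt$ such that $\XB$ is equivalent to $(\yy_{\pi(n)})_{n\in\Nt}$ and $\YB$ is equivalent to $(\xx_{\sigma(m)})_{m\in\Mt}$.

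First, I will apply the set-theoretic Schröder--Bernstein argument to $\pi$ and $\sigma$. This yields partitions $\Nt=\Nt_1\sqcup\Nt_2$ and $\Mt=\Mt_1\sqcup\Mt_2$ with the following properties:
\begin{itemize}
\item $\pi$ restricts to a bijection from $\Nt_1$ onto $\Mt_1$;
\item $\sigma$ restricts to a bijection from $\Mt_2$ onto $\Nt_2$.
\end{itemize}
Concretely, $\Nt_2$ consists of the points whose backward chain under the alternating maps $\sigma^{-1},\pi^{-1}$ terminates in $\Mt\setminus\pi(\Nt)$. Next I will check that restricting an equivalence to a subfamily preserves it. Then $(\xx_n)_{n\in\Nt_1}$ is equivalent to $(\yy_{\pi(n)})_{n\in\Nt_1}$, which is a rearrangement of $(\yy_m)_{m\in\Mt_1}$. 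Likewise $(\yy_m)_{m\in\Mt_2}$ is equivalent to $(\xx_{\sigma(m)})_{m\in\Mt_2}$, a rearrangement of $(\xx_n)_{n\in\Nt_2}$.

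Finally, I will glue the pieces together. Since $\XB$ is unconditional, the projections $S_{\Nt_1}[\XB]$ and $S_{\Nt_2}[\XB]$ are bounded, so $\XX=[\XB|\Nt_1]\oplus[\XB|\Nt_2]$ as a topological direct sum, and the same holds for $\YY$ with $\Mt_1,\Mt_2$. Define $T$ on $[\XB|\Nt_1]$ as the isomorphism sending $\xx_n\mapsto\yy_{\pi(n)}$, and on $[\XB|\Nt_2]$ as the inverse of the isomorphism sending $\yy_m\mapsto\xx_{\sigma(m)}$. Then $T=T_1S_{\Nt_1}+T_2S_{\Nt_2}$ is bounded, with bounded inverse $T_1^{-1}S_{\Mt_1}+T_2^{-1}S_{\Mt_2}$. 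It maps $\xx_n$ to $\yy_{\tau(n)}$, where $\tau=\pi$ on $\Nt_1$ and $\tau=\sigma^{-1}$ on $\Nt_2$, and $\tau\colon\Nt\to\Mt$ is a bijection. Hence $\XB\sim\YB$.

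The only point needing care is the use of the bounded coordinate projections in the gluing step. This is exactly where unconditionality is indispensable; the argument fails for mere Schauder bases, since a rearranged subsequence of a Schauder basis need not span a complemented subspace. Everything else is routine bookkeeping.
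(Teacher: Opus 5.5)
Your proof is correct. The paper gives no argument of its own here; it simply quotes \cite{Wojtowicz1988}*{Corollary 1}. Your scheme is the standard proof of that result: a Cantor--Bernstein partition of the index sets, after which the two restricted equivalences are glued together using the bounded coordinate projections $S_{\Nt_j}[\XB]$ and $S_{\Mt_j}[\YB]$ that unconditionality provides.

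The only imprecision is cosmetic. Your description of $\Nt_2$ does not mention points whose backward chain never terminates. With your definition they fall into $\Nt_1$, which is harmless once you set $\Mt_1=\pi(\Nt_1)$ and $\Mt_2=\Mt\setminus\Mt_1$.
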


Another remarkable result related to the Schr\"oder--Bernstein's principle for unconditional bases is the \emph{power cancelling principle}, which we next record.

\begin{theorem}[\cite{AlbiacAnsorena2022b}*{Theorem 2.3}]\label{thm:SCUB}
Let $\XB$ and $\YB$ be unconditional bases of Banach spaces $\XX$ and $\YY$, respectively. Suppose that there is $m\in\NN$ such that $\XB^m\subsetsim \YB^m$. Then $\XB\subsetsim\YB$.
\end{theorem}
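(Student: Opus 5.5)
The power cancelling principle says: if $\XB^m\subsetsim\YB^m$ for some $m\in\NN$, then $\XB\subsetsim\YB$. We reduce it to a combinatorial statement about index sets and then prove that statement via a matching argument.

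\emph{Reduction.} Write $\XB=(\xx_n)_{n\in\Nt}$ and $\YB=(\yy_k)_{k\in\Mt}$. An embedding witnessing $\XB^m\subsetsim\YB^m$ is an injection
\[
\pi\colon \Nt\times\{1,\dots,m\}\to \Mt\times\{1,\dots,m\}
\]
such that $\xx_{n}$, placed in copy $i$, is mapped to $\yy_{k}$ placed in copy $j$, where $\pi(n,i)=(k,j)$. By unconditionality, for each pair $(i,j)$ consider the set
\[
A_{i,j}=\enbrace{n\in\Nt \colon \pi(n,i)\in\Mt\times\{j\}}.
\]
Restricting the embedding shows that $(\xx_n)_{n\in A_{i,j}}$ is equivalent, via $\pi$, to a subfamily of $\YB$, with uniform constants. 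Composing with the canonical projections and inclusions of the direct sums keeps all constants bounded by the unconditional constants of $\XB$ and $\YB$ times the norm of the embedding.

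\emph{Combinatorial core.} For each fixed $i$, the sets $A_{i,1},\dots,A_{i,m}$ partition $\Nt$. For each fixed $j$, the maps $n\mapsto \pi(n,i)$ restricted to $A_{i,j}$, as $i$ varies, have pairwise disjoint images in $\Mt$. To get $\XB\subsetsim\YB$ we want a single injection $\sigma\colon\Nt\to\Mt$ such that $\sigma$ agrees on each piece of some finite partition of $\Nt$ with one of the maps $\pi_{i,j}:=\pi(\cdot,i)|_{A_{i,j}}$. If such a $\sigma$ exists, then $\XB$ splits into finitely many pieces, each equivalent to a subfamily of $\YB$, with disjoint images. By unconditionality of both bases, a finite union of such pieces is equivalent to the union of the images, with constant controlled by the number of pieces. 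Hence $\XB\subsetsim\YB$.

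\emph{Constructing $\sigma$ (the main obstacle).} Here we use a Hall-type / Kőnig argument in the spirit of Theorem~\ref{thm:SBUB}. Form the bipartite graph with edges $n\to \pi_{i,j}(n)$, so each $n\in\Nt$ has exactly $m$ outgoing edges, one for each $i$. Each $k\in\Mt$ receives at most $m$ edges, one from each copy $j$, since $\pi$ is injective. In a bipartite multigraph where every left vertex has degree $m$ and every right vertex has degree at most $m$, Hall's condition holds: any finite $F\subseteq\Nt$ has $m|F|$ edges landing in $N(F)$, which can absorb at most $m|N(F)|$ edges, so $|N(F)|\ge|F|$. Because every vertex has finite degree, the countable version of Hall's theorem (via compactness or König's lemma) then gives a matching $\sigma\colon\Nt\to\Mt$ saturating $\Nt$. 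Each edge is labelled by one of the $m^2$ pairs $(i,j)$, so $\sigma$ partitions $\Nt$ into at most $m^2$ pieces on which it coincides with some $\pi_{i,j}$, exactly as required.

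The delicate points are the infinite Hall step and checking that the equivalence constants stay uniform, depending only on $m$, the embedding norm, and the unconditional constants.
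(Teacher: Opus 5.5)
Your argument is correct. The multigraph induced by $\pi$ has every left vertex of degree exactly $m$ and every right vertex of degree at most $m$, so Hall's condition holds. The infinite finite-degree Hall theorem then gives an injection $\sigma\colon\Nt\to\Mt$ that agrees piecewise with the maps $\pi_{i,j}$, and unconditionality glues the finitely many equivalent pieces, which have disjoint images, into $\XB\subsetsim\YB$.

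The paper itself gives no proof and simply imports the result from \cite{AlbiacAnsorena2022b}. As far as I recall, the argument there is essentially this one: a Hall-type marriage lemma applied to $\pi$, followed by the same gluing step.
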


We emphasize that combining Theorem~\ref{thm:SBUB} with Theorem~\ref{thm:SCUB} yields Lemma~\ref{lem:AAA}. The combination of these two results also gives that $\XB\sim\YB$ when the unconditional bases $\XB$ and $\YB$ satisfy $\XB^m \sim \YB^m$ for some $m\in\NN$. The corresponding result in the category of Banach spaces is false. Indeed, the Banach spaces $\XX=\Sym_4$ and $\YY=\Sym_4^3$ are not isomorphic, while $\YY^2=\Sym_4^6 \simeq \Sym_2^2=\XX^2$. Still, it appears to be no information about the Banach space counterpart of Theorem~\ref{thm:SCUB}.

\begin{question}
Are there Banach spaces $\XX$ and $\YY$ such that $\XX^2\trianglelefteq \YY^2$ and $\XX\not\trianglelefteq \YY$?
\end{question}

The following result exhibits that the existence of a unique unconditional basis provides a connection between hyperplane-sum stability and power stability.

\begin{proposition}\label{prop:RelB}
Let $\XX$ be a Banach space with a unique unconditional basis $\XB$.

\begin{enumerate}[label=(\roman*), leftmargin=*,widest=ii]
\item Suppose $\XX$ is power stable. Then $\XB^2 \sim \XB$, and $\XX$ is both square stable and hyperplane stable.
\item Suppose $\XX$ is hyperplane-sum stable. Then $\XB\oplus\VB \sim \XB$ for any finite basis $\VB$, and $\XX$ is hyperplane stable.
\end{enumerate}
\end{proposition}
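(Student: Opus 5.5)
The plan is to move each isomorphism of spaces to the level of bases, using uniqueness of the unconditional basis of $\XX$. Then Theorems~\ref{thm:SBUB} and~\ref{thm:SCUB} give the permutative equivalences, and isomorphisms of spaces are read off from equivalences of bases.

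\emph{(i).} Suppose $\XX^m\simeq\XX$ for some $m\ge 2$, and fix an isomorphism $T\colon \XX^m\to\XX$. The image under $T$ of the semi-normalized unconditional basis $\XB^m$ of $\XX^m$ is a semi-normalized unconditional basis of $\XX$. By uniqueness it is permutatively equivalent to $\XB$, so $\XB^m\sim\XB$. Since $m\ge 2$, $\XB^2$ is a subbasis of $\XB^m$, and hence $\XB^2\subsetsim\XB^m\sim\XB$. Conversely, $\XB\subsetsim\XB^2$ trivially, as the first copy. Theorem~\ref{thm:SBUB} then yields $\XB^2\sim\XB$, and in particular $\XX^2\simeq\XX$.

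For hyperplane stability, let $\ee$ denote the (one-vector) basis of $\FF$. Then $\XB\oplus\ee\subsetsim\XB^2$, because a single vector of the second copy can be taken. Also $\XB\subsetsim\XB\oplus\ee$ trivially. Combining, $\XB\subsetsim\XB\oplus\ee\subsetsim\XB^2\sim\XB$, and Theorem~\ref{thm:SBUB} gives $\XB\oplus\ee\sim\XB$. Hence $\XX\oplus\FF\simeq\XX$.

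\emph{(ii).} Suppose $\XX\oplus\FF^h\simeq\XX$ for some $h\ge 1$. Transporting the basis $\XB\oplus\VB_h$, where $\VB_h$ is the unit vector basis of $\FF^h$, through the isomorphism and invoking uniqueness gives $\XB\oplus\VB_h\sim\XB$. Now let $\VB$ be an arbitrary finite basis of a space of dimension $d$. Any two normalized bases of a finite-dimensional space are equivalent, so $\VB$ is equivalent to the unit vector basis of $\FF^d$; the only dependence is on $d$. Iterating $\XB\oplus\VB_h\sim\XB$ a total of $k$ times gives $\XB\oplus\VB_{kh}\sim\XB$ for every $k$. Choose $k$ with $kh\ge d$. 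Then
\[
\XB\subsetsim\XB\oplus\VB\subsetsim\XB\oplus\VB_{kh}\sim\XB,
\]
and Theorem~\ref{thm:SBUB} gives $\XB\oplus\VB\sim\XB$. Taking $d=1$ yields $\XX\oplus\FF\simeq\XX$.

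The main point needing care is the application of Theorem~\ref{thm:SBUB} with bases that include finite blocks: we must check that the sub-equivalences above are genuine injections of index sets with bounded isomorphic embeddings. This holds because finite blocks only contribute bounded finite-rank perturbations. Theorem~\ref{thm:SCUB} is not really needed here.
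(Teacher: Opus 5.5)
Your proof is correct and follows the paper's argument. In both parts you use uniqueness to obtain $\XB^m\sim\XB$ (resp.\ $\XB\oplus\VB_h\sim\XB$), form the sandwich $\XB\subsetsim\cdots\subsetsim\XB$, and close it with Theorem~\ref{thm:SBUB}, exactly as the paper does. Your iteration step $\XB\oplus\VB_{kh}\sim\XB$, which covers an arbitrary finite basis $\VB$, is a welcome addition: the paper's proof of (ii) treats only the one-dimensional case explicitly, and by writing $m\ge 2$ it silently skips the trivial case $h=1$.
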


\begin{proof}
Let $\UB$ be a basis in the one-dimensional space $\FF$. Suppose there is $m\in\NN\cap[2,\infty)$ such that $\XX^m \sim \XX$. Since $\XB^m$ is a semi-normalized unconditional basis of a space isomorphic to $\XX$,
\[
\XB \subsetsim \XB \oplus \UB \subsetsim \XB^2 \subsetsim \XB^m \sim \XB.
\]
By Theorem~\ref{thm:SBUB}, $\XB \sim \XB \oplus \UB \sim \XB^2$. Consequently, $\XX \simeq \XX\oplus \FF \simeq \XX^2$.

Suppose now that there is $m\in\NN\cap[2,\infty)$ such that $\XX\oplus\FF^m \simeq \XX$. Since $\XB\oplus\UB^m$ is a semi-normalized unconditional basis of a space isomorphic to $\XX$,
\[
\XB \subsetsim \XB \oplus \UB \subsetsim \XB\oplus \UB^m \sim \XB.
\]
By Theorem~\ref{thm:SBUB}, $\XB \sim \XB \oplus \UB$ and so $\XX \simeq \XX\oplus\FF$.
\end{proof}

Proposition~\ref{prop:RelB} alerts us that if we wish to answer Question~\ref{qt:AA} we must concentrate on Banach spaces $\XX$ with $s(\XX)=0$. In fact, any square non-stable Banach space with a unique unconditional basis would provide a counterexample which would solve in the positive Question~\ref{qt:AA}.
\section{Strictly singular operators between spaces with bases}\label{sect:SSO}\noindent
Recall that an operator $T\in\Bt(\XX,\YY)$ between two Banach spaces $\XX$ and $\YY$ is \emph{strictly singular} if its restriction to any infinite-dimensional subspace of $\XX$ fails to be an isomorphic embedding. The space of all strictly singular operators from $\XX$ to $\YY$ is a vector space closed under left and right composition, and it contains the space of all compact operators (see \cite{Kato1958}).

\begin{definition} Let $\XB=(\xx_n)_{n\in\Nt}$ and $\YB=(\yy_k)_{k\in\Kt}$ be Markushevich bases of Banach spaces $\XX$ and $\YY$, respectively. Let $(\yy_n^*)_{n\in\Nt}$ be the dual basis of $\YB$. The \emph{matrix of $T\in\Bt(\XX,\YY)$ relative to $\XB$ and $\YB$} is the double sequence
\[
\mat(T;\XB,\YB)=\enpar{ \yy_k^*(T(\xx_n))}_{n\in\Nt, k\in\Kt}.
\]
An operator $T$ is \emph{roughly diagonal} relative to $\XB$ and $\YB$ if the rows and columns of $\mat(T;\XB,\YB)$ are sequences in $c_{00}(\Kt)$ and $c_{00}(\Nt)$, respectively.
\end{definition}

Note that if $T$ is a roughly diagonal operator, then $T(f)$ is finitely supported provided $f\in\XX$ is finitely supported.

Let $\XB=(\xx_n)_{n\in\Nt}$ be a Markushevich basis of a Banach space. Let $\At\subseteq\Nt$. Given an arbitrary operator $T\in \Bt([\XB|\At],\XX)$, let
\[
\mat(T;\XB)=(a_{n,k})_{n\in \At, k\in\Nt}
\]
be the matrix of $T$ relative to $(\xx_n)_{n\in\At}$ and $\XB$. The \emph{diagonal} of $T$ will be the family
\[
\dig(T;\XB)=(a_{n,n})_{n\in \At}.
\]
Since $\dig(T;\XB)\in\ell_\infty(\At)$, if $\XB$ is an unconditional basis we can safely define
\[
\diag(T;\XB)=M_{\dig(T;\XB)}[\XB]\in \Bt([\XB|\At],\XX).
\]

\begin{definition}
Let $\XX$ be a Banach space with a Schauder basis $\XB$, and $\YY$ be a Banach space. We say that $T\in\Bt(\XX,\YY)$ is \emph{strictly singular relative to $\XB$} if
\[
\lim_n T(x_n)=0
\]
for every semi-normalized block sequence $(x_n)_{n=1}^\infty$ relative to $\XB$.
\end{definition}

\begin{lemma}\label{lem:SS:A}
Let $\XX$ be a Banach space with a Schauder basis $\XB$. Suppose that $T\in\Bt(\XX)$ is strictly singular relative to $\XB$. Then $T$ is strictly singular.
\end{lemma}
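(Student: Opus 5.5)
We argue by contradiction. Suppose $T$ is not strictly singular. Then there is an infinite-dimensional closed subspace $\VV\subseteq\XX$ and a constant $c>0$ such that
\[
\norm{T(v)}\ge c\norm{v}, \quad v\in\VV.
\]
The plan is to extract from $\VV$ a semi-normalized block sequence relative to $\XB$ on which $T$ stays bounded below. This contradicts the hypothesis that $T$ is strictly singular relative to $\XB$.

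\textbf{Step 1: almost-block vectors in $\VV$.} Let $K$ be the basis constant of $\XB$. Since $\VV$ is infinite-dimensional and $S_n[\XB]$ has finite rank, the kernel of the restriction $S_n[\XB]|_{\VV}$ is a finite-codimensional, hence nonzero, subspace of $\VV$. So for every $n\in\NN$ there is $v\in\VV$ with $\norm{v}=1$ and $S_n[\XB](v)=0$. Such a $v$ has its support in $(n,\infty)$. Since $\lim_m S_m[\XB](v)=v$, we may truncate $v$ at some large index and obtain a finitely supported vector as close to $v$ as we like.

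\textbf{Step 2: recursive construction.} Fix a null sequence $(\varepsilon_k)_{k=1}^\infty$ with $\varepsilon_k\le 1/2$ and $\varepsilon_k\norm{T}\le c/2$; for instance $\varepsilon_k=\min\{1/2,\,c/(2\norm{T}+1)\}\,2^{-k}$. We build recursively integers $0=n_0<n_1<\cdots$ and unit vectors $v_k\in\VV$. At step $k$, Step 1 gives $v_k\in\VV$ with $\norm{v_k}=1$ and $S_{n_{k-1}}[\XB](v_k)=0$. We then choose $n_k>n_{k-1}$ so large that
\[
\norm{v_k-S_{n_k}[\XB](v_k)}\le\varepsilon_k,
\]
and set $x_k=S_{n_k}[\XB](v_k)$. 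Then $x_k$ is supported in $(n_{k-1},n_k]$, so $(x_k)_{k=1}^\infty$ is a block sequence relative to $\XB$.

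\textbf{Step 3: conclusion.} Each $x_k$ satisfies $1/2\le 1-\varepsilon_k\le\norm{x_k}\le 1+\varepsilon_k\le 3/2$, so the block sequence is semi-normalized. Moreover,
\[
\norm{T(x_k)}\ge\norm{T(v_k)}-\norm{T}\,\varepsilon_k\ge c-\frac{c}{2}=\frac{c}{2}
\]
for every $k$. Hence $T(x_k)$ does not tend to $0$, contradicting the assumption that $T$ is strictly singular relative to $\XB$.

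This is the standard gliding-hump argument. The only point needing care is Step 1, the existence of unit vectors of $\VV$ in the kernel of $S_n[\XB]$, and that follows immediately from the finite rank of $S_n[\XB]$.
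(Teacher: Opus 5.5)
Your proof is correct. It follows the paper's overall scheme: argue by contraposition, produce inside $\VV$ vectors that are almost blocks of $\XB$, truncate them to genuine blocks, and use a perturbation estimate to keep $T$ bounded below. The difference is in the key step that produces vectors with negligible initial coordinates.

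\emph{The paper's route.} It picks a uniformly separated sequence $(z_k)$ in the unit ball of $\VV$ and, by a diagonal argument, passes to a subsequence whose coordinates converge. It then takes the differences $y_k=z_{2k-1}-z_{2k}$, which are coordinatewise null while $(T(y_k))$ stays semi-normalized. Finally it runs a two-sided gliding hump, passing again to a subsequence, to make the initial pieces of $y_k$ small.

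\emph{Your route.} You use that $\VV\cap\Ker S_n[\XB]$ has finite codimension in $\VV$. This gives unit vectors of $\VV$ whose first $n$ coordinates vanish exactly, so only the tail has to be cut. No separated sequence, diagonal extraction or subsequences are needed, and the constants are explicit: $1/2\le\norm{x_k}\le 3/2$ and $\norm{T(x_k)}\ge c/2$.

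\emph{What the paper's route buys.} Its differencing argument only uses that some bounded sequence has a uniformly separated image under $T$. Starting from such a sequence instead of one in $\VV$, the same argument shows that an operator which is strictly singular relative to a Schauder basis $\XB$ is in fact compact. Your argument relies on the lower bound $\norm{T(v)}\ge c\norm{v}$ on a whole subspace. To reach the compactness conclusion it would need an extra step, for instance bounding the norm of $T$ restricted to the tail spaces $[\xx_j\colon j>n]$ from below, uniformly in $n$, when $T$ is not compact.
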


\begin{proof}
We will prove the counter-reciprocal. Let $\VV$ be an infinite-dimensional subspace of $\XX$ such that $T|_\VV$ is an isomorphic embedding. Pick a uniformly separated sequence $(z_k)_{k=1}^\infty$ in the unit ball of $\VV$. Let $(\xx_n^*)_{n=1}^\infty$ be the dual basis of $\XB$. By Cantor's diagonal technique, passing to a subsequence we can assume that $(\xx_n^*(z_k))_{k=1}^\infty$ converges for all $n\in\NN$. The sequence
\[
y_k=z_{2k-1}-z_{2k}, \quad k\in\NN,
\]
as well as $(T(y_k))_{k=1}^\infty$ is semi-normalized, and $\lim_k \xx_n^*(y_k)=0$ for all $n\in\NN$. Choose $c>0$ and use the gliding hump technique to pick, passing to a subsequence, an integer sequence $(n_k)_{k=0}^\infty$ with $n_0=0$ such that the projections
\[
x_k=S_{J_k}[\XB](y_k)
\]
associated with the integer intervals $J_k=(n_{k-1},n_k]\cap \ZZ$ satisfy
\[
\norm{y_k-x_k}\le c
\]
for all $k\in\NN$. Choosing $c$ small enough, we obtain that both $(x_k)_{k=1}^\infty$ and $(T(x_k))_{k=1}^\infty$ are semi-normalized.
\end{proof}

\begin{lemma}\label{lem:SS:B}
Let $\XX$ and $\YY$ be Banach spaces with Schauder bases $\XB=(\xx_n)_{n=1}^\infty$ and $\YB=(\yy_k)_{k=1}^\infty$, respectively, and let $T\in\Bt(\XX,\YY)$. Suppose that $\XB$ is shrinking and that any roughly diagonal operator $S\in\Bt(\XX,\YY)$ with
\[
\abs{\mat(S;\XB,\YB)} \le \abs{\mat(T;\XB,\YB)}
\]
is strictly singular relative to $\XB$. Then $T$ is strictly singular relative to $\XB$.
\end{lemma}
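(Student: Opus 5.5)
We argue by contradiction. Suppose $T$ is not strictly singular relative to $\XB$. Then there is a semi-normalized block sequence $(x_j)_{j=1}^\infty$ relative to $\XB$ such that $\inf_j \norm{T(x_j)}\ge c>0$. The plan is to construct a roughly diagonal operator $S$ whose matrix is obtained from $\mat(T;\XB,\YB)$ by deleting entries, so that $\abs{\mat(S;\XB,\YB)}\le\abs{\mat(T;\XB,\YB)}$, and for which $\norm{S(x_{j_k})}$ stays bounded below along a subsequence. This contradicts the hypothesis.

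The first step is to localize $T(x_j)$ in $\YB$. Since $\XB$ is shrinking and $(x_j)$ is a bounded block sequence, $(x_j)$ is weakly null. Hence $\yy_k^*(T(x_j))=(T^*\yy_k^*)(x_j)\to 0$ for every $k$. By the gliding hump technique, passing to a subsequence, we find disjoint integer intervals $J_k$ (in $\YB$-indices), lying successively further to the right, such that
\[
\norm{T(x_{j_k})-S_{J_k}[\YB](T(x_{j_k}))}\le \varepsilon_k
\]
with $\sum_k\varepsilon_k$ small. Let $I_k$ denote the support interval of $x_{j_k}$. Passing to a subsequence, we may also arrange that the intervals $I_k$ and $J_k$ increase.

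We then define
\[
S=\sum_k S_{J_k}[\YB]\circ T\circ S_{I_k}[\XB].
\]
Its matrix is $\mat(T)$ restricted to the blocks $I_k\times J_k$ and zero elsewhere. So it is dominated entrywise by $\abs{\mat(T)}$, and every row and every column meets at most one finite block, which makes $S$ roughly diagonal. By construction $S(x_{j_k})=S_{J_k}[\YB]T(x_{j_k})$, whose norm is at least $c-\varepsilon_k$. If $S$ is bounded, we have our contradiction.

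The main obstacle is boundedness of $S$, since $\YB$ is only a Schauder basis and the block-diagonal compression of $T$ need not be bounded. The fix I would try first is to choose the blocks sparsely so that the off-diagonal interactions are summable. Using shrinkingness once more, $\norm{T^*\yy^*\circ(\Id-S_{n}[\XB])}\to0$ for each fixed functional, and compactness of finite-dimensional sets then gives $\norm{S_{J}[\YB]\,T\,(\Id-S_n[\XB])}\to 0$ as $n\to\infty$ for fixed finite $J$. Dually, $\norm{(\Id-S_m[\YB])\,T\,S_{I}[\XB]}\to0$ as $m\to\infty$ for fixed finite $I$. Choosing $I_k$, $J_k$ recursively with gaps, so that the cross terms have norm below $2^{-k}$, we can write $S$ as the composite $\sum_k S_{J_k}[\YB]\,T\,(S_{b_k}[\XB]-S_{a_k}[\XB])$ of bounded pieces plus a norm-summable error. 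Controlling the main part still needs the bounded projections $S_{J_k}[\YB]$ to be handled collectively. A cleaner route may be to allow gaps: set $S=\sum_k (S_{c_k}[\YB]-S_{d_k}[\YB])\,T\,(S_{b_k}[\XB]-S_{a_k}[\XB])$, where the intervals are chosen so that each summand differs from the corresponding one for $T$ by a perturbation of norm at most $2^{-k}$. Then $S=T\circ P$ plus an absolutely summable series, and bounding $P$ is where I expect the real work to lie.
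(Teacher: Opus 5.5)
Your proposal has a genuine gap at the step you leave open: the boundedness of $S=\sum_k S_{J_k}[\YB]\circ T\circ S_{I_k}[\XB]$. With this block-diagonal construction the gap cannot be closed in general, because $\XB$ and $\YB$ are only Schauder bases. Already for $T=\Id$ on James' quasi-reflexive space, whose unit vector basis is shrinking but conditional, your $S$ is the coordinate projection onto $\bigcup_k (I_k\cap J_k)$. When the pieces are separated by gaps, it sends $\sum_{n\le N}\ee_n$, whose norm is bounded independently of $N$, to a vector of norm at least of order $\sqrt{m}$, where $m$ is the number of pieces inside $[1,N]$. If instead you take $(I_k)$ and $(J_k)$ to be partitions of $\NN$ into intervals, then $T-S$ splits into two sums. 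The first is $\sum_k (\Id-S_{\max J_k}[\YB])\, T\, S_{I_k}[\XB]$, and making it small forces $\max J_k$ to be large compared with $\max I_k$. The second is $\sum_k S_{J_k}[\YB]\, T\,(\Id-S_{\max I_k}[\XB])$, and making it small forces the opposite. Both cannot hold. The underlying problem is that cutting $\mat(T;\XB,\YB)$ into diagonal blocks discards entries adjacent to the diagonal at the block corners, and those need not be small.

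The missing idea, which is the paper's route, is to discard only entries far from the diagonal, column by column and row by row. Then the discarded part is an operator of small norm and boundedness comes for free. Your two tail observations are exactly the right ingredients, but they should be applied one index at a time.
\begin{itemize}
\item Since $\YB$ is a basis, pick $k_n>n$ with $\sum_n \norm{\xx_n^*}\,\norm{T(\xx_n)-S_{k_n}[\YB](T(\xx_n))}\le\varepsilon$.
\item Since $\XB$ is shrinking, pick $n_k>k$ with $\sum_k\norm{\yy_k}\,\norm{T^*(\yy_k^*)-T^*(\yy_k^*)\circ S_{n_k}[\XB]}\le\varepsilon$.
\item Set $E_1(z)=\sum_n \xx_n^*(z)\,\enpar{T(\xx_n)-S_{k_n}[\YB](T(\xx_n))}$ and $E_2(z)=\sum_k \yy_k^*\enpar{T(z-S_{n_k}[\XB](z))}\,\yy_k$. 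Both have norm at most $\varepsilon$.
\item $E_1$ carries exactly the entries of $\mat(T;\XB,\YB)$ with $k>k_n$, and $E_2$ those with $n>n_k$. These two regions are disjoint, since $k>k_n>n>n_k>k$ is impossible.
\end{itemize}
Hence $T_1=T-E_1-E_2$ coincides entrywise with $T$ where $k\le k_n$ and $n\le n_k$, and vanishes elsewhere. It is bounded, roughly diagonal, and dominated by $\abs{\mat(T;\XB,\YB)}$. Moreover $\norm{T_1(x_j)}\ge\norm{T(x_j)}-2\varepsilon\sup_i\norm{x_i}$ stays bounded below for small $\varepsilon$, which contradicts the hypothesis. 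This argument needs no weak nullity, no gliding hump, and no passage to a subsequence.
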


\begin{proof}
We will prove the counter-reciprocal. Let $(\xx_n^*)_{n=1}^\infty$ and $(\yy_k^*)_{k=1}^\infty$ be the dual bases of $\XX$ and $\YY$, respectively. Let $(z_j)_{j=1}^\infty$ be a semi-normalized block sequence relative to $\XB$ such that
\[
\limsup_j \norm{T(z_j)}>0.
\]
Passing to a subsequence, we can assume that $c_1:=\inf_j \norm{T(z_j)}>0$. Set $c_2=\sup_j \norm{z_j}$. Pick $0<c<c_1$ and define $\varepsilon>0$ by
\[
2 \varepsilon c_2=c_1-c.
\]
Pick $(k_n)_{n=1}^\infty$ and in $\NN$ such that $k_n>n$ for all $n\in\NN$ and
\[
\sum_{n=1}^\infty \norm{\xx_n^*} \norm{T(\xx_n)-S_{k_n}[\YB](T(\xx_n))}\le \varepsilon.
\]
Then, there is $E_1\in\Bt(\XX,\YY)$ with $\norm{E_1}\le \varepsilon$ given by
\[
E_1(z)=\sum_{n=1}^\infty \xx_n^*(z) \enpar{T(\xx_n)-S_{k_n}(T(\xx_n)}, \quad z\in\XX.
\]
Also pick $(n_k)_{k=1}^\infty$ such that $n_k>k$ and
\[
\sum_{k=1}^\infty \norm{\yy_k} \norm{T^*(\yy_k^*) - R_{n_k}^*(\yy_k^*)}\le \varepsilon.
\]
Then, there is $E_2\in\Bt(\XX,\YY)$ with $\norm{E_2}\le \varepsilon$ given by
\[
E_2(z)= \sum_{k=1}^\infty \enpar{T^*(\yy_k^*) - R_{n_k}^*(\yy_k^*)}(z) \, \yy_k= \sum_{k=1}^\infty \yy_k^* \enpar{T\enpar{f-R_{n_k}(z)}} \, \yy_k.
\]
Let $T_1=T-E_1-E_2$ and set
\[
\mat(T;\XB,\YB)=(a_{n,k})_{n,k\in\NN}, \quad \mat(T_1;\XB,\YB)=(b_{n,k})_{n,k\in\NN}
\]
We have $b_{n,k}=0$ if $n>n_k$ or $n>n_k$, and $a_{n,k}=b_{n,k}$ otherwise. Besides,
\[
\norm{T_1(z)}\ge \norm{T(z)}- \frac{c_1-c}{c_2}\norm{z},\quad z\in\XX.
\]
Consequently, $\norm{T_1(z_j)} \ge c$ for all $j\in\NN$.
\end{proof}

\begin{lemma}\label{lem:SS:C}
Let $\XX$ and $\YY$ be Banach spaces with Schauder bases $\XB=(\xx_n)_{n=1}^\infty$ and $\YB=(\yy_k)_{k=1}^\infty$, respectively, and let $T\in\Bt(\XX,\YY)$ be a roughly diagonal operator. Suppose that $\lim_j T(z_j)=0$ whenever $(z_j)_{j=1}^\infty$ is a semi-normalized finitely supported sequence in $\XX$ such that the sequence $(C_j)_{j=1}^\infty$ defined by
\[
C_j= \bigcup_{ k\in\supp(z_j)} \{k\}\cup \supp(T(\xx_k))
\]
satisfies $\max(C_j)<\min(C_{j+1})$ for all $j\in\NN$. Then $T$ is strictly singular relative to $\XB$.
\end{lemma}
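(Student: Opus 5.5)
We argue by contraposition, in the spirit of Lemmas~\ref{lem:SS:A} and \ref{lem:SS:B}. Suppose $T$ is not strictly singular relative to $\XB$. Then there is a semi-normalized block sequence $(x_j)_{j=1}^\infty$ relative to $\XB$ with $\limsup_j\norm{T(x_j)}>0$. Passing to a subsequence, we may assume $c:=\inf_j\norm{T(x_j)}>0$. We may also assume that each $x_j$ is finitely supported. This is automatic for blocks with $\eta=\infty$ in the definition, since each block is a finite linear combination of the $\xx_n$.

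The goal is to extract a further subsequence $(z_j)_{j=1}^\infty$ of $(x_j)_{j=1}^\infty$ whose associated sets $C_j$ are successive, meaning $\max(C_j)<\min(C_{j+1})$. Such a sequence is semi-normalized and finitely supported, and $\norm{T(z_j)}\ge c$ for all $j$. This contradicts the hypothesis, and the lemma follows.

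The key point is that rough diagonality keeps each $C_j$ finite and lets us push later supports past it. Given $x_j$ with finite support, each column of $\mat(T;\XB,\YB)$ lies in $c_{00}$. So $\supp(T(\xx_k))$ is finite for every $k$, and hence
\[
C(x_j)=\bigcup_{k\in\supp(x_j)}\{k\}\cup\supp(T(\xx_k))
\]
is a finite subset of $\NN$.

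We build the subsequence recursively. Let $z_1=x_1$. Having chosen $z_1,\dots,z_i$, set $N=\max C(z_i)$. Pick $z_{i+1}=x_j$ with $j$ large enough that:
\begin{enumerate}[label=(\alpha), leftmargin=*]
\item $\min\supp(x_j)>N$, which is possible because the supports of a block sequence march to infinity;
\item $\supp(T(\xx_k))\subseteq(N,\infty)$ for every $k\in\supp(x_j)$.
\end{enumerate}

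Condition (\alpha) forces the singleton parts $\{k\}$ of $C(z_{i+1})$ to lie above $N$. Condition (\alpha)(b) forces the image parts above $N$ too. Together they give $\max C(z_i)<\min C(z_{i+1})$.

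The step needing care is the second condition. Here we use that the rows of $\mat(T;\XB,\YB)$ also lie in $c_{00}$. For each of the finitely many indices $m\le N$, the set $\{k\colon \yy_m^*(T(\xx_k))\neq0\}$ is finite. Let $K$ be the maximum of the union of these finitely many finite sets. Then $\supp(T(\xx_k))\cap[1,N]=\emptyset$ for all $k>K$. So it suffices to choose $j$ with $\min\supp(x_j)>\max\{N,K\}$.

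A minor point is whether $C_j$ should be formed from the support of $z_j$ or from that of $T(z_j)$. Because $\supp(T(z_j))\subseteq\bigcup_{k\in\supp(z_j)}\supp(T(\xx_k))$, our choice is the larger set and matches the statement. The remaining verifications are routine bookkeeping.
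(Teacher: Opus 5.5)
Your proof is correct and follows essentially the same route as the paper. You argue by contraposition, take a semi-normalized block sequence with $\inf_j\norm{T(x_j)}>0$, and use both finiteness conditions of rough diagonality to extract a subsequence whose sets $C_j$ are successive; the paper packages the same finiteness as that of $\{m\colon k\in\Psi(\supp(y_m))\}$ for each $k$, and picks the next index as the least one outside the union of these sets over $k\le\max C_j$. The only blemish is cosmetic: relative to the paper's convention $\mat(T;\XB,\YB)=(\yy_k^*(T(\xx_n)))_{n,k}$, your ``rows'' and ``columns'' are swapped, which is harmless since you use both conditions.
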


\begin{proof}
We will proceed by proving the counter-reciprocal. Define
\[
\Psi(A)= \bigcup_{n\in A} \{n\}\cup \supp(T(\xx_n)) , \quad A\subseteq\NN.
\]
Let $(y_m)_{m=1}^\infty$ be a semi-normalized block basic sequence in $\XX$ such that $\inf_m\norm{T(y_m)}>0$. We will use that for each $k\in\NN$, the set
\[
A_k:=\enbrace{m \in \NN \colon k \in \Psi( \sup(y_m))}
\]
is finite to recursively construct $(m_j)_{j=1}^\infty$ in $\NN$. Set $m_1=1$. Let $j\in\NN$ and suppose that $m_{j}$ is defined. Put
\[
k_j=\max\enpar{ \Psi( \supp(y_{m_j}))}, \quad D_j =\bigcup_{k=1}^{k_j} A_k, \mbox{ and } m_{j+1} =\min(\NN\setminus D_j).
\]
Note that $m\in D_j$ if and only if $[1,k_j]$ intersects $\Psi(\supp(y_m))$. Hence, by construction,
\[
k_j < \min\enpar{ \Psi( \supp(y_{m_{j+1}}))}.
\]
Since the sequence $(z_j)_{j=1}^\infty$ given by $z_j=y_{m_j}$ for all $j\in\NN$ is semi-normalized, and $(T(z_j))_{j=1}^\infty$ does not converge to zero, we are done.
\end{proof}

\section{Complemented subspace unconditional structure of Banach spaces with a small space of operators}\label{sect:D+S}\noindent
In 1994, Gowers constructed an example of a Banach space $\GG$ with an unconditional basis which is not isomorphic to $\GG\oplus \RR$, thus giving a negative answer to a question of Banach known as the hyperplane problem. Subsequently, Gowers and Maurey \cite{GowersMaurey1997} proved the stronger result that every operator defined on $\GG$ is the sum of a diagonal operator and a strictly singular one. One could say then that $\GG$ has as few operators as possible. This is the motivation to introduce a new kind of bases, which we call strictly singular plus diagonal bases, and study the complemented subspace unconditional structure of those Banach spaces with unconditional bases which additionally have this property.

We will denote by $\Ft(\XX,\YY)$ the set of all \emph{semi-Fredholm operators} from $\XX$ to $\YY$, that is, all $T\in\Bt(\XX,\YY)$ such that
\begin{enumerate}[label=(\alph*),leftmargin=*,widest=c]
\item\label{F:Range} $T(\XX)$ is closed in $\YY$, and
\item $d_1:=\dim(\Ker(T))<\infty$.
\end{enumerate}
If
\[
d_2:=\codim(f(\XX);\YY)<\infty,
\]
condition~\ref{F:Range} is redundant. The Fredholm index of $T$ is the number
\[
\ind(T)=d_1-d_2.
\]
If $\ind(T)\in\ZZ$, that is $d_2<\infty$, $T$ is a \emph{Fredholm operator}. The existence of such an operator implies a close relation between $\XX$ to $\YY$. In fact, if $\ind(T)\ge 0$, then $\XX\simeq \YY\oplus \FF^{\ind(T)}$. In turn, if $\ind(T)\le 0$, then $\XX\simeq\YY_0$, where $\YY_0$ is an arbitrary subspace of $\YY$ with $\codim(\YY_0,\YY)=-\ind(T)$.

We will use the following perturbation result.

\begin{theorem}[see \cite{GowersMaurey1997}*{Lemma 18}]\label{cor:Fred+SS}
Let $\XX$ and $\YY$ be Banach spaces, $T\in\Bt(\XX,\YY)$ be a semi-Fredholm operator and $S\in\Bt(\XX,\YY)$ be strictly singular. Then $T+S$ is a semi-Fredholm operator with $\ind(T)=\ind(T+S)$.
\end{theorem}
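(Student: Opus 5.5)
The statement is the classical stability of semi-Fredholm operators under strictly singular perturbations. I plan to prove it in two stages: first show that $T+S$ is semi-Fredholm, then that the index is preserved by a homotopy argument. The key reduction is that $T\in\Ft(\XX,\YY)$ is equivalent to the existence of a closed subspace $\XX_0\subseteq\XX$ of finite codimension and $c>0$ with $\norm{T(x)}\ge c\norm{x}$ for $x\in\XX_0$. Indeed, if $T$ is semi-Fredholm, take $\XX_0$ a complement of $\Ker(T)$ and apply the open mapping theorem. Conversely, such a lower bound forces $\Ker(T)\cap\XX_0=\{0\}$, so $\dim\Ker(T)<\infty$. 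Moreover, $T(\XX)=T(\XX_0)+T(F)$ with $F$ finite-dimensional and $T(\XX_0)$ closed, so $T(\XX)$ is closed.

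\textbf{Step 1 ($\dim\Ker(T+S)<\infty$).} Suppose not. Then $\Ker(T+S)\cap\XX_0$ is infinite-dimensional, and on it $S=-T$ is bounded below by $c$. This contradicts the strict singularity of $S$.

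\textbf{Step 2 (closed range).} If $(T+S)(\XX)$ is not closed, then for every closed finite-codimensional subspace $\VV$ the operator $T+S$ fails to be bounded below on $\VV$. Using this, I would recursively build a normalized sequence $(x_k)$ in $\XX_0$ with $\norm{(T+S)(x_k)}<\varepsilon_k$, where $\varepsilon_k\to0$ fast. Each $x_k$ is chosen in the intersection of $\XX_0$ with the kernels of finitely many functionals norming $x_1,\dots,x_{k-1}$ and $T(x_1),\dots,T(x_{k-1})$, as in Mazur's basic-sequence construction. This makes $(x_k)$ and $(T(x_k))$ basic with uniformly bounded basis constants. Summability of the $\varepsilon_k$ then gives that $T+S$ is compact (indeed nuclear-small) on $[x_k]$. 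Hence $S=(T+S)-T$ restricted to a suitable infinite-dimensional subspace $[x_k\colon k\ge k_0]$ equals $-T$ plus an operator of small norm. Since $T$ is bounded below there, so is $S$, contradicting strict singularity.

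\textbf{Step 3 (index).} Apply Steps 1 and 2 to $T+tS$ for $t\in[0,1]$; each $tS$ is strictly singular, so every $T+tS$ is semi-Fredholm. The classical result that the index is locally constant on the open set $\Ft(\XX,\YY)$ (a small-norm perturbation of a semi-Fredholm operator is semi-Fredholm with the same index, proved by restricting to $\XX_0$ and comparing codimensions) then makes $t\mapsto\ind(T+tS)$ locally constant on $[0,1]$, hence constant. This yields $\ind(T)=\ind(T+S)$, including the case where the index is $-\infty$.

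The main obstacle is Step 2. The delicate points are making the perturbation argument quantitative, so that "$T+S$ small on $[x_k]$" really holds in norm on the span rather than only on the vectors $x_k$, and guaranteeing that $T$ stays bounded below there. I would handle the first by arranging $\sum_k\varepsilon_k\norm{x_k^*}$ small, where $(x_k^*)$ are the biorthogonal functionals, which are bounded by twice the basis constant. The local constancy of the index is standard (Kato), and I would cite it rather than reprove it.
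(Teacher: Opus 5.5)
The paper gives no proof of this statement: it imports it from \cite{GowersMaurey1997}*{Lemma 18}, a result going back to \cite{Kato1958}, and uses it as a black box. Your argument is correct and is essentially the classical proof. It has three parts:
\begin{enumerate}
\item Characterize $\Ft(\XX,\YY)$ as the operators that are bounded below on some closed finite-codimensional subspace.
\item Use a Mazur-type extraction to find an infinite-dimensional subspace of $\XX_0$ on which $T+S$ has small norm, so that $S$ would be bounded below there.
\item Transport the index along the path $t\mapsto T+tS$, using Kato's local constancy of the index on the open set $\Ft(\XX,\YY)$. This correctly covers the value $-\infty$.
\end{enumerate}
Compared with the paper's bare citation, your route makes the semi-Fredholm part self-contained. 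It isolates the one deeper ingredient, the stability of the index under small-norm perturbations, which you cite just as the paper cites the whole theorem.

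A few points can be tightened.

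\emph{Mazur's construction.} The finitely many functionals must almost norm every vector of $\mathrm{span}\{x_1,\dots,x_{k-1}\}$, via a finite net of its unit sphere. Norming only the vectors $x_j$ themselves is not enough.

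\emph{The obstacle you flag does not arise.} Making $(T(x_k))$ basic is superfluous. Since $\XX_0$ is closed, $[x_k\colon k\in\NN]\subseteq\XX_0$, so $\norm{T(x)}\ge c\norm{x}$ on the whole span. On that span you also have $\norm{(T+S)(x)}\le 2K\norm{x}\sum_k\varepsilon_k$, where $K$ is the basis constant of $(x_k)$. The construction fixes $K$ in advance, so choosing $\sum_k\varepsilon_k<c/(2K)$ makes $S$ bounded below on $[x_k\colon k\in\NN]$ directly, with no need to pass to tails.

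\emph{Step 1 is subsumed by Step 2.} By your reduction, $T+S\notin\Ft(\XX,\YY)$ already means that $T+S$ is not bounded below on any closed finite-codimensional subspace. That is exactly the hypothesis the Step 2 construction needs.
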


\begin{definition} A basis $\XB$ of a Banach space $\XX$ has the \emph{diagonal plus strictly singular property} (D+S for short) if for all $T\in\Bt(\XX)$ the difference operator $T-\diag(T;\XB)$ is strictly singular.
\end{definition}

\begin{lemma}\label{lem:nulldiag}
Let $\XB=(\xx_n)_{n\in\Nt}$ be a D+S unconditional basis of a Banach space. Let $\At\subseteq\Nt$ and $T\in\Bt([\XB|\At],\XX)$ be such that $\dig(T;\XB)\in c_0(\At)$. Then $T$ is strictly singular.
\end{lemma}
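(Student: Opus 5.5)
The plan is to reduce to an operator on the whole space $\XX$ and use the D+S property. Let $P=S_{\At}[\XB]$ be the canonical projection onto $[\XB|\At]$, which is bounded because $\XB$ is unconditional. Set $\widetilde T=T\circ P\in\Bt(\XX)$. The matrix of $\widetilde T$ relative to $\XB$ agrees with that of $T$ on the columns indexed by $\At$ and vanishes on the columns indexed by $\Nt\setminus\At$. Hence $\dig(\widetilde T;\XB)$ is the extension of $\dig(T;\XB)$ by zero, and so lies in $c_0(\Nt)$. By the D+S property, $\widetilde T-\diag(\widetilde T;\XB)$ is strictly singular. Since strictly singular operators form a linear space closed under composition, and $T=\widetilde T|_{[\XB|\At]}$, it suffices to show that the diagonal operator $D=M_\beta[\XB]$, with $\beta\in c_0(\Nt)$, is strictly singular. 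Restricting a strictly singular operator to a subspace keeps it strictly singular, which then gives the claim for $T$.

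To show that $D$ is strictly singular, I will show it is in fact compact. The key point is that the dual basis of the semi-normalized unconditional basis $\XB$ is bounded, so the coefficient functionals are uniformly bounded. For each finite $F\subseteq\Nt$ the operator $D\circ S_F[\XB]=M_{\beta\chi_F}[\XB]$ has finite rank. Moreover,
\[
\norm{D-M_{\beta\chi_F}[\XB]}=\norm{M_{\beta\chi_{\Nt\setminus F}}[\XB]}\le C\sup_{n\notin F}\abs{\beta_n},
\]
where $C$ is the multiplier constant from the unconditionality estimate recalled in Section~\ref{sect:prelim}. This bound tends to $0$ as $F$ increases, because $\beta\in c_0(\Nt)$. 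So $D$ is a norm limit of finite-rank operators, hence compact, hence strictly singular (compact operators are strictly singular, as recalled from \cite{Kato1958}).

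The only step needing care is the first reduction: the composition $T\circ S_{\At}[\XB]$ must be a genuine element of $\Bt(\XX)$ with the stated diagonal. I also need to confirm that $\diag$ is computed relative to the whole basis $\XB$, so that the diagonal entries at indices outside $\At$ are indeed zero. Both follow directly from the definitions of $\mat$ and $\dig$. I expect no real obstacle beyond this bookkeeping. If one prefers not to use the norm bound on multipliers, one can instead argue via Lemma~\ref{lem:SS:A}. For a Schauder arrangement of $\XB$, semi-normalized block sequences $(x_j)$ have supports escaping to infinity, so $\norm{D(x_j)}\le C\sup_{n\in\supp x_j}\abs{\beta_n}\,\norm{x_j}\to 0$.
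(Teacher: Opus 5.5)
Your proposal is correct and follows essentially the same route as the paper: extend $T$ to $T\circ S_{\At}[\XB]\in\Bt(\XX)$, apply the D+S property, absorb the diagonal part (compact because its entries tend to zero), and restrict back to $[\XB|\At]$. Your finite-rank approximation argument supplies the compactness of $M_\beta[\XB]$, which the paper only asserts; it needs just the multiplier bound $\norm{M_\beta[\XB]}\le C\norm{\beta}_\infty$, not semi-normalization.
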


\begin{proof}
Since $M_\beta[\XB]$ is compact if $\beta\in c_0(\At)$, so is $\diag(T;\XB)$. Let $S=T\circ M_{\At}[\XB]$. By assumption, $R:=S - \diag(S;\XB)$ is strictly singular. Hence, the operator $Q:=R|_{[\XB|A]}$ is strictly singular. Therefore, $P:=Q+\diag(T;\XB)$ is strictly singular. Since $P=T$, we are done.
\end{proof}

A partial converse of Lemma~\ref{lem:nulldiag} also holds.

\begin{lemma}\label{lem:GMIdea}
Let $\XB=(\xx_n)_{n\in\Nt}$ be a D+S unconditional basis of a Banach space $\XX$. Let $\At\subseteq\Nt$ and $T\colon [\XB|\At] \to \XX$ be an isomorphic embedding.Then $\liminf\dig(T;\XB)>0$.
\end{lemma}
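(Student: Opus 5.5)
We argue by contradiction. Note first that if $\At$ is finite, then $\liminf_{n\in\At}$ equals $\infty$ by the convention fixed in Section~\ref{sect:prelim}, so there is nothing to prove. Assume therefore that $\At$ is infinite and that $\liminf \dig(T;\XB)=0$. Then there is an infinite set $\Bt\subseteq\At$ with $\lim_{n\in\Bt} a_{n,n}=0$, where $\dig(T;\XB)=(a_{n,n})_{n\in\At}$. Passing to a further infinite subset, we may even assume that $\sum_{n\in\Bt} \abs{a_{n,n}}$ is as small as we wish, though we will only need convergence to zero.

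Consider the restriction $T_0:=T|_{[\XB|\Bt]}\colon [\XB|\Bt]\to\XX$. Its matrix relative to $(\xx_n)_{n\in\Bt}$ and $\XB$ is the corresponding submatrix of $\mat(T;\XB)$, so $\dig(T_0;\XB)=(a_{n,n})_{n\in\Bt}\in c_0(\Bt)$. Lemma~\ref{lem:nulldiag}, applied with $\Bt$ in place of $\At$, then shows that $T_0$ is strictly singular.

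On the other hand, $T_0$ is the restriction of the isomorphic embedding $T$ to the infinite-dimensional subspace $[\XB|\Bt]$, so $T_0$ is itself an isomorphic embedding. Here we use that $\Bt$ is infinite and that $\XB$ is a basis, so the vectors $\xx_n$, $n\in\Bt$, are linearly independent. A strictly singular operator cannot be an isomorphic embedding on an infinite-dimensional space; this contradiction proves that $\liminf\dig(T;\XB)>0$.

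The only delicate point is the bookkeeping of the restriction. We must check that the diagonal of $T_0$ relative to $\XB$ really is the restriction of $\dig(T;\XB)$ to $\Bt$, which is immediate because the entry $a_{n,n}=\xx_n^*(T(\xx_n))$ depends only on $n$. We must also check that Lemma~\ref{lem:nulldiag} is stated for arbitrary subsets of $\Nt$, which it is. No appeal to Theorem~\ref{cor:Fred+SS} or to the machinery of Section~\ref{sect:SSO} is needed.
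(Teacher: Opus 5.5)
Your proposal is correct and is essentially the paper's proof: extract an infinite $\Mt\subseteq\At$ along which the diagonal tends to zero, and note that $T|_{[\XB|\Mt]}$ is an isomorphic embedding that is also strictly singular by Lemma~\ref{lem:nulldiag}. Your remark on finite $\At$ is a harmless extra that the paper leaves implicit, and, like the paper, you read the $\liminf$ as being taken over the absolute values of the diagonal entries.
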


\begin{proof}
Put $\dig(T;\XB)=(d_n)_{n\in\At}$ and asume by contradiction that
\[
\liminf_{n\in\At} d_n=0.
\]
Then, there is $\Mt\subseteq\At$ infinite such that $\lim_{n\in\Mt} d_n=0$. Set $S=T|_{ [\XB|\Mt]}$. On the one hand, $S$ is an isomorphic embedding. On the other hand, $S$ is strictly singular by Lemma~\ref{lem:nulldiag}.
\end{proof}

We will advance our understanding of D+S unconditional bases $\XB$ by proving that any two subbases of $\XB$ are permutatively equivalent only in the trivial case.

\begin{lemma}\label{lem:DSSTrivalEquivalence}
Let $\XB=(\xx_n)_{n\in\Nt}$ be a semi-normalized D+S unconditional basis. If $\Mt$ and $\Kt$ are subsets of $\Nt$ with $(\xx_n)_{n\in\Mt} \sim (\xx_n)_{n\in\Kt}$, then
\[
\abs{\Mt\setminus\Nt}=\abs{\Nt\setminus\Mt}<\infty.
\]
\end{lemma}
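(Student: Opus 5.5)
The displayed conclusion contains a typo: it should read $\abs{\Mt\setminus\Kt}=\abs{\Kt\setminus\Mt}<\infty$. The plan is to turn the permutative equivalence into an isomorphic embedding $T\colon[\XB|\Mt]\to\XX$ with $T(\xx_n)=\xx_{\pi(n)}$, where $\pi\colon\Mt\to\Kt$ is a bijection. Then we read off information from its diagonal via Lemma~\ref{lem:GMIdea}. Since $\XB$ is a Markushevich basis, the matrix entry $\xx_n^*(T(\xx_n))$ equals $1$ if $\pi(n)=n$ and $0$ otherwise. By Lemma~\ref{lem:GMIdea}, $\liminf_{n\in\Mt}\dig(T;\XB)>0$, so the set $F=\{n\in\Mt\colon \pi(n)\neq n\}$ is finite. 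Note that when $\Mt$ is finite the statement is trivial, because $\pi$ is then a bijection between finite sets.

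Knowing that $\pi$ is the identity off the finite set $F$, we get $\Mt\setminus F=\Kt\setminus\pi(F)$. Hence $\Mt\setminus\Kt\subseteq F$ and $\Kt\setminus\Mt\subseteq\pi(F)$, and both differences are finite. For the equality of cardinalities, write $\Mt=(\Mt\cap\Kt)\sqcup(\Mt\setminus\Kt)$ and $\Kt=(\Mt\cap\Kt)\sqcup(\Kt\setminus\Mt)$. Then
\[
\Mt\setminus F\subseteq \Mt\cap\Kt, \qquad \abs{\Mt\setminus\Kt}=\abs{F}-\abs{F\cap\Kt}, \qquad \abs{\Kt\setminus\Mt}=\abs{\pi(F)}-\abs{\pi(F)\cap\Mt}.
\]
Since $\pi$ is injective, $\abs{\pi(F)}=\abs{F}$. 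We also have $F\cap\Kt=(\Mt\cap\Kt)\setminus(\Mt\setminus F)=\pi(F)\cap\Mt$, because $\Mt\cap\Kt$ minus the common part $\Mt\setminus F=\Kt\setminus\pi(F)$ is exactly what remains of $F$ inside $\Kt$, equivalently what remains of $\pi(F)$ inside $\Mt$. This gives the equality.

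The one point I expect needs care is the use of Lemma~\ref{lem:GMIdea}: it requires $T$ to be an isomorphic embedding into $\XX$ defined on $[\XB|\Mt]$. This is exactly what permutative equivalence of the two subfamilies provides, because $[(\xx_n)_{n\in\Kt}]\subseteq\XX$. The liminf is taken over the cofinite filter on $\Mt$, so positivity forces only finitely many zeros on the diagonal, which is all we need.
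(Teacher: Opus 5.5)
Your proof is correct and follows the paper's own argument. You realize the permutative equivalence as an embedding $[\XB|\Mt]\to\XX$ with diagonal $(\delta_{\pi(n),n})_{n\in\Mt}$, apply Lemma~\ref{lem:GMIdea} to get that $\pi$ is the identity off a finite set $F$, and you correctly read the conclusion as $\abs{\Mt\setminus\Kt}=\abs{\Kt\setminus\Mt}<\infty$. The cardinality bookkeeping you add, which the paper leaves implicit, is sound; it can be shortened by noting that $\Mt\setminus\Kt=F\setminus\pi(F)$ and $\Kt\setminus\Mt=\pi(F)\setminus F$, with $\abs{F}=\abs{\pi(F)}$.
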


\begin{proof}
Let $\pi\colon\Mt\to\Kt$ be a bijection for which there is an isomorphism $S$ from $[\XB|\Mt]$ onto $[\XB|\Kt]$ such that $S(\xx_n)=\xx_{\pi(n)}$ for all $n\in\Mt$. Note that
\[
\dig(S;\XB)=(\delta_{\pi(n),n})_{n\in\Mt}.
\]
By Lemma~\ref{lem:GMIdea}, there is $F\subseteq\Mt$ finite such that $\pi(n)=n$ for all $n\in\Mt\setminus F$.
\end{proof}

Gowers and Maurey derived the fact that Gowers space $\GG$ is neither power stable nor hyperplane stable from the following result, which we rewrite with our ingredients.

\begin{theorem}[c.f.\@ \cite{GowersMaurey1997}*{Corollary 30}]\label{thm:GMHS}
Let $\XX$ be a Banach space with a D+S unconditional basis. Then $\XX$ is not isomorphic to any of its proper subspaces.
\end{theorem}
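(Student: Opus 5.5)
Suppose, towards a contradiction, that $\YY\subsetneq\XX$ is a proper closed subspace and $T\colon\XX\to\YY$ is an isomorphism onto $\YY$. Regard $T$ as an element of $\Bt(\XX)$. Since $\XB=(\xx_n)_{n\in\Nt}$ is D+S, we can write
\[
T=D+S, \quad D:=\diag(T;\XB)=M_\delta[\XB], \quad \delta=\dig(T;\XB),
\]
with $S$ strictly singular. By Lemma~\ref{lem:GMIdea} applied with $\At=\Nt$, we have $\liminf_{n\in\Nt}\abs{d_n}>0$, where $\delta=(d_n)_{n\in\Nt}$. Hence there is a finite set $F\subseteq\Nt$ such that $\abs{d_n}\ge c>0$ for all $n\in\Nt\setminus F$.

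Next we compute the index of $D$. Put $\beta=(b_n)_{n\in\Nt}$ with $b_n=1/d_n$ for $n\notin F$ and $b_n=0$ for $n\in F$. Because $\XB$ is unconditional and $\beta$ is bounded, $\beta$ is a multiplier. Then $M_\beta\circ D = D\circ M_\beta = S_{\Nt\setminus F}[\XB]$. Consequently $D$ restricted to $[\XB|\Nt\setminus F]$ is an automorphism of that subspace. On the finite-dimensional piece $[\XB|F]$, $D$ is a diagonal map that kills exactly the $\xx_n$ with $d_n=0$ and has range spanned by the remaining $\xx_n$, $n\in F$. So $D$ is Fredholm, and $\ind(D)=\abs{\{n\in F\colon d_n=0\}}-\abs{\{n\in F\colon d_n=0\}}=0$. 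By Theorem~\ref{cor:Fred+SS}, $T=D+S$ is then Fredholm with $\ind(T)=0$.

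Now we look at $T$ directly as a map $\XX\to\XX$. It is injective, so $\dim\Ker(T)=0$, and $\ind(T)=0$ forces $\codim(T(\XX);\XX)=0$, i.e.\ $T(\XX)=\XX$. This contradicts $T(\XX)=\YY\neq\XX$, which finishes the argument. Note that the argument does not need $\YY$ to have finite codimension: properness of $\YY$ alone gives the contradiction once $\ind(T)=0$ is known.

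The one delicate point is that Theorem~\ref{cor:Fred+SS} is applied to operators viewed in $\Bt(\XX,\XX)$, with the index computed relative to $\XX$ as codomain; this is consistent throughout. A secondary check is that the D+S property and Lemma~\ref{lem:GMIdea} are used with $\At=\Nt$ and the isomorphic embedding $T\colon\XX\to\XX$, which is exactly the setting of that lemma.
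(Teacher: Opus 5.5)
Your proof is correct and follows the paper's argument: Lemma~\ref{lem:GMIdea} makes $\diag(T;\XB)$ Fredholm of index zero, Theorem~\ref{cor:Fred+SS} transfers the zero index to $T$, and since $T$ is injective this forces $T(\XX)=\XX$. The only difference is that you write out the index computation for the diagonal operator (via the multiplier $\beta$ and the finite exceptional set $F$), which the paper leaves implicit.
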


\begin{proof}
Let $T\colon \XX\to \XX$ be an isomorphic embedding onto a subspace $\YY$. By Lemma~\ref{lem:GMIdea}, $\diag(T;\XB)$ is a Fredholm operator with null index. Hence, by Theorem~\ref{cor:Fred+SS}, $T$ is a Fredholm operator with null index. Therefore, $\YY=\XX$.
\end{proof}

The following result reveals that the existence of a D+S unconditional basis leads to a homogeneous complemented subspace structure. We note that the matrix spaces $Z_{p,q}$ with a unique unconditional basis have a similar homogeneous complemented structure, as was proved by Bourgain et al.\@ in \cite{BCLT1985}.

\begin{theorem}\label{thm:AAC}
Let $\XX$ be a Banach space with a D+S unconditional basis $\XB=(\xx_n)_{n\in\Nt}$. Then any complemented subspace of $\XX$ is congruent to $[\XB|\At]$ for some $\At\subseteq\Nt$.
\end{theorem}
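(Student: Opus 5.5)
Let $P\in\Bt(\XX)$ be a bounded projection with range $\YY$. The plan is to compare $P$ with its diagonal part. Put $\dig(P;\XB)=(d_n)_{n\in\Nt}$, so that $P-\diag(P;\XB)$ is strictly singular by the D+S property. Since $P^2=P$, the operator
\[
\diag(P;\XB)^2-\diag(P;\XB)
\]
differs from $P^2-P=0$ by a strictly singular operator. It is diagonal with entries $d_n^2-d_n$, and a diagonal operator is strictly singular only if its entries tend to zero: otherwise $\abs{d_n^2-d_n}\ge c>0$ on an infinite $\Mt$, and on $[\XB|\Mt]$ the operator is an isomorphism by unconditionality. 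Hence $d_n^2-d_n\to 0$, so there is a finite set $F$ such that every $d_n$ with $n\notin F$ is close to $0$ or to $1$.

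Let $\At=\{n\in\Nt\setminus F\colon \abs{d_n-1}<1/2\}$ and $Q=S_\At[\XB]$. Then $\dig(P-Q;\XB)\in c_0(\Nt)$, so $P-Q$ is strictly singular by Lemma~\ref{lem:nulldiag}. The aim is to deduce that $\YY=P(\XX)$ and $[\XB|\At]=Q(\XX)$ are congruent. For that, consider
\[
U=PQ+(\Id-P)(\Id-Q).
\]
It is a compact-free-style perturbation of $Q^2+(\Id-Q)^2=\Id$, differing from it by the strictly singular operator $(P-Q)Q-(P-Q)(\Id-Q)$. By Theorem~\ref{cor:Fred+SS}, $U$ is Fredholm of index $0$. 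Moreover $U$ maps $Q(\XX)$ into $P(\XX)$ and $(\Id-Q)(\XX)$ into $(\Id-P)(\XX)$. So $U|_{Q(\XX)}\colon Q(\XX)\to P(\XX)$ and $U|_{(\Id-Q)(\XX)}\colon (\Id-Q)(\XX)\to(\Id-P)(\XX)$ are both Fredholm: their kernels and cokernels sit inside those of $U$, and their ranges are closed because $U$ is the direct sum of the two pieces. Their indices $i_1,i_2$ satisfy $i_1+i_2=0$.

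The main obstacle is to correct the index so that both pieces become isomorphisms simultaneously. If $i_1=k\ge 0$, I would replace $\At$ by $\At'$, obtained by deleting $k$ elements from $\At$ and moving them to the complement. This changes $Q$ by a finite-rank operator, so the argument above still applies, and the new index is $i_1-k=0$, with $i_2+k=0$ correspondingly. If $i_1<0$, add $\abs{i_1}$ elements of $\Nt\setminus\At$ instead. Any degenerate case where the complement or $\At$ is too small is finite-dimensional bookkeeping handled the same way. Once both indices vanish, each restricted piece is Fredholm of index $0$ between the relevant spaces.

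Next, pass from index zero to isomorphisms. A Fredholm map of index zero from $\VV$ to $\WW$ gives $\VV\simeq\WW$: the finite-dimensional kernel is split off, and the range has the same finite codimension as the kernel dimension. Thus $[\XB|\At']\simeq\YY$ and $[\XB|\Nt\setminus\At']\simeq\Ker P$. By the remark in Section~\ref{sect:prelim}, complemented subspaces with isomorphic subspaces and isomorphic complements are congruent, which finishes the proof.
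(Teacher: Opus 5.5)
Your proof is correct and follows the same route as the paper's. You show that the diagonal of the projection is asymptotically $\{0,1\}$-valued, realize a block-diagonal Fredholm operator of index zero as a strictly singular perturbation of the identity, and then move finitely many basis indices to cancel the opposite indices of the two blocks. The differences are only local: you get $d_n^2-d_n\to 0$ directly from $P^2=P$ modulo the strictly singular ideal, whereas the paper argues by contradiction with an automorphism built from $P_1$ and $P_2=\Id-P_1$ whose restriction to a subbasis has null diagonal; and you use $PQ+(\Id-P)(\Id-Q)$ from $[\XB|\At]$ onto $P(\XX)$ instead of the paper's $M_{\Nt_j}[\XB]\circ\diag(P_j;\XB)\circ P_j$ from $P_j(\XX)$ into $[\XB|\Nt_j]$. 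Also, your ``degenerate case'' never occurs, since $\abs{\At}\ge\dim\Ker\bigl(U|_{Q(\XX)}\bigr)\ge i_1$ when $i_1\ge 0$, and symmetrically for $\Nt\setminus\At$ when $i_1<0$.
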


\begin{proof}
Let $P_1$, $P_2\in\Bt(\XX)$ be complementary projections. Set $\YY_j=P(\XX_j)$, and $\dig(P_j;\XB)=(a_{n,j})_{n\in\Nt}$
$j=1$, $2$. Assume by contradiction that
\[
\gamma:=\limsup_{n\in\Nt} \min_{j=1,2} \abs{a_{n,j}}>0.
\]
Then there is $\Mt\subseteq\Nt$ infinite such that the families $(a_{n,j})_{n\in \Mt}$, $j=1$, $2$, are bounded away from zero. By the Bolzano--Weierstrass theorem, there are $\Kt\subseteq \Mt$ infinite and $\alpha_1$, $\alpha_2\in(0,\infty)$ such that $\lim_{n\in \Kt} a_{n,j}=\alpha_j$, $j=1$, $2$. Set
\[
T=\frac{P_1}{\alpha_2}-\frac{P_2}{\alpha_1}, \quad S=T|_{[\XB|\Kt]}.
\]
On one hand, $T$ is an automorphism of $\XX$. On the other hand $\dig(S;\XB)\in c_0(\Kt)$, whence $S$ is strictly singular by Lemma~\ref{lem:nulldiag}.

This absurdity proves that $\gamma=0$. Hence, there is a partition $(\Nt_1,\Nt_2)$ of $\Nt$ such that $\lim_{n\in\Nt_j} a_{n,i}=0$, $\{i,j\}=\{1,2\}$.
Set
\[
T_j=M_{\Nt_j}[\XB] \circ \diag(P_j;\XB), \quad R_j=T_j\circ P_j, \quad j=1,2.
\]
By Lemma~\ref{lem:nulldiag}, $P_j-T_j$, $j=1$, $2$, are strictly singular operators. Consequently, the operator
\[
\Id_\XX-R_1-R_2=P_1^2+P_2^2-T_1\circ P_1 -T_2\circ P_2=(P_1 - T_1)\circ P_1 + (P_2 - T_2)\circ P_2
\]
is strictly singular. By Theorem~\ref{cor:Fred+SS}, $R_1+R_2$ is a Fredholm operator with null index. Therefore, the operator
\[
(R_1,R_2)\colon \YY_1\oplus \YY_2\to [\XB|\Nt_1] \oplus [\XB|\Nt_2]
\]
is a Fredholm operator with null index. Consequently, $R_j\colon \YY_j \to [\XB|\Nt_j]$, $j=1$, $2$, are Fredholm operators with opposite indices. Assume without loss of generality $k:=\ind(R_1)\ge 0$. Then,
\begin{multline*}
\abs{\Nt_2}\ge\codim(R_2(\YY_2); [\XB|\Nt_2])) \\ \ge \codim(R_2(\YY_2); [\XB|\Nt_2])) - \dim(\Ker(R_2))=k.
\end{multline*}
Pick $F\subseteq \Nt_2$ with $\abs{F}=k$ and set $\Mt_1=\Nt_1\cup F$ and $\Mt_2=\Nt_2\setminus F$. We infer that $\YY_j\simeq [\XB|\Mt_j]$, $j=1$, $2$.
\end{proof}

We say that a family $\XB=(\xx_k)_{k\in\Kt}$ in a Banach space $\XX$ is \emph{complemented} if $[\XB]$ is a complemented subspace of $\XX$. If $\XB$ is unconditional and complemented, then there is $\XB^*=(\xx_k^*)_{k\in\Kt}$ in $\XX^*$ such that the mapping
\[
f\mapsto Q(f):= \sum_{k\in\Kt} \yy_k^*(f) \, \yy_k, \quad f\in\XX,
\]
defines a bounded linear projection from $\XX$ onto $[\XB]$. If this is the case we say that $\XB^*$ are \emph{projecting functionals} for $\XB$, and that $Q=Q[\XB,\XB^*]$ is the projection associated with the biorthogonal system $(\XB,\XB^*)$.

Given a K\"othe space $\KK$ and a family $\YB=(\yy_k)_{k\in\Kt}$ in $\KK\setminus\{0\}$, we denote by $\LL[\YB]$ the atomic lattice over $\Kt$ defined by the function norm
\[
(a_k)_{k\in \Kt} \mapsto \norm{ \enpar{ \sum_{k\in\Kt} \abs{a_n}^2 \abs{\yy_k}^2}^{1/2}}_{\KK}.
\]
This construction goes back to \cite{BCLT1985}, where it is called the latification procedure.

\begin{lemma}[\cite{AlbiacAnsorena2025}*{Theorem 4.2}]\label{lem:lattification}
Let $\KK$ be a a K\"othe space. Let $(\xx_k)_{k\in\Kt}$ be complemented unconditional family in $\KK$ with projecting functionals $(\xx_k^*)_{k\in\Kt}$. Let $\YB=(\yy_k)_{k\in\Kt}$ be a family in $\KK$ such that
\[
\inf_{k\in\Kt} \abs{ \xx_k^*(\yy_k)}>0
\]
and there is a constant $C$ such that $\abs{\yy_k} \le C \abs{\xx_k}$ for all $k\in\Kt$. Then $\YB$ is equivalent to the unit vector system of the atomic lattice $\LL[\YB]$.
\end{lemma}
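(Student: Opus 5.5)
The plan is to compare everything with the square function
\[
\Phi(\alpha):=\norm{\enpar{\sum_{k\in\Kt}\abs{a_k}^2\abs{\yy_k}^2}^{1/2}}_{\KK},\qquad \alpha=(a_k)_{k\in\Kt}\in c_{00}(\Kt).
\]
I would prove that $\norm{\sum_k a_k\,\yy_k}\approx\Phi(\alpha)$ with constants independent of $\alpha$. Two standard tools will be used.
\begin{itemize}
\item \emph{Krivine's lattice version of Khintchine's inequality} (see \cite{LinTza1979}). For any finite family $(f_k)$ in $\KK$,
\[
\Ave_{\varepsilon_k=\pm1}\norm{\sum_k \varepsilon_k f_k}\approx \norm{\enpar{\sum_k\abs{f_k}^2}^{1/2}}.
\]
\item \emph{Diagonal averaging.} For an unconditional basis $\XB$, the diagonal of an operator is an average of its conjugates by sign multipliers. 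Hence $\norm{\diag(T;\XB)}\le K^2\norm{T}$, where $K$ is the unconditional constant.
\end{itemize}

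\textbf{Averaged two-sided estimate.} Set $f=\sum_k\varepsilon_k a_k\,\yy_k$ and let $Q$ be the projection associated with $(\XB,\XB^*)$. Averaging over signs first and then using diagonal averaging on the coefficients relative to $(\xx_k)$ gives
\[
\Ave_\varepsilon\norm{Q(f)}\gtrsim\norm{\sum_k a_k\,\xx_k^*(\yy_k)\,\xx_k}.
\]
Since $\inf_k\abs{\xx_k^*(\yy_k)}>0$ and $(\xx_k)$ is unconditional, the right-hand side is $\gtrsim\norm{\sum_k a_k\,\xx_k}$. By Krivine's inequality this is $\approx\norm{(\sum_k\abs{a_k}^2\abs{\xx_k}^2)^{1/2}}$, which is $\ge C^{-1}\Phi(\alpha)$ because $\abs{\yy_k}\le C\abs{\xx_k}$. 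In the other direction, $\Ave_\varepsilon\norm{Q(f)}\le\norm{Q}\Ave_\varepsilon\norm{f}\approx\norm{Q}\,\Phi(\alpha)$, again by Krivine. Combining,
\[
\Ave_\varepsilon\norm{\sum_k\varepsilon_k a_k\,\yy_k}\approx\Phi(\alpha)\approx\norm{\sum_k a_k\,\xx_k}.
\]

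\textbf{Removing the averaging (main obstacle).} It remains to show that $(\yy_k)$ is unconditional, so that each signed sum is comparable to the average. This is the step I expect to be hardest.
\begin{itemize}
\item \emph{First attempt.} Write $\yy_k=\phi_k\,\xx_k$ with $\abs{\phi_k}\le C$, where $\phi_k=\yy_k/\xx_k$ on $\supp(\xx_k)$. Then try to realise $\xx_k\mapsto\yy_k$ as a bounded operator $[\XB]\to\KK$.
\item \emph{Problem with this attempt.} The supports of the $\xx_k$ may overlap, so no single multiplication operator implements all the $\phi_k$ at once.
\item \emph{Workaround.} Define $V\colon[\xx_k\colon k\in\Kt]\to\KK$ on finitely supported vectors by $V(\sum_k b_k\,\xx_k)=\sum_k b_k\,\yy_k$. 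To show $V$ is bounded, split signs as follows: for each fixed choice of signs $\delta$, compare $\sum_k\delta_k b_k\,\yy_k$ with the sign average, using that conjugating by $M_\delta$ does not change $\Phi$. The upper estimate then follows from the averaged inequality above together with the unconditionality of $(\xx_k)$.
\item \emph{Fallback.} If this splitting fails, I would instead show that $Q$ restricted to $[\yy_k\colon k\in\Kt]$ is an isomorphism onto a space whose basis $(Q(\yy_k))_k$ is a small perturbation of the diagonal $(\xx_k^*(\yy_k)\,\xx_k)_k$. That would transfer unconditionality from $(\xx_k)$.
\end{itemize}

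Once unconditionality of $(\yy_k)$ is in hand, the averaged estimate yields $\norm{\sum_k a_k\,\yy_k}\approx\Phi(\alpha)$ for every $\alpha\in c_{00}(\Kt)$. That is exactly the equivalence of $\YB$ with the unit vector system of $\LL[\YB]$.
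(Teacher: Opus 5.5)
\textbf{The step you flag as the main obstacle cannot be carried out, because the statement, read literally, is false.} In $\KK=\ell_2$ take, for $m\in\NN$,
\[
\xx_{2m-1}=\frac{\ee_{2m-1}+\ee_{2m}}{\sqrt{2}},\quad \xx_{2m}=\frac{\ee_{2m-1}-\ee_{2m}}{\sqrt{2}},\quad \xx_k^*=\langle \cdot,\xx_k\rangle,\quad \yy_{2m-1}=\yy_{2m}=\sqrt{2}\,\ee_{2m-1}.
\]
Here $\XB$ is an orthonormal basis, so $Q=\Id$. For every $k$ you have $\abs{\yy_k}\le 2\abs{\xx_k}$ and $\xx_k^*(\yy_k)=1$. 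Yet $\yy_1-\yy_2=0$, while $\norm{\ee_1-\ee_2}_{\LL[\YB]}=2$. So $\YB$ need not even be linearly independent, and no argument can make it unconditional.

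The example also shows where your two routes break.
\begin{itemize}
\item In the workaround, $V\colon \xx_k\mapsto\yy_k$ is bounded here but annihilates $\xx_1-\xx_2$. So the obstruction is the lower estimate, not boundedness. Moreover, ``comparing a fixed signed sum with the sign average'' is just a restatement of the unconditionality of $\YB$, so that step is circular.
\item In the fallback, the off-diagonal entries $\xx_j^*(\yy_k)$, $j\neq k$, need not be small; here $\xx_2^*(\yy_1)=1$. So $Q|_{[\YB]}$ is not a perturbation of the diagonal.
\end{itemize}
The $\YB$ in the conclusion is a slip for $\XB$. In the proof of Lemma~\ref{lem:ComD+SS} the lemma is applied with $\yy_k=\ee_{n(k)}^*(\xx_k)\,\ee_{n(k)}$, where many $k$ may share the same $n(k)$; this is why the summands $\ell_2(\At_n)$ appear. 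What is drawn from the lemma there is that $(\xx_k)_{k\in\Kt_2}$ is equivalent to the unit vector system of $\LL[\UB]$.

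\textbf{The correct statement is already in your first half.} The corrected statement, $\norm{\sum_k a_k\xx_k}\approx\Phi(\alpha)$, is already contained in your averaged two-sided estimate, so you should stop there; the extra step is unnecessary. One caveat remains. The upper half of Krivine's inequality,
\[
\Ave_\varepsilon\norm{\sum_k\varepsilon_k f_k}\lesssim\norm{\enpar{\sum_k\abs{f_k}^2}^{1/2}},
\]
needs finite cotype; it fails for the Rademacher functions in $L_\infty$. The lemma, however, is stated for arbitrary K\"othe spaces, and only the reverse inequality holds in every Banach lattice.

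To get $\norm{\sum_k a_k\xx_k}\lesssim\Phi(\alpha)$ without cotype, use duality. First, $(\xx_k^*)_{k\in\Kt}$ is unconditional, because
\[
\sum_k\varepsilon_kb_k\,\xx_k^*=(M_\varepsilon[\XB]\circ Q)^*\enpar{\sum_k b_k\,\xx_k^*}.
\]
Then, with $d_k=\xx_k^*(\yy_k)$,
\[
\norm{\sum_k a_kd_k\,\xx_k}\lesssim \sup_{\norm{\sum_k b_k\xx_k^*}\le 1}\sum_k \abs{b_k\xx_k^*}\enpar{\abs{a_k\yy_k}} \le \sup_{\norm{\sum_k b_k\xx_k^*}\le 1} \norm{\enpar{\sum_k \abs{b_k\xx_k^*}^2}^{1/2}}\,\Phi(\alpha)\lesssim\Phi(\alpha).
\]
The second inequality is the lattice Cauchy--Schwarz inequality \cite{LinTza1979}*{Proposition 1.d.2}. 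The last one is the free half of Krivine's inequality in $\KK^*$, combined with the unconditionality of $(\xx_k^*)$. Finally, since $\inf_k\abs{d_k}>0$, the left-hand side dominates $\norm{\sum_k a_k\xx_k}$.
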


The following lemma contains a substantial part of our proof of Theorem~\ref{thm:CKBanach}.

\begin{lemma}\label{lem:ComD+SS}
Let $\XB=(\xx_k)_{k\in \Kt}$ be a complemented unconditional basic sequence of a Banach space $\UU$ with a D+S unconditional basis $\UB$. If $\XB$ and $\UB$ are semi-normalized, then $\XB \subsetsim \UB$.
\end{lemma}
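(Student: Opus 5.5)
The plan is to identify $\UU$ with the atomic lattice over $\Nt$ induced by $\UB=(\uu_n)_{n\in\Nt}$, so that "$\abs{\cdot}$" makes sense. (The paper has no macro for $\uu_n$, so in source I will write $u_n$ and $u_n^*$ for the basis vectors and their dual functionals.) Let $(\xx_k^*)_{k\in\Kt}$ be projecting functionals for $\XB$ and $Q=Q[\XB,\XB^*]$ the associated projection. The aim is to find an injection $\pi\colon\Kt\to\Nt$ such that $(\xx_k)_{k\in\Kt}$ is equivalent to $(u_{\pi(k)})_{k\in\Kt}$.

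First, the trace identity. For each $k$,
\[
1=\xx_k^*(\xx_k)=\sum_{n\in\Nt} \xx_k^*(u_n)\, u_n^*(\xx_k),
\]
and the diagonal of $Q$ relative to $\UB$ is
\[
d_n=u_n^*(Q(u_n))=\sum_{k\in\Kt} u_n^*(\xx_k)\,\xx_k^*(u_n).
\]
Since $\UB$ is D+S, $Q-\diag(Q;\UB)$ is strictly singular. By Theorem~\ref{thm:AAC} and its proof, the set $\At=\{n\colon \abs{d_n}\ge 1/2\}$ satisfies that $[\XB]$ is congruent to $[\UB|\At]$, up to a finite modification of $\At$. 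I plan to use this to control where the "mass" of each $\xx_k$ sits.

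The key step is a large-coefficient selection. I want $\delta>0$ and an injective $\pi$ with
\[
\abs{\xx_k^*(u_{\pi(k)})\,u_{\pi(k)}^*(\xx_k)}\ge\delta, \qquad k\in\Kt.
\]
I intend to obtain it via Hall's marriage theorem applied to the bipartite relation
\[
k\sim n \iff \abs{\xx_k^*(u_n)u_n^*(\xx_k)}\ge\delta.
\]
Hall's condition asks that every finite $F\subseteq\Kt$ has at least $\abs{F}$ neighbours. I would verify it by summing the trace identity over $F$: the total $\abs{F}$ splits into contributions from pairs with weight $<\delta$ and from the neighbours $N(F)$. To bound the neighbour part by $C\abs{N(F)}$, I would use $\sum_{k}\abs{u_n^*(\xx_k)\xx_k^*(u_n)}\le C$ for each $n$, which follows from unconditionality of $\XB$ by averaging signs in $\sum_k \pm \xx_k^*(u_n)\xx_k$. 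The small-weight part is the genuine obstacle: in Casazza--Kalton type arguments one needs lattice convexity to discard it. Here I would try instead to exploit D+S. If Hall failed with constants degenerating, I would build disjoint finite blocks on which $Q$ acts, up to a small perturbation, as an operator whose diagonal relative to $\UB$ tends to zero but which is an isomorphism on an infinite-dimensional subspace. That contradicts Lemma~\ref{lem:nulldiag} combined with Lemma~\ref{lem:GMIdea}, and I expect this to be the main difficulty.

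Finally, I would transfer the equivalence. Set $\yy_k=u_{\pi(k)}^*(\xx_k)\,u_{\pi(k)}$. These are disjointly supported, satisfy $\abs{\yy_k}\le C\abs{\xx_k}$, and have $\abs{\xx_k^*(\yy_k)}\ge\delta$. Lemma~\ref{lem:lattification} then shows that $(\yy_k)$, hence $(u_{\pi(k)})$ because the $\yy_k$ are disjoint and semi-normalized, is equivalent to the unit vectors of $\LL[\YB]$. To connect this with $\XB$ itself, I would show that the map
\[
f\mapsto \sum_k \xx_k^*(f)\,\yy_k
\]
is bounded on $[\XB]$, by a lattice domination argument, and is invertible via $\sum_k u_{\pi(k)}^*(g)\,\xx_k/u_{\pi(k)}^*(\xx_k)$. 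The latter is again a diagonal-type operator, so I would control it with the D+S property applied to a suitable composition with $Q$ together with Theorem~\ref{cor:Fred+SS}, possibly after discarding finitely many indices.
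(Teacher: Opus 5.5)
\textbf{There is a genuine gap: the matching you look for via Hall's theorem does not exist in general, and your counting argument could not produce injectivity even where it does exist.}

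\emph{The key step is false as stated.} Take any $\UU$ with a D+S basis $(u_n)_{n=1}^\infty$, for instance $\GG$. Set $\xx_1=u_1+u_2$, $\xx_2=u_1+u_3$, and $\xx_k=u_k$ for $k\ge 4$. Use the projecting functionals $\xx_1^*=u_1^*-u_3^*$, $\xx_2^*=u_1^*-u_2^*$, and $\xx_k^*=u_k^*$. This is a semi-normalized complemented unconditional basic sequence. Yet $\xx_k^*(u_n)\,u_n^*(\xx_k)=\delta_{n,1}$ for $k=1,2$. So for every $\delta>0$ the set $F=\{1,2\}$ has at most one neighbour, Hall's condition fails, and no injective $\pi$ with uniformly large weights exists, even though $\XB\subsetsim\UB$ holds here.

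Any correct argument must therefore discard finitely many indices $k$ and re-insert them at the end. Re-inserting them requires enough unused indices in $\Nt$, which is exactly where Theorem~\ref{thm:GMHS} enters the paper's final bookkeeping. Your proposal never invokes it.

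\emph{Injectivity needs a different operator, even off a finite set.} Write $c_{k,n}=\xx_k^*(u_n)u_n^*(\xx_k)$. Your bound $\sum_k\abs{c_{k,n}}\le C$ only shows that a peak map $k\mapsto n(k)$ with $\abs{c_{k,n(k)}}\ge\delta$ has fibres of size at most $C/\delta$. That gives bounded multiplicity, never $\abs{N(F)}\ge\abs{F}$.

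The D+S contradiction you sketch, an operator built from $Q$ whose diagonal tends to zero, is the right tool for the \emph{existence} of peaks, i.e.\ $\inf_k\max_n\abs{c_{k,n}}>0$. The paper does this by restricting $Q$ to an infinite $\Jt\subseteq\Kt$ with $\sum_{k\in\Jt}\max_n\abs{c_{k,n}}<\infty$. The diagonal of that projection is null by dominated convergence, contradicting Lemma~\ref{lem:nulldiag}.

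That tool cannot detect collisions, though. Suppose infinitely many pairs $k\neq k'$ peaked at a common $n$, say copies of the configuration above on disjoint triples. Then the diagonal of $Q$ at those $n$ would equal $2$, so no vanishing-diagonal argument about $Q$ applies.

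The missing idea is to apply D+S to a different operator.
\begin{itemize}
\item By Lemma~\ref{lem:lattification}, cofinitely many $\xx_k$ are equivalent to the unit vectors of $\TT=\bigl(\bigoplus_n\ell_2(\At_n)\bigr)_{\KK}$. Here $\At_n=\enbrace{k\colon n(k)=n}$ and $\KK$ is the lattice of $\UB$.
\item Let $S\colon\TT\to\UU$ be an isomorphic embedding. Whenever $\abs{\At_n}\ge2$, choose a unit vector $f_n\in\ell_2(\At_n)$ with $u_n^*(S(f_n))=0$.
\item Put $\Nt_1=\enbrace{n\colon\abs{\At_n}\ge 2}$. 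The map $u_n\mapsto S(f_n)$, $n\in\Nt_1$, is an isomorphic embedding of $[\UB|\Nt_1]$ with zero diagonal.
\item By Lemma~\ref{lem:nulldiag} this forces $\Nt_1$ to be finite.
\end{itemize}
Combined with your fibre bound, this gives injectivity of $n(\cdot)$ off a finite set. Your fibre bound is worth keeping: it makes the paper's detour through the uniqueness of the unconditional basis of $\ell_2$ unnecessary.

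Finally, you do not need D+S or Fredholm theory to invert $f\mapsto\sum_k\xx_k^*(f)\yy_k$. For disjoint $\yy_k$, averaging over signs in $Q\bigl(\sum_k\pm a_k\yy_k\bigr)$ gives the lower estimate directly. This is what Lemma~\ref{lem:lattification} packages.
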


\begin{proof}
We can replace $\UU$ with a minimal normalized atomic lattice $\KK$, and $\UB$ with its unit vector system $(\ee_n)_{n\in\Nt}$. Let $(\xx_k^*)_{k\in\Kt}$ be projecting functionals for $\XB$. Consider the matrices
\[
a_{k,n}=\xx_k^*(\ee_n), \, b_{k,n} = \ee_n^*(\xx_k),\, c_{k,n}=a_{k,n}b_{k,n} \quad k \in\Kt, \, n\in \Nt,
\]
and its rows $\alpha_k=(a_{k,n})_{n\in\Nt}$, $\beta_k=(a_{k,n})_{n\in\Nt}$ and $\gamma_k=(c_{k,n})_{n\in\Nt}$, $k\in\Kt$.
Since $(\alpha_k)_{k\in\Kt}$ is a bounded family in $\ell_\infty(\Nt)$ and $(\beta_k)_{k\in\Kt}$, is a bounded family in $c_0(\Nt)$, $(\gamma_k)_{k\in\Kt}$ is a bounded family in $c_0(\Nt)$. Set
\[
C_k=\norm{\gamma_k}_\infty=\max_{n\in\Nt} \abs{c_{k,n}}, \quad k\in\Kt.
\]
Assume by contradiction that $\liminf_{k\in \Kt} C_k=0$. Then, there is $\Jt\subseteq \Nt$ infinite such that $\sum_{k\in\Jt} C_k<\infty$. Set $\YB=(\xx_k)_{k\in\Jt}$ and $\YB^*=(\xx_k^*)_{k\in\Jt}$. The diagonal $\dig(Q;\XB)=(d_n)_{n\in\Nt}$ of the projection $Q=Q[\YB,\YB^*]$ is given by
\[
d_n= \sum_{k\in\Jt} c_{k,n}, \quad n\in\Nt.
\]
By the dominated converge theorem, $\lim_{n\in\Nt} d_n=0$. Therefore, by Lemma~\ref{lem:nulldiag}, $Q$ is strictly singular. Since $Q$ is the indentity on $[\YB]$, we reach an absurdity.

This contradiction proves that the families $\gamma_k$, $k\in\Kt$, eventually peak, that is, there is a partition $(\Kt_1,\Kt_2)$ of $\Kt$ such that $\abs{\Kt_1}<\infty$ and for each $k\in\Kt_2$ there is $n(k)\in\Nt$ such that
\[
\inf_{k\in\Kt_2} \abs{c_{k,n(k)}}>0.
\]
Consequently,
\[
\inf_{k\in\Kt_2} \abs{a_{k,n(k)}}>0, \quad \inf_{k\in\Kt_2} \abs{b_{k,n(k)}}>0.
\]
Set $\UB=(\ee_{n(k)})_{k\in\Kt_2}$ and
\[
\At_n=\enbrace{k\in\Kt_2 \colon n(k)=n}, \quad n\in\Nt.
\]
By Lemma~\ref{lem:lattification}, $(\xx_n)_{n\in\Kt_2}$ is equivalent to the unit vector system of $\LL[\UB]$. In turn, $\LL[\UB]$ is naturally lattice isometric to
\[
\TT:=\enpar{\bigoplus_{n\in\Nt} \ell_2(\At_n)}_{\KK}.
\]

Since $[\XB | \Kt_1] \oplus \TT$ is isomorphic to $\KK$, there is an isomorphic embedding $S\colon \TT\to \KK$.
For each $n\in\Nt$, let $L_n\colon \ell_2(A_n)\to \TT$, $n\in\Nt$, be the canonical embedding, and set
\[
\varphi_n=\ee_{n}^* \circ S \circ L_n.
\]
Since $\varphi_n$ is a functional on $\ell_2(\At_n)$, $\Ker(\varphi_n)$ is nonnull as long as
\[
n\in \Nt_1:=\enbrace{n \in \Nt \colon \abs{\At_n}\ge 2}.
\]
Assume by contradiction that $\Nt_1$ is infinite. Pick for each $n\in\Nt_1$ $f_n\in \ell_2(\At_n)$ such that $\varphi_n(f_n)=0$ and $\norm{f_n}=1$. There is an isometric embedding
\[
R \colon \KK[\Nt_1] \to \TT
\]
such that $R(\ee_n)=f_n$ for all $n\in\Nt_1$. On the one hand $S\circ R$ is an isomorphic embedding. On the other hand, by construction, $\diag(S\circ R)=0$. Consequenty, $S\circ R$ is strictly singular by Lemma~\ref{lem:nulldiag}.

This absurdity shows that $\Nt_1$ is finite. Hence, if
\[
\Nt_2=\enbrace{n \in \Nt \colon \abs{\At_n}=1}, \quad
\At=\sqcup_{n\in\Nt_1} \At_n,
\]
the unit vector system of $\TT$ is permutatively equivalent to the unit vector system of $\ell_2(\At)\oplus \KK[\Nt_2]$. Consequently, if $\Dt=\At\sqcup \Kt_1$, $\XB$ is permutatively equivalent to the unit vector system of
$\ell_2(\Dt)\oplus \KK[\Nt_2]$.

Assume by contradiction that $\Dt$ is infinite. Then, by Theorem~\ref{thm:AAC}, there is $\St\subseteq\Nt$ such that $\KK[\St] \simeq\ell_2$. Since, $\ell_2$ has a unique unconditional basis, $(\ee_n)_{n\in\St}$ is permutatively equivalent to the unit vector system of $\ell_2(\St)$. In particular, $(\ee_n)_{n\in\St}\sim (\ee_n)_{n\in\Ht}$ for every $\Ht\subseteq\St$ infinite. Since this assertion contradicts Lemma~\ref{lem:DSSTrivalEquivalence}, $\Dt$ is finite. Set
\[
k=\abs{\Nt \setminus\Nt_2}-\abs{\Dt} \in\ZZ\cup\{\infty\}.
\]
If $k<0$, then there would be $\Mt\subseteq \Kt$ with $\abs{\Kt\setminus \Mt}=-k$ such that $(\xx_n)_{n\in\Mt}$ would be equivalent to the unit vector system of $\KK$. Hence, $\KK$ there would be isomorphic to $[\XB|\Mt]$. Since this assertion contradicts Theorem~\ref{thm:GMHS}, $k\ge 0$, whence there is $\Nt_2\subseteq\Nt_3\subseteq\Nt$ such that $\abs{\Nt_3\setminus \Nt_2}=k$. We infer that $\XB$ is permutatively equivalent to the unit vector system of $\KK[\Nt_3]$.\end{proof}

\begin{theorem}\label{thm:Gathered}
Let $\XX$ be a Banach space with a D+S unconditional basis $\XB$. Then every complemented subspace $\YY$ of $\XX$ has a unique unconditional basis $\YB$. Besides, if $\XB$ and $\YB$ are semi-normalized, then $\YB\subsetsim\XB$.
\end{theorem}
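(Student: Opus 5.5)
We will derive Theorem~\ref{thm:Gathered} from Theorem~\ref{thm:AAC}, Lemma~\ref{lem:ComD+SS} and Lemma~\ref{lem:DSSTrivalEquivalence}, together with the Schröder--Bernstein principle of Theorem~\ref{thm:SBUB}.

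\emph{Step 1: existence and the embedding $\YB\subsetsim\XB$.} Let $\YY$ be complemented in $\XX$. By Theorem~\ref{thm:AAC}, there is an automorphism of $\XX$ sending $\YY$ onto $[\XB|\At]$ for some $\At\subseteq\Nt$. Hence $\YY\simeq[\XB|\At]$, and $\YY$ has the semi-normalized unconditional basis $(\xx_n)_{n\in\At}$. Now let $\YB$ be any semi-normalized unconditional basis of $\YY$. Transported through an isomorphism $\YY\to\XX$ onto a complemented subspace, $\YB$ becomes a complemented semi-normalized unconditional basic sequence in $\XX$. Lemma~\ref{lem:ComD+SS} then yields $\YB\subsetsim\XB$. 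This proves the second assertion.

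\emph{Step 2: uniqueness.} Let $\YB_1$ and $\YB_2$ be semi-normalized unconditional bases of $\YY$. By Step~1, each is permutatively equivalent to a subbasis of $\XB$: say $\YB_1\sim(\xx_n)_{n\in\Mt}$ and $\YB_2\sim(\xx_n)_{n\in\Kt}$. It suffices to show $(\xx_n)_{n\in\Mt}\sim(\xx_n)_{n\in\Kt}$.

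Both $[\XB|\Mt]$ and $[\XB|\Kt]$ are isomorphic to $\YY$. The D+S property does not pass to them in an obvious way, so instead we work inside $\XX$. Let $T\colon[\XB|\Mt]\to[\XB|\Kt]\subseteq\XX$ be an isomorphism. By Lemma~\ref{lem:GMIdea}, $\liminf\dig(T;\XB)>0$. Since $T$ maps into $[\XB|\Kt]$, its diagonal entries vanish at every $n\in\Mt\setminus\Kt$. Hence $\Mt\setminus\Kt$ is finite, and symmetrically $\Kt\setminus\Mt$ is finite.

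Next, compare dimensions. The diagonal of $T$ is a Fredholm operator from $[\XB|\Mt]$ to $[\XB|\Kt]$, obtained as a bounded-below diagonal on $\Mt\cap\Kt$ plus finite-rank pieces. Its index is
\[
\abs{\Mt\setminus\Kt}-\abs{\Kt\setminus\Mt}.
\]
The difference between $T$ and this diagonal operator is strictly singular by the D+S property; to see this, compose with the canonical projections and use Lemma~\ref{lem:nulldiag}. By Theorem~\ref{cor:Fred+SS}, $T$ has the same index, which is $0$ because $T$ is an isomorphism. So $\abs{\Mt\setminus\Kt}=\abs{\Kt\setminus\Mt}<\infty$.

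Then a bijection $\Mt\to\Kt$ that is the identity on $\Mt\cap\Kt$ exchanges finitely many vectors. Since changing finitely many vectors of a semi-normalized unconditional basis preserves equivalence, $(\xx_n)_{n\in\Mt}\sim(\xx_n)_{n\in\Kt}$. Therefore $\YB_1\sim\YB_2$.

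\emph{Main obstacle.} The delicate point is the index computation in Step~2. We must check that $T-\diag(T;\XB)$, with $\diag$ taken relative to $\XB$ and restricted to $[\XB|\Mt]$, is strictly singular as a map into $\XX$. We must also check that the index is computed correctly when $T$ is viewed as a map into $[\XB|\Kt]$ rather than into $\XX$. For the first check, we would extend $T$ by zero on $[\XB|\Nt\setminus\Mt]$ and apply the D+S definition. For the second, we would compose with the projection $S_\Kt[\XB]$, which fixes $T$ and only restricts the diagonal to $\Kt$.
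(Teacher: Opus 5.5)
Your argument is correct, but Step~2 takes a different route from the paper. The paper begins by noting that the D+S property \emph{does} pass to subbases, and trivially so. For $T\in\Bt([\XB|\At])$, the operator $T-\diag(T;\XB)$ has null diagonal and is therefore strictly singular by Lemma~\ref{lem:nulldiag}. Combined with Theorem~\ref{thm:AAC}, this reduces the statement to spaces that themselves have a D+S basis. There, for any semi-normalized unconditional basis $\YB$, Lemma~\ref{lem:ComD+SS} gives $\YB\sim\UB$ for a subbasis $\UB$ of $\XB$, and Theorem~\ref{thm:GMHS} forces $\UB=\XB$.

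You instead stay in the ambient space and realize both bases as subbases indexed by $\Mt$ and $\Kt$. You then run a two-subset version of the Fredholm argument behind Theorem~\ref{thm:GMHS}. Lemma~\ref{lem:GMIdea}, applied to $T$ and $T^{-1}$, makes $\Mt\setminus\Kt$ and $\Kt\setminus\Mt$ finite, and index invariance (Theorem~\ref{cor:Fred+SS}) equates their cardinalities. This costs a little more work, but it proves Theorem~\ref{thm:SCS} directly. The paper only obtains that result afterwards, from Theorem~\ref{thm:Gathered} and Lemma~\ref{lem:DSSTrivalEquivalence}. The paper's route is shorter because comparing a subbasis with the whole basis is exactly the case Theorem~\ref{thm:GMHS} already handles.

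Two minor remarks. First, Theorem~\ref{thm:SBUB} and Lemma~\ref{lem:DSSTrivalEquivalence}, announced at the outset, are never actually used. Second, the diagonal of $T$ on $\Mt\cap\Kt$ need not be bounded below, since finitely many entries may vanish. Those entries contribute equally to kernel and cokernel, so your index formula $\abs{\Mt\setminus\Kt}-\abs{\Kt\setminus\Mt}$ stands.
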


\begin{proof}
Property D+S passes to subbases by Lemma~\ref{lem:nulldiag}. So, by Theorem~\ref{thm:AAC}, it suffices to prove that $\XX$ has a unique unconditional basis. Let $\YB$ be another seminormalized unconditional basis of $\XX$. By Lemma~\ref{lem:ComD+SS}, there is a subbasis $\UB$ of $\XB$ such that $\YB\sim \UB$. Hence, $\XX\simeq [\UB]$. By Theorem~\ref{thm:GMHS}, $\UB=\XB$.
\end{proof}

The following result, combined with Theorem~\ref{thm:AAC}, settles the structure of complemented subspaces of Banach spaces with a D+S unconditional basis.

\begin{theorem}\label{thm:SCS}
Let $\XB=(\xx_n)_{n\in\Nt}$ be a D+S semi-normalized unconditional basis of a Banach space. Suppose that $\Mt$ and $\Kt$ are subsets of $\Nt$ with $[\XB \mid \Mt] \simeq [\XB \mid \Kt]$. Then
\[
\abs{\Mt\setminus\Nt}=\abs{\Nt\setminus\Mt}<\infty.
\]
\end{theorem}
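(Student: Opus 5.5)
The conclusion as printed contains a typo, as in Lemma~\ref{lem:DSSTrivalEquivalence}. The intended statement is $\abs{\Mt\setminus\Kt}=\abs{\Kt\setminus\Mt}<\infty$, and this is what I will prove. The plan is to reduce the isomorphism of spaces to a permutative equivalence of subbases, using the uniqueness results already obtained, and then to invoke Lemma~\ref{lem:DSSTrivalEquivalence}.

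First, property D+S passes to subbases by Lemma~\ref{lem:nulldiag}. Hence $(\xx_n)_{n\in\Kt}$ is a semi-normalized D+S unconditional basis of $[\XB\mid\Kt]$. Let $T\colon[\XB\mid\Mt]\to[\XB\mid\Kt]$ be an isomorphism. Then $\YB:=(T(\xx_n))_{n\in\Mt}$ is a semi-normalized unconditional basis of $[\XB\mid\Kt]$. By Theorem~\ref{thm:Gathered}, applied to the space $[\XB\mid\Kt]$, which has a unique unconditional basis, we get $\YB\sim(\xx_n)_{n\in\Kt}$. Since $\YB$ is equivalent to $(\xx_n)_{n\in\Mt}$ via $T$, it follows that $(\xx_n)_{n\in\Mt}\sim(\xx_n)_{n\in\Kt}$ as subfamilies of $\XB$.

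To be careful about the precise meaning of $\sim$: the conclusion gives a bijection $\pi\colon\Mt\to\Kt$ together with an isomorphism of $[\XB\mid\Mt]$ onto $[\XB\mid\Kt]$ sending $\xx_n$ to $\xx_{\pi(n)}$. This is exactly the hypothesis of Lemma~\ref{lem:DSSTrivalEquivalence} for the pair $\Mt,\Kt$. Applying that lemma gives the claim directly.

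The lemma's proof also shows where the set identity comes from. The induced operator $S$ has diagonal $(\delta_{\pi(n),n})_{n\in\Mt}$, and Lemma~\ref{lem:GMIdea} forces $\pi(n)=n$ outside a finite set $F\subseteq\Mt$. Consequently $\Mt\setminus\Kt\subseteq F$ and $\Kt\setminus\Mt\subseteq\pi(F)$. Because $\pi$ is a bijection that restricts to the identity off $F$, it maps $F$ bijectively onto $\pi(F)=(\Kt\setminus\Mt)\cup(F\cap\Kt)$. Counting then gives $\abs{\Mt\setminus\Kt}=\abs{\Kt\setminus\Mt}$.

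The main obstacle is the first step. It requires Theorem~\ref{thm:Gathered} in its full strength, namely that the complemented subspace $[\XB\mid\Kt]$ has a unique unconditional basis up to permutation. We must make sure this applies with the D+S subbasis $(\xx_n)_{n\in\Kt}$ as the reference basis. If one prefers to avoid this, Lemma~\ref{lem:ComD+SS} applied inside $[\XB\mid\Kt]$ gives $\YB\subsetsim(\xx_n)_{n\in\Kt}$, so $(\xx_n)_{n\in\Mt}$ is equivalent to $(\xx_n)_{n\in\Kt'}$ for some $\Kt'\subseteq\Kt$. Theorem~\ref{thm:GMHS} applied to $[\XB\mid\Kt]$ then forces $\Kt'=\Kt$, and the argument continues as above.
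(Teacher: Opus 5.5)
Your proposal is correct and follows the paper's own route. The paper's proof simply combines Theorem~\ref{thm:Gathered}, giving uniqueness of unconditional basis in the complemented subspace $[\XB\mid\Kt]$, with Lemma~\ref{lem:DSSTrivalEquivalence}, exactly as you do; you are also right that the conclusion should read $\abs{\Mt\setminus\Kt}=\abs{\Kt\setminus\Mt}<\infty$, and your counting argument with the finite set $F$ fills in a step the paper leaves implicit.
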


\begin{proof}
Just combine Theorem~\ref{thm:Gathered} with Lemma~\ref{lem:DSSTrivalEquivalence}.
\end{proof}

The following consequence of Theorem~\ref{thm:SCS} tells us that complemented subspaces of Banach spaces with a D+S unconditional basis exhibit no self-similarity.

\begin{corollary}\label{cor:HSNotEmbed}
Let $\XX$ be a Banach space with a D+S unconditional basis $\XB=(\xx_n)_{n\in\Nt}$. Let $\YY$ be a Banach space which is isomorphic to some of its proper complemented subspaces. Then $\YY$ does not complementably embed into $\XX$. In particular, if $\YY$ is hyperplane-sum stable or power stable, it does not complementably embed into $\XX$.
\end{corollary}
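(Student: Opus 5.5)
Assume, towards a contradiction, that $\YY$ complementably embeds into $\XX$. By hypothesis $\YY$ is isomorphic to a proper complemented subspace $\YY_0$ of itself. The strategy is to push both $\YY$ and $\YY_0$ into $\XX$ as complemented subspaces spanned by subbases of $\XB$, and then let Theorem~\ref{thm:SCS} rule out the configuration.

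First, $\YY$ is isomorphic to a complemented subspace of $\XX$, so by Theorem~\ref{thm:AAC} there is $\Mt\subseteq\Nt$ with $\YY\simeq[\XB|\Mt]$. By Lemma~\ref{lem:nulldiag}, the subbasis $(\xx_n)_{n\in\Mt}$ is again a D+S semi-normalized unconditional basis of $[\XB|\Mt]$: any operator on $[\XB|\Mt]$, composed with the inclusion and the coordinate projection, minus its diagonal has null diagonal, hence is strictly singular.

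Next, $\YY_0$ is complemented in $\YY\simeq[\XB|\Mt]$ and is a proper subspace. Applying Theorem~\ref{thm:AAC} inside $[\XB|\Mt]$ gives $\Kt\subseteq\Mt$ such that the image of $\YY_0$ is congruent to $[\XB|\Kt]$, with complement isomorphic to $[\XB|\Mt\setminus\Kt]$. Since $\YY_0\ne\YY$, the complement is nonzero, so $\Mt\setminus\Kt\neq\emptyset$.

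The isomorphism $\YY\simeq\YY_0$ then yields $[\XB|\Mt]\simeq[\XB|\Kt]$. Theorem~\ref{thm:SCS}, applied to the D+S basis $(\xx_n)_{n\in\Mt}$ (or directly to $\XB$), forces $\abs{\Mt\setminus\Kt}=\abs{\Kt\setminus\Mt}$. But $\Kt\subseteq\Mt$ gives $\Kt\setminus\Mt=\emptyset$, while $\Mt\setminus\Kt\ne\emptyset$. This is a contradiction. Alternatively, one can note that $[\XB|\Kt]$ is a proper subspace of $[\XB|\Mt]$ isomorphic to it, which contradicts Theorem~\ref{thm:GMHS} applied to the D+S basis $(\xx_n)_{n\in\Mt}$.

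For the last assertion:
\begin{itemize}
\item If $\XX\oplus\FF^h\simeq\YY$ with $h\ge1$ (hyperplane-sum stability), then $\YY$ is isomorphic to the proper complemented subspace obtained by removing the $\FF^h$ summand.
\item If $\YY^m\simeq\YY$ with $m\ge2$ (power stability), then $\YY^{m-1}$ is a proper complemented subspace of $\YY^m\simeq\YY$ that is isomorphic to it. Indeed, $\YY^{m-1}\simeq\YY^{m-1}$, and transporting along $\YY\simeq\YY^m$ turns it into a proper complemented subspace of $\YY$ isomorphic to $\YY^{m-1}$. Since $\YY^{m-1}\oplus\YY\simeq\YY$, we also get $\YY^{m-1}\simeq\YY^{m-1+(m-1)k}$ by iterating, and in particular $\YY\simeq\YY^{m}\simeq\YY^{m-1}$ after suitable iteration when $m\ge 2$.
\end{itemize}
The main obstacle is this bookkeeping: exhibiting a proper complemented subspace of $\YY$ that is genuinely isomorphic to $\YY$, which is precisely the self-similarity hypothesis of the statement.
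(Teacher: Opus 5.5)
Your argument for the main assertion is correct and is the paper's proof. You use Theorem~\ref{thm:AAC} to realize $\YY$ and its proper complemented copy as $[\XB|\Mt]$ and $[\XB|\Kt]$ with $\Kt\subsetneq\Mt$, then conclude with Theorem~\ref{thm:SCS}. You also make explicit a step the paper compresses: D+S passes to the subbasis $(\xx_n)_{n\in\Mt}$ by Lemma~\ref{lem:nulldiag}, so Theorem~\ref{thm:AAC} can be applied inside $[\XB|\Mt]$.

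Your alternative ending through Theorem~\ref{thm:GMHS} is also valid, and lighter. It avoids Theorem~\ref{thm:SCS} and the uniqueness machinery behind it. It needs neither the complementation of $\YY_0$ nor a second application of Theorem~\ref{thm:AAC}, because the image of $\YY_0$ under $\YY\simeq[\XB|\Mt]$ is already a proper subspace of $[\XB|\Mt]$ isomorphic to it.

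The genuine error is in the power-stable case. You take $\YY^{m-1}$ as the proper complemented subspace and then assert that $\YY\simeq\YY^{m-1}$ ``after suitable iteration''. From $\YY\simeq\YY^m$ you can only deduce $\YY^{j}\simeq\YY^{j+(m-1)k}$. So the powers you can show isomorphic to $\YY$ are the $\YY^{1+(m-1)k}$, and $m-1$ has this form only when $m=2$. The claim is false in general: Gowers' space $\MM$ with $s(\MM)=2$, quoted in the paper, satisfies $\MM^3\simeq\MM$ but $\MM^2\not\simeq\MM$. As written, you have only covered square stable $\YY$.

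The repair is to use the other summand, exactly as in your hyperplane case. In $\YY^m=\YY\oplus\YY^{m-1}$, the copy $\YY\oplus\{0\}$ is a proper complemented subspace isomorphic to $\YY\simeq\YY^m$. Transporting it along an isomorphism $\YY^m\to\YY$ gives a proper complemented subspace of $\YY$ isomorphic to $\YY$. Incidentally, in the hyperplane case $\XX\oplus\FF^h\simeq\YY$ should read $\YY\oplus\FF^h\simeq\YY$.
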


\begin{proof}
Assume that $\XB$ is semi-normalized. Let $\UU$ be a proper complemented subspace of $\UU$. Suppose that $\YY \trianglelefteq \XX$. By Theorem~\ref{thm:AAC}, there are $\Bt\subsetneq\At\subseteq\Nt$ such that $\YY\simeq [\XB|\At]$ and $\UU\simeq [\XB | \Bt ]$. By Theorem~\ref{thm:SCS}, $\YY$ and $\UU$ are not isomorphic.
\end{proof}

Since unconditional bases of reflexive Banach spaces are shrinking, the following result shows the relevance in assuming the basis to be shrinking in Lemma~\ref{lem:SS:B}. This condition seems to be missing from the discussion starting \cite{GowersMaurey1997}*{Section 5.1}.

\begin{corollary}
Let $\XX$ be a Banach space with a D+S unconditional basis $\XB$. Then $\XX$ is reflexive.
\end{corollary}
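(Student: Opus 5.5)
We aim to show that $\XX$ contains no copy of $c_0$ or $\ell_1$ and then apply James's theorem: a Banach space with an unconditional basis is reflexive if and only if it contains no subspace isomorphic to $c_0$ or to $\ell_1$. By James's theorem the basis $\XB$ is then both shrinking and boundedly complete. We argue by contradiction and suppose $\XX$ contains $c_0$ or $\ell_1$. By the Bessaga--Pełczyński selection principle and the standard gliding hump argument behind Lemma~\ref{lem:SMBBS}, we may take a block basic sequence $\YB=(y_j)_{j=1}^\infty$ of $\XB$ (after enumerating $\Nt$ as $\NN$) that is equivalent to the unit vector basis of $c_0$ or of $\ell_1$. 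The first key step is to make $[\YB]$ complemented in $\XX$.

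\emph{Case $c_0$.} Since $\XX$ is separable, Sobczyk's theorem makes every copy of $c_0$ complemented, so $[\YB]\simeq c_0$ is a complemented subspace of $\XX$.

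\emph{Case $\ell_1$.} In a space with an unconditional basis, a block sequence equivalent to the $\ell_1$ basis spans a complemented subspace. Write each $y_j$ in the lattice $\KK$ as a disjointly supported element, choose norming functionals $y_j^*$ supported on $\supp(y_j)$, and consider
\[
P(f)=\sum_j y_j^*(f)\,y_j .
\]
This map is bounded, because for disjoint $y_j^*$ the $\ell_1$ lower estimate on $(y_j)$ dualizes, in the manner of Lemma~\ref{lem:KrivineB}, to the bound $\sup_j\abs{y_j^*(f)}\le C\norm{f}$. Hence $P$ is a projection onto $[\YB]\simeq\ell_1$. Alternatively, one can rely on the known fact that $\ell_1$ (respectively $c_0$) embeds complementably in any space with an unconditional basis that contains it.

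In either case some $\YY\in\{c_0,\ell_1\}$ satisfies $\YY\trianglelefteq\XX$. But $c_0$ and $\ell_1$ are square stable, since $c_0\simeq c_0\oplus c_0$ and $\ell_1\simeq\ell_1\oplus\ell_1$, and they are also isomorphic to their hyperplanes. Corollary~\ref{cor:HSNotEmbed} therefore forbids $\YY\trianglelefteq\XX$, which is a contradiction. As a variant, Theorem~\ref{thm:Gathered} says $\YY$ would have a unique unconditional basis $\YB\subsetsim\XB$, and Theorem~\ref{thm:SCS} would be contradicted by the self-similarity of the unit vector basis, which is equivalent to any of its infinite subsequences. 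Thus $\XX$ contains neither $c_0$ nor $\ell_1$, and James's theorem gives that $\XX$ is reflexive.

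The main obstacle is the complementation step in the $\ell_1$ case, namely justifying that the functionals $y_j^*$ yield a bounded projection. If the direct lattice-duality argument becomes delicate, the fallback is the known result that an unconditional basis which is not boundedly complete forces a complemented block copy of $c_0$, while one which is not shrinking forces a complemented copy of $\ell_1$ (Lindenstrauss--Tzafriri, 1.c). Citing that result gives the complemented copies directly and finishes the proof with Corollary~\ref{cor:HSNotEmbed}.
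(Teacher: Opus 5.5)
Your overall route is the paper's: Corollary~\ref{cor:HSNotEmbed} excludes complemented copies of the square-stable spaces $c_0$ and $\ell_1$, and James's theorem then gives reflexivity. The paper simply cites \cite{James1950} for the fact that a non-reflexive space with an unconditional basis contains a \emph{complemented} copy of $c_0$ or $\ell_1$; this is exactly your fallback. The $c_0$ case via Sobczyk is fine.

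Your direct argument in the $\ell_1$ case, however, fails as written.

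\emph{Why it fails.} Because $(y_j)$ is equivalent to the $\ell_1$ basis, $\norm{P(f)}\approx\sum_j\abs{y_j^*(f)}$, so you need $\sum_j\abs{y_j^*(f)}\le C\norm{f}$. The bound $\sup_j\abs{y_j^*(f)}\le C\norm{f}$ is trivial and does not give this. Lemma~\ref{lem:KrivineB} cannot supply more, since with $p=1$ it assumes a lower $1$-estimate for the whole lattice, not just for $(y_j)$.

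\emph{A counterexample.} Arbitrary norming functionals genuinely do not work. Take the lattice of pairs $(a,b)$ normed by $\max\{\norm{a}_1,\norm{b}_2\}$ and $y_j=(\ee_j,\ee_j)$, which are isometrically equivalent to the $\ell_1$ basis. The functional $y_j^*\colon(a,b)\mapsto b_j$ norms $y_j$ and is supported in $\supp(y_j)$. Yet $\norm{P(0,b)}=\norm{b}_1$ while $\norm{(0,b)}=\norm{b}_2$, so $P$ is unbounded.

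\emph{The repair.} The functionals must be pieces of a single functional, which is what James's argument provides. If $\XB$ is not shrinking, there are $x^*\in\XX^*$, $\varepsilon>0$, and a normalized block sequence $(u_j)$ with $\supp(u_j)\subseteq J_j$ for disjoint intervals $J_j$ and $\abs{x^*(u_j)}\ge\varepsilon$. Then $(u_j)$ is equivalent to the $\ell_1$ basis. Unconditionality (constant $K$) gives $\sum_j\abs{x^*(S_{J_j}[\XB](f))}\le K\norm{x^*}\norm{f}$, so
\[
f\mapsto\sum_j\frac{x^*(S_{J_j}[\XB](f))}{x^*(u_j)}\,u_j
\]
is a bounded projection onto $[u_j]$. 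This also removes the need to extract an $\ell_1$ block basis from an arbitrary copy of $\ell_1$. There Bessaga--Pe{\l}czy\'{n}ski does not apply directly, because the $\ell_1$ basis is not weakly null; you would first have to pass to differences $z_{2k-1}-z_{2k}$ as in Lemma~\ref{lem:SS:A}.

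With this repair, or with your fallback citation, the proof is complete.
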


\begin{proof}
By Corollary~\ref{cor:HSNotEmbed}, neither $\ell_1$ nor $c_0$ complementably embeds into $\XX$. It is known \cite{James1950} that this implies $\XX$ to be reflexive.
\end{proof}
\section{Uniqueness of unconditional basis of Gowers space and its convexifications}\label{sect:main}\noindent
The Banach space constructed by Gowers in \cite{Gowers1994hyp} to solve Banach's hyperplane problem is an atomic lattice. In this section we solve in the positive Questions~\ref{qt:AA}, \ref{qt:UTAPCloselp}, \ref{qt:lpFC}, and \ref{qt:BCLT} by showing that Gowers space $\GG$ and its $p$-convexified spaces $\GG^{(p)}$, $1\le p<\infty$, have a unique unconditional basis.

For the reader's ease we introduce the specific terminology and properties related to $\GG$ we will use to address this task.

Recall that a function $f\colon[1,\infty)\to[1,\infty)$ belongs to he class $\Ft$ introduced by Schlumprecht \cite{Schlumprecht1991} if
\begin{itemize}
\item $f(1)=1$ and $f(x)<x$ for for $x\in(1,\infty)$,
\item $\lim_{x\to\infty} f(x)=\infty$ and $\lim_{x\to\infty} x^{-q} f(x)=0$ for all $q>0$,
\item $f(xy)\le f(x) f(y)$ for all $x$, $y\in[1,\infty)$, and
\item the function $x\mapsto x/f(x)$ is concave.
\end{itemize}

Let $\GG[f]$ be the Banach space constructed in the same way as in \cite{Gowers1994hyp} from a given function $f\in\Ft$.
Gowers space $\GG$ is just $\GG[f]$ in the case when $f(x)=\log_2(1+x)$ for $x\ge 1$.
In general, $\GG[f]$ is a minimal normalized K\"othe space over $\NN$ for any $f\in\Ft$. The mere definition of $\GG[f]$ implies that
\begin{equation*}
\sum_{j=1}^n \norm{y_j} \le f(n) \norm{\sum_{j=1}^n y_j }
\end{equation*}
for all block sequences $(y_j)_{j=1}^n$ relative to the unit vector system $\Et=(\ee_n)_{n=1}^\infty$ of $\GG[f]$. This lower estimate prevents $\ell_q$, $1<q\le \infty$, from being crudely block finitely representable in $\Et$. In turn, by \cite{Gowers1994hyp}*{Lemma~7}, for any $\varepsilon>0$, any $m\in\NN$, and any infinite normalized block basic sequence relative to the canonical basis of $\GG[f]$ there are $n\ge m$ a further normalized block basic sequence $(y_j)_{j=1}^n$ such that
\[
\norm{ \sum_{j=1}^n y_j } \le (1+\varepsilon) \frac{n}{f(n)}.
\]
Therefore, no block basic sequence of the canonical basis $\GG[f]$ is equivalent to the canonical basis of $\ell_1$.
Let us record some ready consequences of these facts.
\begin{proposition}\label{prop:KrivineClaim}
Let $f\in\Ft$.
\begin{enumerate}[label=(\roman*), leftmargin=*, widest=iii]
\item\label{it:sue} If $\KK$ is the restriction of $\GG[f]$ to an infinite subset of $\NN$, then $\sue(\KK)=1$.
\item\label{it:embeds} Neither the canonical basis of $c_0$ nor $\ell_q$, $1\le q<\infty$, are equivalent to a block basic sequence relative to the canonical basis of $\GG[f]$.
\item\label{it:l1block} $\ell_1$ is finitely block representable in any (infinite) block basic sequence relative to the canonical basis of $\GG[f]$ (cf.\@ \cite{Gowers1994hyp}*{Lemma 1}).
\end{enumerate}
\end{proposition}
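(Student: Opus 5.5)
The three items are meant to be read off the two facts recorded just before the statement. The first is the block lower $f$-estimate
\[
\sum_{j=1}^n\norm{y_j}\le f(n)\norm{\sum_{j=1}^n y_j}.
\]
The second is Gowers' Lemma~7, which yields normalized blocks $(y_j)_{j=1}^n$ with $\norm{\sum_j y_j}\le(1+\varepsilon)n/f(n)$. I would prove \ref{it:embeds} first, then \ref{it:l1block}, and then deduce \ref{it:sue}, because \ref{it:sue} is the only item that concerns arbitrary disjoint vectors rather than blocks.

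For \ref{it:embeds}, suppose a normalized block basic sequence $(y_j)$ is $C$-equivalent to the unit vector basis of $\ell_q$ with $1<q<\infty$, or of $c_0$. Then
\[
\norm{\sum_{j=1}^n y_j}\le Cn^{1/q}\quad(\text{resp. } \le C).
\]
The lower estimate then gives $n\le Cf(n)n^{1/q}$. This is impossible for large $n$, since $f(n)=o(n^{\delta})$ for every $\delta>0$. The case $q=1$ uses Lemma~7 instead. Any normalized block basic sequence $(u_j)$ of $(y_j)$ satisfies
\[
\norm{\sum_{j=1}^n u_j}\ge \frac{n}{C^2}
\]
through the equivalence with $\ell_1$, and this contradicts the bound $(1+\varepsilon)n/f(n)$ once $f(n)>2C^2$. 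Item \ref{it:l1block} is Gowers' Lemma~1. If one wants a self-contained argument, it is the standard Krivine/James-type dichotomy: if $\ell_1$ were not finitely block representable in $(y_j)$, one would obtain a nontrivial upper $r$-estimate on blocks for some $r>1$, and this contradicts the lower $f$-estimate just as in \ref{it:embeds}. Here I would simply cite \cite{Gowers1994hyp}*{Lemma 1}.

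For \ref{it:sue}, let $\KK=\GG[f][\At]$ with $\At$ infinite, and suppose $\KK$ satisfies an upper $r$-estimate for some $r>1$ with constant $C$. I would take normalized vectors in $\KK$ that are successive blocks of the unit vector system, which is possible because $\At$ is infinite. By \ref{it:l1block}, for each $n$ we can find disjoint successive normalized blocks $(y_j)_{j=1}^n$ with $\norm{\sum_j y_j}\ge n/2$. The upper $r$-estimate gives $\norm{\sum_j y_j}\le Cn^{1/r}$, so $n/2\le Cn^{1/r}$, which is false for large $n$. Therefore $\KK$ satisfies no upper $r$-estimate with $r>1$. Every Banach lattice satisfies an upper $1$-estimate by the triangle inequality, and so $\sue(\KK)=1$.

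The main obstacle is \ref{it:l1block}, since it needs either Gowers' lemma or a Krivine-type argument carried out inside block subspaces. The other two items are one-line consequences of it and of the $f$-estimates.
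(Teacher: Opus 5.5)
Your proof is correct. Item (ii) is argued exactly as the paper intends: the lower $f$-estimate disposes of $c_0$ and of $\ell_q$, $1<q<\infty$, and Gowers' Lemma~7 disposes of $\ell_1$. Items (iii) and (i), however, are obtained differently.

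For (iii), the paper applies Krivine's theorem, which makes some $\ell_r$, $1\le r\le\infty$, finitely block representable in any block basic sequence; it then uses the lower $f$-estimate to exclude every $r>1$. You cite Gowers' Lemma~1 instead, and your fallback sketch is a valid elementary proof of it. Let $\phi(n)=\sup\norm{\sum_{j=1}^n u_j}$, taken over normalized successive blocks $u_j$ of $(y_j)$. By the lattice structure, $\phi$ is nondecreasing and submultiplicative. So $\phi(m)<m$ for a single $m$ would give $\phi(n)\le C n^{\log\phi(m)/\log m}$, which contradicts $\phi(n)\ge n/f(n)$. On the other hand, $\norm{\sum_{j=1}^m u_j}\ge(1-\varepsilon)m$ yields, by disjointness, $\norm{\sum_j a_j u_j}\ge(1-\varepsilon m)\sum_j\abs{a_j}$, i.e.\ a copy of $\ell_1^m$ with constant close to $1$. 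This route uses only the $1$-unconditional lattice structure and $f(n)=o(n^\delta)$. Krivine's theorem is deeper but applies to arbitrary basic sequences.

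For (i), your route through (iii) is valid but unnecessary. The unit vectors $\ee_{a_1},\dots,\ee_{a_n}$, with $a_1<\dots<a_n$ in the infinite index set, are already successive semi-normalized blocks. So the lower estimate gives $\norm{\sum_j \ee_{a_j}}\ge c\,n/f(n)$, which is incompatible with the bound $C n^{1/r}$ coming from an upper $r$-estimate with $r>1$. This is presumably why the paper calls (i) clear.
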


\begin{proof}
\ref{it:sue} and \ref{it:embeds} are clear, and \ref{it:l1block} is a consequence of Krivine's Theorem (see \cite{AlbiacKalton2016}*{Theorem 12.3.9}).
\end{proof}

Given $1\le p<\infty$ and $f\in\Ft$, let $\GG[p,f]$ denote the $p$-convexification of $\GG[f]$. The following theorem gathers some properties of these spaces. Before stating and proving them, we note that those features of Banach lattices that only involve pairwise disjointly supported families plainly pass to their convexifications. In particular,
\begin{equation}\label{eq:GMSLE}
\frac{1}{ f^{1/p}(m)} \enpar{ \sum_{j=1}^m \norm{y_j}^p}^{1/p} \le \norm{\sum_{j=1}^m y_j} \le \enpar{ \sum_{j=1}^m \norm{y_j}^p}^{1/p}.
\end{equation}
for all block sequences $(y_j)_{j=1}^m$ relative to the unit vector system of $\GG[p,f]$.

\begin{theorem}\label{thm:GatherGp}
Let $f\in\Ft$ and $1\le p<\infty$.
\begin{enumerate}[label=(\roman*), leftmargin=*, widest=iii]
\item\label{it:suep} If $\KK$ is the restriction of $\GG[p,f]$ to an infinite subset of $\NN$, then $\sue(\KK)=p$.
\item\label{it:embedsp} Neither $\ell_q$, $1\le q<\infty$, nor $c_0$ embed into $\GG[p,f]$.
\item\label{it:ShrBC} The canonical basis of $\GG[p,f]$ is a boundedly complete and shrinking unconditional basis.
\item\label{it:GpReflexive} $\GG[p,f]$ is reflexive.
\item\label{it:GpSM} If $(\Sym,\norm{\cdot}_\Sym)$ is an unconditional spreading model of $\GG[p,f]$, then
\[
\frac{1}{f^{1/p}(m)} \enpar{\sum_{j=1}^m \abs{a_j}^p}^{1/p} \le \norm{s}_{\Sym} \le \enpar{\sum_{j=1}^m \abs{a_j}^p}^{1/p}.
\]
for all $s=\sum_{j=1}^m a_j\, \ee_j\in c_{00}$.
\item\label{it:lpCompBlock} $\ell_p$ is complementably finitely block representable in any (infinite) block basic sequence relative to the canonical basis of $\GG[p,f]$.
\end{enumerate}
\end{theorem}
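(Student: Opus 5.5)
The plan is to transfer each property of $\GG[f]$ recorded in Proposition~\ref{prop:KrivineClaim} to $\GG[p,f]$ by means of the disjoint estimate \eqref{eq:GMSLE}, and then to prove the six items in the order \ref{it:suep}, \ref{it:ShrBC}, \ref{it:GpSM}, \ref{it:embedsp}, \ref{it:GpReflexive}, \ref{it:lpCompBlock}.

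\textbf{Item \ref{it:suep}.} The right-hand inequality in \eqref{eq:GMSLE} is an upper $p$-estimate for block sequences. In an atomic lattice, any disjointly supported family can be rearranged into blocks, so this gives $\sue(\KK)\ge p$. For the reverse inequality, convexification multiplies $\sue$ by $p$, and restriction to a subset commutes with convexification. Hence $\sue(\KK)=p\,\sue(\GG[f][\Mt])=p$ by Proposition~\ref{prop:KrivineClaim}\ref{it:sue}.

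\textbf{Item \ref{it:ShrBC}.} The unit vectors clearly form an unconditional basis. Bounded completeness and shrinkingness are equivalent to the absence of copies of $c_0$ and $\ell_1$ spanned by block sequences (James' theorem).
\begin{itemize}
\item The lower bound $f(m)^{-1/p}\bigl(\sum\norm{y_j}^p\bigr)^{1/p}$ in \eqref{eq:GMSLE} grows like $m^{1/p}/f(m)^{1/p}\to\infty$ for normalized blocks. So no normalized block sequence has uniformly bounded partial sums, which excludes $c_0$.
\item A normalized block sequence $(y_j)$ equivalent to the $\ell_1$ basis is excluded by the upper $p$-estimate when $p>1$.
\item When $p=1$ it is excluded by Gowers' Lemma~7 quoted above, which gives normalized blocks with $\norm{\sum_{j=1}^n y_j}\le(1+\varepsilon)n/f(n)$.
\end{itemize}

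\textbf{Items \ref{it:GpSM}, \ref{it:embedsp} and \ref{it:GpReflexive}.}
\begin{itemize}
\item Item \ref{it:GpSM} follows from Lemma~\ref{lem:SMBBS}. The spreading model is generated by a normalized block basic sequence, and applying \eqref{eq:GMSLE} to the blocks $a_j y_{n_j}$ and passing to the limit gives both inequalities.
\item For item \ref{it:embedsp}, a subspace isomorphic to $\ell_q$ or $c_0$ would, by the standard gliding hump and small perturbation argument, yield a block basic sequence equivalent to the $\ell_q$ or $c_0$ basis. Its spreading model would then be $\ell_q$ or $c_0$.
  \begin{itemize}
  \item The case $q<p$ is ruled out by the upper estimate.
  \item The cases $q>p$ and $c_0$ are ruled out by the lower estimate, since $f(m)=o(m^\epsilon)$.
  \item The remaining case $q=p$ is the delicate one. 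Here I would pass through the concavification: a block sequence equivalent to the $\ell_p$ basis in $\GG[p,f]$ gives, via $\abs{y_j}^p$, a block sequence of $\GG[f]$ equivalent to the $\ell_1$ basis, contradicting Gowers' Lemma~7.
  \end{itemize}
\item Item \ref{it:GpReflexive} follows from item \ref{it:ShrBC} by James' theorem.
\end{itemize}

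\textbf{Item \ref{it:lpCompBlock}.} Given a block basic sequence, I pass to the restriction lattice $\KK$ on the union of the supports. The blocks of $\GG[f]$ obtained from the $p$-th powers finitely block represent $\ell_1$ by Proposition~\ref{prop:KrivineClaim}\ref{it:l1block}. Taking $p$-th roots, so convexifying, gives disjoint blocks $1$-representing $\ell_p^n$ in $\GG[p,f]$. For complementation I would use Lemma~\ref{lem:KrivineB} when $p$ gives a lower estimate, but it does not here. Instead, the expected route is Lemma~\ref{lem:KrivineA} applied to the upper $p$-estimate with constant $1$ from \eqref{eq:GMSLE}. This needs disjoint functionals in the dual equivalent to the $\ell_q^n$ basis and supported on the same blocks, obtained by dualizing the block representation, essentially as in Theorem~\ref{thm:Complp}.

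\textbf{Main obstacle.} I expect this last step to be the hardest: ensuring that the complementing projections are block-adapted and uniformly bounded, with the norming functionals living on the same blocks.
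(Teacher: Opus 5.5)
Your arguments for items \ref{it:suep}--\ref{it:GpSM} are sound. For \ref{it:embedsp} and \ref{it:ShrBC} you argue directly from \eqref{eq:GMSLE}, Gowers' Lemma~7 and the concavification $y\mapsto\abs{y}^p$, whereas the paper cites \cite{AADK2021}*{Proposition 2.3} and then gets \ref{it:ShrBC} and \ref{it:GpReflexive} from \ref{it:embedsp} by James' theorems. One small point in \ref{it:suep}: disjoint families cannot in general be rearranged into blocks, but you do not need this, since $\GG[p,f]$ is $p$-convex with constant $1$ and $\sue(\KK^{(p)})=p\,\sue(\KK)$ already gives the equality.

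\textbf{The gap: complementation in item \ref{it:lpCompBlock}.} You leave this as the ``main obstacle'', and the mechanism you suggest does not work. Lemma~\ref{lem:KrivineA} needs as \emph{input} a disjoint family in $\GG[p,f]^*$ that is already equivalent to the $\ell_q^n$ basis, and it then builds its own primal vectors from it. So it cannot complement $\ell_p^n$-blocks chosen beforehand. ``Dualizing'' those blocks means taking disjoint norming functionals $v_k^*$. The upper $p$-estimate then only gives $\norm{\sum_k b_k v_k^*}\ge(\sum_k\abs{b_k}^q)^{1/q}$. The reverse inequality is equivalent to boundedness of $x\mapsto(v_k^*(x))_{k=1}^n$ into $\ell_p^n$, which is exactly the complementation you want, so the argument is circular. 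Theorem~\ref{thm:Complp} does not help either, because Schep's theorem gives only \emph{disjoint}, not block, representability of $\ell_q$ in the dual.

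The missing ingredient in the paper is Krivine's theorem applied in the dual. By \eqref{eq:GMSLE} and \cite{LinTza1979}*{Proposition 1.f.5}, block sequences in the dual satisfy
\[
\enpar{\sum_{k=1}^m \norm{x_k^*}^q}^{1/q}\le\norm{\sum_{k=1}^m x_k^*}\le f^{1/p}(m)\enpar{\sum_{k=1}^m\norm{x_k^*}^q}^{1/q}.
\]
This rules out crude block representability of $\ell_r$ for $r\neq q$. Hence $\ell_q$ is block finitely representable in the dual basis, and only then is Lemma~\ref{lem:KrivineA} applied.

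\textbf{A direct way to close your primal route.} You can also finish with a single functional. Let $w_k=\abs{v_k}^p$, $1\le k\le n$, be normalized and $(1+\varepsilon)$-equivalent to the $\ell_1^n$ basis in $\GG[f]$. Take a positive norm-one $\phi\in\GG[f]^*$ with $\phi(\sum_k w_k)=\norm{\sum_k w_k}\ge n/(1+\varepsilon)$. Since $\phi(w_k)\le1$ for every $k$, this forces $\phi(w_k)\ge 1-n\varepsilon$. Define
\[
v_k^*(x)=\sum_{i\in\supp(v_k)}\phi(\ee_i)\abs{v_k(i)}^{p-1}\overline{\mathrm{sgn}(v_k(i))}\,x(i).
\]
Hölder's inequality with weights $\phi(\ee_i)$ on each support gives $\sum_k\abs{v_k^*(x)}^p\le\phi(\abs{x}^p)\le\norm{x}^p$. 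Also $v_k^*(v_j)=\delta_{k,j}\,\phi(w_k)$. Hence $x\mapsto(v_k^*(x)/\phi(w_k))_{k=1}^n$ extends $v_k\mapsto\ee_k$ and has norm at most $(1-n\varepsilon)^{-1}$; let $\varepsilon\to0$. This route has a further advantage: the complemented $\ell_p^n$'s are spanned by genuine blocks of the prescribed block sequence, whereas Lemma~\ref{lem:KrivineA} only controls supports.
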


\begin{proof}
\ref{it:suep} follows from Proposition~\ref{prop:KrivineClaim}\ref{it:sue}. To obtain \ref{it:embedsp}, just combine Proposition~\ref{prop:KrivineClaim}\ref{it:embeds} with \cite{AADK2021}*{Proposition 2.3}. By James theorems from \cite{James1950} (see \cite{AlbiacKalton2016}*{Theorems 3.3.1 and 3.3.2}), \ref{it:embedsp} implies \ref{it:ShrBC} and \ref{it:GpReflexive}. Combining Lemma~\ref{lem:SMBBS} with \eqref{eq:GMSLE} yields \ref{it:GpSM}.

Note that Theorem~\ref{thm:Complp} gives that $\ell_p$ is crudely disjointly finitely complementably representable in $\GG[p,f]$. To prove the stronger result \ref{it:lpCompBlock}, we consider the conjugate exponent $q$ of $p$. By \eqref{eq:GMSLE} and \cite{LinTza1979}*{Proposition 1.f.5}, the norm of the dual space of $\GG[p,f]$ satisfies the estimates
\[
\enpar{\sum_{k=1}^m \norm{x_k^*}^q}^{1/q} \le \norm{\sum_{k=1}^m x_k^*} \le f^{1/p}(m) \enpar{\sum_{k=1}^m \norm{x_k^*}^q}^{1/q}
\]
for every block basic sequence $(x_k^*)_{k=1}^m$ relative to the canonical basis. These estimates prevent $\ell_r$, $r\not=q$, from being crudely finitely block representable in the canonical basis. Consequently, combining Krivine's theorem with Lemma~\ref{lem:KrivineA} we are done.
\end{proof}

The value of $\ile\enpar{\GG[p,f]}$ is unclear to the authors. The subtle difference between block representability and disjoint representability should be clarified to address this question. We also point out that Theorem~\ref{thm:GatherGp} gives valuable information about the spreading-model structure of $\GG[p,f]$, but does not completely settle the issue.

Gowers and Maurey proved that the canonical unconditional basis of $\GG$ has the D+S property. Here, we will prove a more general result. First, let us record a technical result from \cite{GowersMaurey1997} that we will use, followed by a combinatorial lemma.
\begin{lemma}[see \cite{GowersMaurey1997}*{Proof of Lemma 27}]\label{lem:GMInside}
Let $f\in\Ft$. Let $(x_n)_{n=1}^\infty$ and $(y_n)_{n=1}^\infty$ be sequences of nonnull vectors in $c_{00}$, which we regard within $\GG[f]$. Set $A_n=\supp(x_n)$, $B_n=\supp(y_n)$ and $C_n=A_n\cup B_n$ for all $n\in\NN$. Suppose that
\begin{itemize}[leftmargin=*]
\item $A_n\cap B_n=\emptyset$ for all $n\in\NN$,
\item $\max(C_n)<\min(C_{n+1})$ fot all $n\in\NN$, and
\item There is a bounded linear map $V\colon [x_n \colon n\in\NN] \to \GG[f]$ such that $V(x_n)=y_n$ for all $n\in\NN$.
\end{itemize}
Then, $\lim_n \norm{y_n}/\norm{x_n}=0$.
\end{lemma}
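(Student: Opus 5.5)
The plan is to argue by contradiction, following the mechanism Gowers and Maurey use to show that operators on $\GG$ are close to diagonal.

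\emph{Setup.} Suppose the conclusion fails. Passing to a subsequence and rescaling, we may assume $\norm{x_n}=1$ and $\norm{y_n}\ge\delta>0$ for all $n$, with $\norm{V}<\infty$. Since the $C_n$ are successive and $A_n\cap B_n=\emptyset$, the vectors $x_n+y_n$ form a block sequence in $\GG[f]$. Any linear combination $u=\sum_n a_n x_n$ is sent by $V$ to $\sum_n a_n y_n$, and the supports of $u$ and $V(u)$ are disjoint. We will construct, for each large $k$, a combination $u_k$ of the $x_n$'s such that $\norm{V(u_k)}/\norm{u_k}\gtrsim \sqrt{f(m_k)}$ for some $m_k\to\infty$. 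This contradicts the boundedness of $V$.

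\emph{Step 1: building the $y$-part.} Using \cite{Gowers1994hyp}*{Lemma 7}, applied to the block sequence $(y_n)$, we build rapidly increasing $\ell_1^{N}$-averages $w_1<w_2<\dots<w_{m}$. Each is a normalized average of consecutive $y_n$'s, with $N_j$ growing fast enough relative to the previous supports. For each $w_j$ we pick a norming functional $w_j^*$, supported in $\bigcup_n B_n$, of the form $f(N_j)^{-1}\sum_i E_i^*$. The sizes $N_j$ are chosen according to the coding $\sigma$ of the special sequences in the definition of $\GG[f]$, so that $(w_j^*)_{j=1}^m$ is a special sequence. The associated special functional then gives
\[
\norm{\textstyle\sum_j w_j} \gtrsim \frac{m}{\sqrt{f(m)}}.
\]
Let $v_j$ be the same combination as $w_j$ with each $y_n$ replaced by $x_n$, and set $u=\sum_j v_j$, so that $V(u)=\sum_j w_j$. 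The lower bound $\norm{y_n}\ge\delta$ is what makes the averages $w_j$ available with norms bounded below.

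\emph{Step 2: the upper estimate for $\norm{u}$.} We must show $\norm{\sum_j v_j}\lesssim m/f(m)$, up to a constant. This is the main obstacle, and it is the heart of \cite{GowersMaurey1997}*{Lemma 27}. Each $v_j$ is itself a rapidly increasing $\ell_1$-average of normalized $x_n$'s, with $\norm{v_j}\lesssim 1$. The task is to control every norming functional on $u$: sup-norm terms, lower-estimate functionals $f(\ell)^{-1}\sum E_i^*$, and special functionals.

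For the first two kinds we will use the standard rapidly-increasing-sequence estimates from Gowers' paper. For a special functional $\sum_i g_i^*$, we use the injectivity of the coding $\sigma$. The special sequence $(g_i^*)$ can agree with the $y$-built sequence $(w_j^*)$ only up to the first index where they differ. All $w_j^*$ are supported off $\bigcup_n A_n$, so on that common part the contribution to $u$ is zero. After the first difference, the sizes of the $g_i^*$ are mismatched with the $N_j$'s, and the lacunarity of the coding forces each such term to act on at most one $v_j$ nontrivially, contributing $O(1)$ in total. Hence $\abs{\sum_i g_i^*(u)}\lesssim 1+m/f(m)$.

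\emph{Conclusion.} Combining the two steps gives
\[
\frac{\norm{V(u)}}{\norm{u}}\gtrsim \frac{m/\sqrt{f(m)}}{m/f(m)}=\sqrt{f(m)}\longrightarrow\infty,
\]
which is impossible since $V$ is bounded. Hence $\norm{y_n}/\norm{x_n}\to 0$.
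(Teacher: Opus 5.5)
Your overall mechanism is the Gowers--Maurey one the paper cites: a special sequence on the $y$-side, disjoint supports plus injectivity of the coding to neutralize special functionals on the $x$-side, and an RIS upper estimate on the $x$-side, giving a ratio of order $\sqrt{f(m)}$. But the building blocks of Step~1 cannot carry Steps~1 and~2, and this is a genuine gap.

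\emph{The lower estimate.} An $\ell_1^{N}$-average $w=N^{-1}\sum_{i=1}^N u_i$, with $\norm{w}=1$ and $\norm{u_i}\le 1+\varepsilon$, satisfies $\sum_{i=1}^N\norm{E_i w}\le 3(1+\varepsilon)$ for all successive intervals $E_1<\dots<E_N$. Hence every functional $f(N)^{-1}\sum_{i=1}^N E_i^*$ takes value at most $3(1+\varepsilon)/f(N)$ on it. So your $w_j^*$ cannot norm the $\ell_1^{N_j}$-averages $w_j$, and the bound $\norm{\sum_j w_j}\gtrsim m/\sqrt{f(m)}$ fails as written. The members of a special sequence must be normalized sums of length $N_j$, not $\ell_1^{N_j}$-averages.

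\emph{The upper estimate.} More seriously, copying the coefficients of $w_j$ onto the $x_n$'s gives no control of $v_j$. Boundedness of $V$ bounds $\norm{\sum a_n y_n}$ by $\norm{\sum a_n x_n}$, never the reverse, so all you know is $\norm{v_j}\ge\norm{w_j}/\norm{V}$. Nothing gives $\norm{v_j}\lesssim 1$, let alone the RIS estimates that Step~2 uses. The sentence ``each $v_j$ is itself a rapidly increasing $\ell_1$-average of normalized $x_n$'s'' is precisely the unproved point.

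\emph{What the argument needs.} The division of labour has to be reversed. Take $w_j=\frac{f(N_j)}{N_j}\sum_{n\in F_j} y_n$ with $\abs{F_j}=N_j$. The defining lower estimate of $\GG[f]$, via the functional $w_j^*=f(N_j)^{-1}\sum_{n\in F_j} y_n^*$ with $y_n^*$ norming $y_n$ and supported in $B_n$, gives $w_j^*(w_j)\ge\delta$. This is where $\norm{y_n}\ge\delta$ is genuinely used, and it also keeps $w_j^*$ off $\cup_n A_n$.

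The counterparts $v_j=\frac{f(N_j)}{N_j}\sum_{n\in F_j} x_n$ must then be bounded and satisfy the RIS estimates. That requires $(x_n)_{n\in\cup_j F_j}$ to be a rapidly increasing sequence. Only then does your coding argument in Step~2 work: a functional $f(M)^{-1}\sum_i E_i^*$ with $M\notin\{N_j\}$ is small on each $v_j$, even though it may meet several of them, not because it meets only one.

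For an arbitrary semi-normalized block sequence $(x_n)$ this structure is not automatic. You must either extract a subsequence of $(x_n)$ that behaves like an RIS, or produce RIS vectors in $[x_n\colon n\in\NN]$ whose images under $V$ stay bounded below. Your proposal supplies neither, and that is the missing idea.
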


\begin{lemma}\label{lem:average}
Let $\XX$ be a Banach space with a Markushevich basis $\XB=(\xx_n)_{n\in\Nt}$. Let $T\in \Bt(\XX)$ be such that $\diag(T;\XB)=0$. Let $x\in\XX$ and $J\subseteq\Nt$ finite be such that
\[
\bigcup_{n\in\supp(f)} \{n\} \cup \supp(T(\xx_n))\subseteq J.
\]
Then
\[
T(x)= 2 \Ave_{A\subseteq J} S_A[\XB] \circ T \circ S_{J\setminus A}[\XB](x).
\]
\end{lemma}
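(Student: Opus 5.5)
The plan is to reduce everything to a finite matrix computation on the coefficients of $x$, then count, for each off-diagonal entry, how often a random subset $A\subseteq J$ captures it. Since $\XB$ is a Markushevich basis, the coefficient transform is one-to-one, so it suffices to compare the coefficients $\xx_k^*(\cdot)$ of both sides for every $k\in\Nt$.

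First, set $a_n=\xx_n^*(x)$ and $t_{k,n}=\xx_k^*(T(\xx_n))$, so that $\mat(T;\XB)=(t_{n,k})$ up to the order of the indices. The hypothesis $\diag(T;\XB)=0$ says $t_{n,n}=0$ for all $n$. The support hypothesis says $a_n=0$ for $n\notin J$, so $x=\sum_{n\in J}a_n\xx_n$ is a finite sum. It also says $t_{k,n}=0$ whenever $n\in\supp(x)$ and $k\notin J$. Hence
\[
\xx_k^*(T(x))=\sum_{n\in J} t_{k,n}a_n \quad (k\in J), \qquad \xx_k^*(T(x))=0 \quad (k\notin J).
\]
Second, for each $A\subseteq J$ the vector $S_{J\setminus A}[\XB](x)=\sum_{n\in J\setminus A}a_n\xx_n$ is finitely supported, and applying $S_A[\XB]$ keeps only the coordinates in $A$. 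This gives
\[
\xx_k^*\bigl(S_A[\XB]\circ T\circ S_{J\setminus A}[\XB](x)\bigr)=\chi_A(k)\sum_{n\in J}\chi_{J\setminus A}(n)\,t_{k,n}a_n .
\]
All sums here are finite, so averaging over $A\subseteq J$ commutes with taking coefficients.

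The third step is the counting, and this is where the constant is decided. A pair $(k,n)$ with $k=n$ never contributes, because $k\in A$ and $n\notin A$ are incompatible; this is harmless since $t_{n,n}=0$. For $k\neq n$ in $J$, the proportion of subsets $A$ with $k\in A$ and $n\notin A$ is $1/4$. So the literal average $\Ave_{A\subseteq J}S_A T S_{J\setminus A}(x)$ equals $\frac14T(x)$. The factor $2$ in the statement is therefore only correct if the average is taken over the \emph{unordered} splittings $\{A,J\setminus A\}$ of $J$, with both cross terms kept. Under that reading, the summand is $S_A T S_{J\setminus A}+S_{J\setminus A}T S_A$ for each unordered splitting. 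A pair $k\neq n$ is separated by the splitting with proportion $1/2$, and when it is separated exactly one of the two cross terms records $t_{k,n}a_n$. The average is then $\frac12T(x)$, and multiplying by $2$ recovers $T(x)$ coordinatewise, which by injectivity of the coefficient transform gives the identity as stated.

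The only real obstacle is this bookkeeping of the constant. I would state the symmetrized reading explicitly before the computation, noting that $A\mapsto J\setminus A$ makes the two cross terms equidistributed, and remark that with an average of the one-sided term $S_A T S_{J\setminus A}$ over all $A\subseteq J$ the correct factor is $4$. For the later application, presumably writing $T$ with null diagonal as an average of operators $S_A T S_{J\setminus A}$ with disjoint supports so as to apply Lemma~\ref{lem:GMInside}, either constant serves equally well, since only a uniform bound on it is used.
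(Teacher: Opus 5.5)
Your method is the paper's: reduce to coefficients (equivalently to $x=\xx_n$) and count the subsets $A\subseteq J$ with $k\in A$ and $n\notin A$. Your count is the correct one. The paper's proof asserts
\[
\sum_{A\subseteq J\setminus\{n\}}\sum_{k\in A} a_{n,k}\,\xx_k=2^{\abs{J}-1}\sum_{k\in J\setminus\{n\}} a_{n,k}\,\xx_k ,
\]
but a fixed $k\in J\setminus\{n\}$ lies in only $2^{\abs{J}-2}$ subsets of $J\setminus\{n\}$. So the literal average is $\frac14 T(x)$, and the lemma as stated is false. A concrete counterexample: take $T(y)=\xx_1^*(y)\,\xx_2$, $x=\xx_1$, $J=\{1,2\}$. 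Only $A=\{2\}$ contributes, so the right-hand side is $\frac12\xx_2$, not $\xx_2$. You also silently read the typo $\supp(f)$ as $\supp(x)$, which is the intended hypothesis.

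What I would change is your plan to rescue the factor $2$ by reading $\Ave_{A\subseteq J}$ as an average over unordered splittings of $S_ATS_{J\setminus A}+S_{J\setminus A}TS_A$. The notation does not say that, so you would be proving a different statement while appearing to prove this one. State and prove the corrected identity instead:
\[
T(x)=4\Ave_{A\subseteq J} S_A[\XB]\circ T\circ S_{J\setminus A}[\XB](x).
\]
Your coefficient argument proves this verbatim. As you anticipated, in the proof of Theorem~\ref{thm:GMD+SS} the correction only weakens the conclusion $\norm{y_j}\ge c$ to $\norm{y_j}\ge c/2$, which is harmless.
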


\begin{proof}
It suffices to prove the result in the case when $x=\xx_n$, where $\{n\} \cup \supp(T(\xx_n))\subseteq J$. Set $\mat(T)=(a_{n,k})_{(n,k)\in\Nt^2}$.
We have
\[
\sum_{A\subseteq J} S_A[\XB] \circ T \circ S_{J\setminus A}(\xx_n)
=\sum_{ A\subseteq J\setminus\{n\} } \sum_{k\in A} a_{n,k} \, \xx_k
=2^{\abs{J}-1} \sum_{k \in J\setminus\{n\}} a_{n,k} \, \xx_k.
\]
Since $\supp(\xx_n) \subseteq J\setminus\{n\}$, we are done.
\end{proof}

\begin{theorem}\label{thm:GMD+SS}
Let $f\in\Ft$ and $1\le p<\infty$. Then the canonical unconditional basis of $\GG[p,f]$ has the D+S property.
\end{theorem}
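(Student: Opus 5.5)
Fix $T\in\Bt(\GG[p,f])$ and set $R=T-\diag(T;\Et)$, so that $\dig(R;\Et)=0$. We must show that $R$ is strictly singular. By Lemma~\ref{lem:SS:A} it is enough to show that $R$ is strictly singular relative to $\Et$. The canonical basis is shrinking by Theorem~\ref{thm:GatherGp}\ref{it:ShrBC}, so Lemma~\ref{lem:SS:B} reduces this to roughly diagonal operators $S$ with $\abs{\mat(S)}\le\abs{\mat(R)}$. Such an $S$ still has zero diagonal.

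Note that Lemma~\ref{lem:SS:B} only provides the entrywise domination, not boundedness of an arbitrary truncation. I will therefore apply it in the form actually produced in its proof. There $S=T_1$ is obtained from $R$ by zeroing the entries with $n>n_k$ or $k>k_n$, so $S$ is a bounded roughly diagonal operator with zero diagonal. By Lemma~\ref{lem:SS:C}, it then suffices to prove the following: if $(z_j)$ is a semi-normalized finitely supported sequence whose sets $C_j=\bigcup_{k\in\supp(z_j)}\{k\}\cup\supp(S(\ee_k))$ are successive, then $S(z_j)\to0$.

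Fix such $(z_j)$ and apply Lemma~\ref{lem:average} with $J=C_j$:
\[
S(z_j)=2\Ave_{A\subseteq C_j} S_A\circ S\circ S_{C_j\setminus A}(z_j).
\]
Suppose $\norm{S(z_j)}\ge c>0$ along a subsequence. Then for each $j$ some $A_j\subseteq C_j$ satisfies $\norm{S_{A_j} S S_{C_j\setminus A_j}(z_j)}\ge c/2$. Put $x_j=S_{C_j\setminus A_j}(z_j)$ and $y_j=S_{A_j}S(x_j)$. These vectors satisfy:
\begin{itemize}
\item $\supp x_j\cap\supp y_j=\emptyset$;
\item the union of their supports lies in $C_j$, so consecutive pairs are successive;
\item $\norm{x_j}\le C\norm{z_j}$ is bounded, by unconditionality;
\item $\norm{y_j}\ge c/2$.
\end{itemize}
Moreover, since the $C_j$ are successive and $S$ maps $[\ee_k:k\in C_j]$ into $[\ee_k:k\in C_j]$, the map $V=\bigl(\sum_j S_{A_j}\bigr)\circ S$ restricted to $[x_j]$ is bounded and sends $x_j\mapsto y_j$.

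Lemma~\ref{lem:GMInside} is stated for $\GG[f]$, not for $\GG[p,f]$, so the next step is to transfer the setting. The map $g\mapsto \abs{g}^p\operatorname{sign}(g)$ sends $\GG[p,f]$ to $\GG[f]$, but it is not linear. Instead I would use the linear structure directly. Consider $u_j=\abs{x_j}^p$ and $v_j=\abs{y_j}^p$ in $\GG[f]$; they have the same disjoint successive supports as $x_j$ and $y_j$. The claim needed is that $\sum a_ju_j\mapsto\sum a_jv_j$ is bounded on $\GG[f]$. For block sequences, $\norm{\sum a_j u_j}_{\GG[f]}=\norm{\sum \abs{a_j}^{1/p}x_j}_{\GG[p,f]}^p$ by disjointness, and similarly for $v_j$. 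Boundedness of $V$ on disjoint blocks, together with unconditionality to absorb signs, then gives the bound with constant $\norm{V}^p$ (up to the unconditional constant).

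Lemma~\ref{lem:GMInside} now yields $\norm{v_j}/\norm{u_j}\to0$, i.e.\ $(\norm{y_j}/\norm{x_j})^p\to0$. This contradicts $\norm{y_j}\ge c/2$ with $\norm{x_j}$ bounded.

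The main obstacle is this transfer step: checking that boundedness of a block map on $\GG[p,f]$ passes to the corresponding $p$-th power block map on $\GG[f]$, which is where disjointness of supports is essential. The other delicate point is ensuring that the operator fed into Lemma~\ref{lem:SS:C} is genuinely bounded, which is why I use the truncation built in the proof of Lemma~\ref{lem:SS:B}.
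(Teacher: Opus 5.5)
Your proposal is correct and follows essentially the paper's proof: you reduce via Lemmas~\ref{lem:SS:A}, \ref{lem:SS:B} (using that the basis is shrinking) and \ref{lem:SS:C} to successive blocks, use Lemma~\ref{lem:average} to extract disjointly supported pairs $x_j\mapsto y_j$ under a bounded operator, pass to $p$-th powers in $\GG[f]$, and contradict Lemma~\ref{lem:GMInside}. Two side remarks. Your explicit check that $\sum_j a_j\abs{x_j}^p\mapsto\sum_j a_j\abs{y_j}^p$ is bounded with constant $\norm{V}^p$ supplies a step the paper only asserts, and it uses the correct exponent $p$ where the paper writes $1/p$. Your detour through the specific truncation built in the proof of Lemma~\ref{lem:SS:B} is harmless but unnecessary, since that lemma's hypothesis only quantifies over bounded $S$, and every such $S$ has zero diagonal.
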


\begin{proof}
Assume by contradiction that the canonical basis $\Et=(\ee_n)_{n=1}^\infty$ of $\XX=\GG[p,f]$ fails to have the D+S property. Then there is a non-strictly singular operator $T_1\in\Bt(\XX)$ with $\diag(T_1;\Et)=0$. By Lemmas~\ref{lem:SS:A}, ~\ref{lem:SS:B}, and~\ref{lem:SS:C}, there are $c>0$, $T_2\in\Bt(\XX)$ with $\diag(T_2;\Et)=0$, an increasing integer sequence $(n_j)_{j=0}^\infty$ with $n_0=0$, and a semi-normalized sequence $(x_j)_{j=1}^\infty$ in $\XX$ such that $\norm{T_2(x_j)}\ge 2c$ and
\[
\bigcup_{n\in \supp(x_j)} \{n\} \cup \supp(T_2(\ee_n)) \subseteq J_j:=(n_{j-1},n_j]\cap\ZZ
\]
for all $j\in\NN$. By Lemma~\ref{lem:average}, there is a partition $(A_j,B_j)$ of $J_j$ such that the vector
\[
y_j=S_{A_j}[\Et] \circ T \circ S_{B_j}[\Et](x_j)
\]
satisfies $\norm{y_j}\ge c$. Set $A=\cup_{j=1}^\infty A_j$, $B=\cup_{j=1}^\infty B_j$ and $T_3=S_A\circ T_2 \circ S_B$. Put
\[
z_j=S_{B_j}[\Et] (x_j),
\]
$C_j=\supp(z_j)$ and $D_j=\supp(y_j)$ for all $j\in\NN$. We have $T_3(z_j)=y_j$, $C_j\cap D_j=\emptyset$, and $C_j\cup D_j\subseteq J_j$ for all $j\in\NN$.

For each $j\in\NN$, we define vectors in $\YY:=\GG[f]$ by $u_j=\abs{z_j}^{1/p}$ and $v_j=\abs{y_j}^{1/p}$. The sequences $\UB:=(u_j)_{j=1}^\infty$ and $\VB:=(v_j)_{j=1}^\infty$ are semi-normalized, and $\supp(u_k)=C_k$ and $\supp(v_k)=D_j$ for all $j\in\NN$. Since the vectors of $\UB$ are pairwise disjointly supported, there is a bounded linear map $T_4\colon [\UB] \to \YY$ such that $T(u_j)=v_j$ for all $j\in\NN$. By Lemma~\ref{lem:GMInside}, $\lim_j \norm{v_j}/\norm{u_j}=0$. This absurdity puts an end to the proof.
\end{proof}

We are now in a position to provide positive answers to Questions~\ref{qt:AA}, \ref{qt:UTAPCloselp}, \ref{qt:lpFC}, and a negative answer to Question~\ref{qt:BCLT}.

\begin{theorem}\label{thm:CKBanach}
Suppose $\XX$ is an infinite-dimensional complemented subspace of $\GG[p,f]$, where $f\in\Ft$ and $1\le p<\infty$. Then:
\begin{enumerate}[label=(\roman*), leftmargin=*, widest=iii]
\item $\XX$ has a unique unconditional basis.
\item $\XX$ fails to be power stable.
\item $\XX$ fails to be hyperplane-sum stable.
\item $\sue(\XX)=p$.
\item Neither $c_0$ nor $\ell_q$, $1\le q<\infty$, $q\not=p$, are spreading models for $\XX$. In particular, if $p\notin\{1,2\}$ then $\GG[p,f]$ has a spreading other than $c_0$, $\ell_1$, or $\ell_2$.
\end{enumerate}
\end{theorem}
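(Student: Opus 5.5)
The plan is to reduce everything to the D+S machinery of Section~\ref{sect:D+S}, using Theorem~\ref{thm:GMD+SS}: the canonical basis $\Et$ of $\GG[p,f]$ has the D+S property. By Theorem~\ref{thm:AAC}, $\XX$ is congruent to, and in particular isomorphic to, $[\Et|\At]$ for some infinite $\At\subseteq\NN$. This subbasis is again a semi-normalized D+S unconditional basis, since by Lemma~\ref{lem:nulldiag} the property passes to subbases. We may therefore assume that $\XX=[\Et|\At]$, which is the restriction $\KK$ of $\GG[p,f]$ to $\At$ and carries a D+S unconditional basis.

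Item (i) then follows directly from Theorem~\ref{thm:Gathered}. For items (ii) and (iii), apply Theorem~\ref{thm:GMHS} to $\XX$ with its D+S basis: $\XX$ is not isomorphic to any of its proper subspaces. If $\XX^m\simeq\XX$ for some $m\ge 2$, or $\XX\oplus\FF^h\simeq\XX$ for some $h\ge1$, then $\XX$ would be isomorphic to the proper subspace $\XX^m$ or $\XX\oplus\FF^h$ of itself, after realizing these inside $\XX$ via complemented subbases. Concretely, $\XX\oplus\FF^h\simeq\XX$ means $\XX$ is isomorphic to a proper finite-codimensional subspace of itself, since $\XX\simeq\XX\oplus\FF^h$ contains $\XX\oplus\{0\}$ as a proper subspace. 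Similarly $\XX^m$ contains $\XX\oplus\{0\}$ properly. So both stabilities fail. Alternatively, Corollary~\ref{cor:HSNotEmbed} gives this at once, since $\XX$ complementably embeds in itself.

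Item (iv) is Theorem~\ref{thm:GatherGp}\ref{it:suep}. Here I will note that $\sue$ is a lattice invariant of $\KK$, and interpret $\sue(\XX)$ through the lattice structure induced by the unique unconditional basis. Uniqueness from (i) makes this well-defined up to lattice isomorphism, since any two unconditional bases are permutatively equivalent.

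For item (v), an unconditional spreading model $\Sym$ of $\XX$ is one of $\GG[p,f]$, so Theorem~\ref{thm:GatherGp}\ref{it:GpSM} gives
\[
f^{-1/p}(m)\Bigl(\sum_{j=1}^m\abs{a_j}^p\Bigr)^{1/p}\le\norm{s}_\Sym\le\Bigl(\sum_{j=1}^m\abs{a_j}^p\Bigr)^{1/p}.
\]
Testing on $s=\sum_{j=1}^m\ee_j$ gives $m^{1/p}f^{-1/p}(m)\le\norm{s}_\Sym\le m^{1/p}$. Since $f(m)=o(m^\epsilon)$ for every $\epsilon>0$, the growth of $\norm{\sum_{j\le m}\ee_j}_\Sym$ is incompatible with $m^{1/q}$ for $q\ne p$ (including $q=\infty$, i.e.\ $c_0$, where the norm would be bounded). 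So $\Sym$ is not $\ell_q$ or $c_0$ for $q\neq p$.

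For the final assertion I still need one spreading model: any normalized block basic sequence of $\Et$ is weakly null by reflexivity (Theorem~\ref{thm:GatherGp}\ref{it:GpReflexive}) and has a good subsequence, giving some spreading model. If $p\notin\{1,2\}$, this model is neither $c_0$, $\ell_1$ nor $\ell_2$ by the above. The main subtlety is the case $\Sym=\ell_p$ itself, which is harmless here since $p\notin\{1,2\}$. The only delicate point overall is making precise the meaning of $\sue(\XX)$ for a Banach space, handled via (i) as described.
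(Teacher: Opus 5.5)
Your proposal is correct and follows the paper's proof, which simply combines Theorem~\ref{thm:GMD+SS}, Theorem~\ref{thm:Gathered}, Theorem~\ref{thm:GMHS}, and Theorem~\ref{thm:GatherGp}. You make explicit the steps the paper leaves implicit: the reduction via Theorem~\ref{thm:AAC} and Lemma~\ref{lem:nulldiag} to the D+S subbasis of $[\Et|\At]$, the interpretation of $\sue(\XX)$ through the unique unconditional basis, and the growth test on $\sum_{j\le m}\ee_j$ that rules out $c_0$ and $\ell_q$, $q\neq p$, as spreading models.
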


\begin{proof}[Proof of Theorem~\ref{thm:CKBanach}]
Just combine Theorem~\ref{thm:GMD+SS}, Theorem~\ref{thm:Gathered}, Theorem~\ref{thm:GMHS}, and Theorem~\ref{thm:GatherGp}.
\end{proof}
\section{Open Questions}\label{sect:open}\noindent
A well-known general problem in the study of uniqueness of lattice structure is the following.
\begin{question}[see \cite{AAW2022}*{Question 1.1}]\label{qt:UTAPX+Y}
Let $\XX$ and $\YY$ Banach spaces with a unique unconditional basis. Does $\XX\oplus \YY$ have a unique unconditional basis?
\end{question}

Since the theory of uniqueness of unconditional basis has been mainly developed for Banach spaces isomorphic to their square, addressing Question~\ref{qt:UTAPX+Y} in the case when $\XX=\YY$ has never been on the table. In light of Theorem~\ref{thm:CKBanach}, this case is worth looking at.

\begin{question}
Let $m\in\NN$, $m\ge 2$. Does $\GG^m$ have a unique unconditional basis?
\end{question}

We note that $\GG^m$ can be naturally indentified with $\GG(\VV)$, where $\VV$ is am $m$-dimensional Banach space, and a D+S theory can be developed in this setting. Given $x=(x_n)_{n=1}^\infty$ in $\GG(\VV)$ or in $\GG^*(\VV^*)$ we set $x_n=x(n)$ for all $n\in\NN$. We define an endomorphism $T$ of $\GG(\VV)$ to be diagonal if there is a sequence $(T_n)_{n=1}^\infty$ in $\Bt(\VV)$ such that $T(x)=(T_n(x_n))_{n=1}^\infty$ for all $x=(x_n)_{n=1}^\infty\in\GG(\VV)$. In this terminology, we can use that the vector lattice $\GG(\VV)$ has the D+S property to prove that any complemented unconditional basis $(\xx_k)_{k\in\Kt}$ of $\GG[\VV]$ with projecting functionals $(\xx_k^*)_{k\in\Kt}$ in $\GG^*(\VV^*)$ satisfies
\[
\liminf_k \sup_n \norm{\xx_k^*(n)} \norm{\xx_k(n)} >0.
\]
However, in order to take advantage of Lemma~\ref{lem:lattification}, we should close the gap between this estimate and
\[
\liminf_k \sup_n \abs{\xx_k^*(n) (\xx_k(n)) }>0.
\]
The point is that, at least a priori, $\xx_k^*(n)$ and $\xx_k(n)$ could peak at different coordinates with respect to a basis of $\VV$.

The convexification procedure defines an action of the multiplicative semi-group $[1,\infty)$ on the class of Banach lattices. If we consider $p$-convexified spaces for $0<p<1$, we obtain a group action. The price we pay to obtain this richer structure is widening our scope and going into the study of quasi-Banach lattices.

\begin{question}\label{qt:Gp<1}
Does $\GG^{(p)}$, $0<p<1$, have a unique unconditional basis?
\end{question}

Although by now there are solid foundations on the theory of quasi-Banach lattices thanks mostly to the work of Kalton (see \cites{Kalton1984b,AlbiacAnsorena2025}), some background results may need to be clarified before addressing Question~\ref{qt:Gp<1}. In fact, a perturbation theory for operators between quasi-Banach spaces seems to be missing. In any case, trying to extend to the case $0<p<1$ the proof of Theorem~\ref{thm:CKBanach} verbatim is hopeless. We point out that since the canonical basis of $\GG[p,f]$, $f\in\Ft$, fails to be shrinking in the case when $p<1$, Lemma~\ref{lem:SS:B} does not apply to this basis. We also remark that, in the lack on local convexity, Lemma~\ref{lem:average} does not imply a lower bound for the quasi-norm of some of the vectors we are averaging. Thus, we do not know whether the canonical basis of $\GG[p,f]$, $0<p<1$, has the D+S property.


\begin{bibdiv}
\begin{biblist}
\bib{AlbiacAnsorena2022b}{article}{
author={Albiac, Fernando},
author={Ansorena, Jos\'{e}~L.},
title={On the permutative equivalence of squares of unconditional
bases},
date={2022},
ISSN={0001-8708},
journal={Adv. Math.},
volume={410},
pages={Paper No. 108695, 26},
url={https://doi-org/10.1016/j.aim.2022.108695},
review={\MR{4487973}},
}

\bib{AlbiacAnsorena2022PAMS}{article}{
author={Albiac, Fernando},
author={Ansorena, Jos\'{e}~L.},
title={Uniqueness of unconditional basis of
{$\ell\sp2\oplus\mathcal{T}^{(2)}$}},
date={2022},
ISSN={0002-9939,1088-6826},
journal={Proc. Amer. Math. Soc.},
volume={150},
number={2},
pages={709\ndash 717},
url={https://doi.org/10.1090/proc/15670},
review={\MR{4356181}},
}

\bib{AlbiacAnsorena2022c}{article}{
author={Albiac, Fernando},
author={Ansorena, Jos\'{e}~L.},
title={Uniqueness of unconditional basis of infinite direct sums of
quasi-{B}anach spaces},
date={2022},
ISSN={1385-1292},
journal={Positivity},
volume={26},
number={2},
pages={Paper No. 35, 43},
url={https://doi-org/10.1007/s11117-022-00905-1},
review={\MR{4400173}},
}

\bib{AlbiacAnsorena2024c}{article}{
author={Albiac, Fernando},
author={Ansorena, Jos\'{e}~L.},
title={The structure of greedy-type bases in {T}sirelson's space and its
convexifications},
date={2024},
ISSN={0391-173X,2036-2145},
journal={Ann. Sc. Norm. Super. Pisa Cl. Sci. (5)},
volume={25},
number={3},
pages={1357\ndash 1382},
review={\MR{4855766}},
}

\bib{AlbiacAnsorena2025}{article}{
author={Albiac, Fernando},
author={Ansorena, Jos\'{e}~L.},
title={Isomorphisms between vector-valued {$H_p$}-spaces for $0<p\le 1$ and uniqueness of unconditional structure},
date={2025},
journal={arXiv e-prints},
}

\bib{AADK2021}{article}{
author={Albiac, Fernando},
author={Ansorena, Jos\'{e}~L.},
author={Dilworth, Stephen~J.},
author={Kutzarova, Denka},
title={A dichotomy for subsymmetric basic sequences with applications to {G}arling spaces},
date={2021},
ISSN={0002-9947},
journal={Trans. Amer. Math. Soc.},
volume={374},
number={3},
pages={2079\ndash 2106},
url={https://doi.org/10.1090/tran/8278},
review={\MR{4216733}},
}

\bib{AAW2022}{article}{
author={Albiac, Fernando},
author={Ansorena, Jos\'{e}~L.},
author={Wojtaszczyk, Przemys{\l}aw},
title={Uniqueness of unconditional basis of {$H_p(\mathbb{T})\oplus\ell_2$} and {$H_p(\mathbb{T}) \oplus\mathcal{T}^{(2)}$} for {$0 < p < 1$}},
date={2022},
ISSN={0022-1236},
journal={J. Funct. Anal.},
volume={283},
number={7},
pages={Paper No. 109597, 24},
url={https://doi-org/10.1016/j.jfa.2022.109597},
review={\MR{4447769}},
}

\bib{AlbiacKalton2016}{book}{
author={Albiac, Fernando},
author={Kalton, Nigel~J.},
title={Topics in {B}anach space theory},
edition={Second Edition},
series={Graduate Texts in Mathematics},
publisher={Springer, [Cham]},
date={2016},
volume={233},
ISBN={978-3-319-31555-3; 978-3-319-31557-7},
url={https://doi.org/10.1007/978-3-319-31557-7},
note={With a foreword by Gilles Godefroy},
review={\MR{3526021}},
}

\bib{BL1983}{book}{
author={Beauzamy, B.},
author={Laprest\'{e}, J.-T.},
title={Mod\`{e}les \'{e}tal\'{e}s des espaces de {B}anach},
series={Travaux en Cours. [Works in Progress]},
publisher={Hermann, Paris},
date={1984},
ISBN={2-7056-5965-X},
review={\MR{770062}},
}

\bib{BCLT1985}{article}{
author={Bourgain, Jean},
author={Casazza, Peter~G.},
author={Lindenstrauss, Joram},
author={Tzafriri, Lior},
title={Banach spaces with a unique unconditional basis, up to permutation},
date={1985},
ISSN={0065-9266},
journal={Mem. Amer. Math. Soc.},
volume={54},
number={322},
pages={iv+111},
url={https://doi-org/10.1090/memo/0322},
review={\MR{782647}},
}

\bib{CasKal1998}{article}{
author={Casazza, Peter~G.},
author={Kalton, Nigel~J.},
title={Uniqueness of unconditional bases in {B}anach spaces},
date={1998},
ISSN={0021-2172},
journal={Israel J. Math.},
volume={103},
pages={141\ndash 175},
url={https://doi-org/10.1007/BF02762272},
review={\MR{1613564}},
}

\bib{CasKal1999}{article}{
author={Casazza, Peter~G.},
author={Kalton, Nigel~J.},
title={Uniqueness of unconditional bases in {$c_0$}-products},
date={1999},
ISSN={0039-3223},
journal={Studia Math.},
volume={133},
number={3},
pages={275\ndash 294},
review={\MR{1687211}},
}

\bib{CasShu1989}{book}{
author={Casazza, Peter~G.},
author={Shura, Thaddeus~J.},
title={Tsirelson's space},
series={Lecture Notes in Mathematics},
publisher={Springer-Verlag, Berlin},
date={1989},
volume={1363},
ISBN={3-540-50678-0},
url={https://doi-org/10.1007/BFb0085267},
note={With an appendix by J. Baker, O. Slotterbeck and R. Aron},
review={\MR{981801}},
}

\bib{EdWo1976}{article}{
author={Edelstein, I.~S.},
author={Wojtaszczyk, Przemys{\l}aw},
title={On projections and unconditional bases in direct sums of {B}anach spaces},
date={1976},
ISSN={0039-3223},
journal={Studia Math.},
volume={56},
number={3},
pages={263\ndash 276},
url={https://doi-org/10.4064/sm-56-3-263-276},
review={\MR{425585}},
}

\bib{Gowers1994hyp}{article}{
author={Gowers, William~T.},
title={A solution to {B}anach's hyperplane problem},
date={1994},
ISSN={0024-6093,1469-2120},
journal={Bull. London Math. Soc.},
volume={26},
number={6},
pages={523\ndash 530},
url={https://doi.org/10.1112/blms/26.6.523},
review={\MR{1315601}},
}

\bib{Gowers1994}{article}{
author={Gowers, William~T.},
title={A solution to the {S}chroeder-{B}ernstein problem for {B}anach spaces},
date={1996},
ISSN={0024-6093,1469-2120},
journal={Bull. London Math. Soc.},
volume={28},
number={3},
pages={297\ndash 304},
url={https://doi.org/10.1112/blms/28.3.297},
review={\MR{1374409}},
}

\bib{GowersMaurey1997}{article}{
author={Gowers, William~T.},
author={Maurey, Bernard},
title={{B}anach spaces with small spaces of operators},
date={1997},
ISSN={0025-5831},
journal={Math. Ann.},
volume={307},
number={4},
pages={543\ndash 568},
url={https://doi-org/10.1007/s002080050050},
review={\MR{1464131}},
}

\bib{James1950}{article}{
author={James, Robert~C.},
title={Bases and reflexivity of {B}anach spaces},
date={1950},
ISSN={0003-486X},
journal={Ann. of Math. (2)},
volume={52},
pages={518\ndash 527},
url={https://doi-org/10.2307/1969430},
review={\MR{39915}},
}

\bib{James1964}{article}{
author={James, Robert~C.},
title={Uniformly non-square {B}anach spaces},
date={1964},
ISSN={0003-486X},
journal={Ann. of Math. (2)},
volume={80},
pages={542\ndash 550},
url={https://doi-org/10.2307/1970663},
review={\MR{173932}},
}

\bib{Kalton1984b}{article}{
author={Kalton, Nigel~J.},
title={Convexity conditions for nonlocally convex lattices},
date={1984},
ISSN={0017-0895},
journal={Glasgow Math. J.},
volume={25},
number={2},
pages={141\ndash 152},
url={https://doi-org/10.1017/S0017089500005553},
review={\MR{752808}},
}

\bib{Kato1958}{article}{
author={Kato, T.},
title={Perturbation theory for nullity, deficiency and other quantities of linear operators},
date={1958},
ISSN={0021-7670},
journal={J. Analyse Math.},
volume={6},
pages={261\ndash 322},
url={https://doi-org/10.1007/BF02790238},
review={\MR{107819}},
}

\bib{KotheToeplitz1934}{article}{
author={K{\"o}the, Gottfried},
author={Toeplitz, Otto},
title={Lineare {R}{\"a}ume mit unendlich vielen {K}oordinaten und
{R}inge unendlicher {M}atrizen},
date={1934},
ISSN={0075-4102},
journal={J. Reine Angew. Math.},
volume={171},
pages={193\ndash 226},
url={https://doi-org/10.1515/crll.1934.171.193},
review={\MR{1581429}},
}

\bib{LinPel1968}{article}{
author={Lindenstrauss, Joram},
author={Pe{\l}czy\'{n}ski, Aleksander},
title={Absolutely summing operators in {$L_{p}$}-spaces and their applications},
date={1968},
ISSN={0039-3223},
journal={Studia Math.},
volume={29},
pages={275\ndash 326},
url={https://doi-org/10.4064/sm-29-3-275-326},
review={\MR{0231188}},
}

\bib{LinTza1979}{book}{
author={Lindenstrauss, Joram},
author={Tzafriri, Lior},
title={Classical {B}anach spaces. {II} -- function spaces},
series={Ergebnisse der Mathematik und ihrer Grenzgebiete [Results in Mathematics and Related Areas]}, publisher={Springer-Verlag, Berlin-New York},
date={1979},
volume={97},
ISBN={3-540-08888-1},
review={\MR{540367}},
}

\bib{LinZip1969}{article}{
author={Lindenstrauss, Joram},
author={Zippin, Mordecay},
title={Banach spaces with a unique unconditional basis},
date={1969},
journal={J. Functional Analysis},
volume={3},
pages={115\ndash 125},
url={https://doi-org/10.1016/0022-1236(69)90054-8},
review={\MR{0236668}},
}

\bib{Pel1960}{article}{
author={Pe{\l}czy\'{n}ski, Aleksander},
title={Projections in certain {B}anach spaces},
date={1960},
ISSN={0039-3223},
journal={Studia Math.},
volume={19},
pages={209\ndash 228},
url={https://doi-org/10.4064/sm-19-2-209-228},
review={\MR{126145}},
}

\bib{Schep1992}{incollection}{
author={Schep, Anton~R.},
title={Krivine's theorem and the indices of a {B}anach lattice},
date={1992},
volume={27},
pages={111\ndash 121},
url={https://doi.org/10.1007/BF00046642},
note={Positive operators and semigroups on Banach lattices (Cura\c cao,
1990)},
review={\MR{1184883}},
}

\bib{Schlumprecht1991}{article}{
author={Schlumprecht, Thomas},
title={An arbitrarily distortable {B}anach space},
date={1991},
ISSN={0021-2172,1565-8511},
journal={Israel J. Math.},
volume={76},
number={1-2},
pages={81\ndash 95},
url={https://doi.org/10.1007/BF02782845},
review={\MR{1177333}},
}

\bib{Singer1981}{book}{
author={Singer, Ivan},
title={Bases in {B}anach spaces. {II}},
publisher={Editura Academiei Republicii Socialiste Rom\^{a}nia, Bucharest; Springer-Verlag, Berlin-New York},
date={1981},
ISBN={3-540-10394-5},
review={\MR{610799}},
}

\bib{Tsirelson1974}{article}{
author={Tsirelson, Boris~S.},
title={It is impossible to embed {$\ell_{p}$} or {$c_{0}$} into an arbitrary {B}anach space},
date={1974},
ISSN={0374-1990},
journal={Funkcional. Anal. i Prilo\v{z}en.},
volume={8},
number={2},
pages={57\ndash 60},
review={\MR{0350378}},
}

\bib{Wojtowicz1988}{article}{
author={W\'{o}jtowicz, Marek},
title={On the permutative equivalence of unconditional bases in {$F$}-spaces},
date={1988},
ISSN={0208-6573},
journal={Funct. Approx. Comment. Math.},
volume={16},
pages={51\ndash 54},
review={\MR{965366}},
}
\end{biblist}
\end{bibdiv}
\end{document}